\numberwithin{equation}{section}
\newtheorem{theorem}{Theorem}[section]
\newtheorem{proposition}[theorem]{Proposition}
\newtheorem{lemma}[theorem]{Lemma}
\newtheorem{corollary}[theorem]{Corollary}
\newtheorem{definition}[theorem]{Definition}
\newtheorem{example}[theorem]{Example}
\newtheorem*{theorem*}{Theorem}
\newcommand{\abs}[1]{\ensuremath{\left\lvert#1\right\rvert}}
\newcommand{\br}[1]{\ensuremath{\left(#1\right)}}
\newcommand{\seqn}[2][k]{\ensuremath{\br{{#2}_{#1}}_{#1\in\N}}}
\newcommand{\set}[1]{\ensuremath{\left\{#1\right\}}}
\newcommand{\sett}[2]{\ensuremath{\left\{#1\,\middle|\,#2\right\}}}
\newcommand{\norm}[2][]{\left \lVert #2 \right \rVert_{#1}}
\DeclareMathOperator{\TC}{TC}
\DeclareMathOperator{\BiLip}{BiLip}
\DeclareMathOperator{\spann}{span}
\newcommand{\TL}{\mathcal{T}}
\newcommand\Id{{{\rm Id}}}
 \renewcommand\S{{\mathbb S}}
\newcommand\CK{{\mathscr{C}(\mathcal{K})}}
\newcommand{\Fo}{\,\,\,\text{for }\,\,}
\newcommand{\Foa}{\,\,\,\text{for all }\,\,}
\newcommand{\AND}{\,\,\,\text{and }\,\,}
\newcommand{\eps}{\ensuremath{\varepsilon}}
\newcommand{\F}[1][\eps]{F}%
\newcommand{\g}{\ensuremath{\gamma}}
\newcommand{\length}{\mathscr{L}}
\newcommand{\N}{\ensuremath{\mathbb{N}}}
\newcommand{\R}{\ensuremath{\mathbb{R}}}
\renewcommand{\rho}{\ensuremath{\varrho}}
\DeclareMathOperator{\sign}{sign}
\newcommand{\TP}{\mathrm{TP}}
\newcommand{\tpc}[1][\varphi]{\mathrm{tpc}_{#1}}
\newcommand{\vth}{\ensuremath{\vartheta}}
\newcommand{\Z}{\ensuremath{\mathbb{Z}}}
\title{Symmetric elastic knots}
\author{Alexandra Gilsbach}
\address[A.~Gilsbach]{
\newline%
Tokyo Institute of Technology \newline%
Department of Mathematics, School of Science, \newline%
2-12-1 Ookayama, Meguro-ku, Tokyo 152-8551, Japan}%
\email{gilsbach.a.aa@m.titech.ac.jp}%
\author{Philipp Reiter}
\address[Ph.~Reiter]{
\newline%
Chemnitz University of Technology,\newline%
Faculty of Mathematics,\newline%
09107 Chemnitz, Germany}
\email{reiter@math.tu-chemnitz.de}
\author{Heiko von der Mosel}
\address[H.~von~der~Mosel]{
\newline%
RWTH Aachen University,\newline%
Institut f\"ur Mathematik,\newline%
Templergraben 55,
52062 Aachen,
Germany}
\email{heiko@instmath.rwth-aachen.de}
\keywords{Bending energy,  tangent-point energy,  elastic knots,  symmetric criticality}
\date{May 18, 2021}%
\DeclareRobustCommand{\SkipTocEntry}[5]{}
\begin{document}

\maketitle

\begin{abstract}
 Minimizing the bending energy within knot classes
 leads to the concept of elastic knots which has been
 initiated in~\cite{vdm_1998}.
 Motivated by numerical experiments in~\cite{bartels-reiter_2018}
 we prescribe dihedral symmetry and establish existence
 of dihedrally symmetric elastic knots for
 knot classes admitting this type of symmetry. 
 Among other results we prove that the dihedral elastic
 trefoil is the union of two circles that form a (planar) figure-eight. 
 We also discuss some generalizations and limitations regarding other symmetries and knot classes.
\end{abstract}

\setcounter{tocdepth}1
\tableofcontents

\section{Introduction}\label{sec:intro}
The study of elastic knots was initiated by Gerlach et al.\ in 
\cite{gerlach-etal_2017}. Inspired by toy models of springy
knotted wires (see the images in \cite[Figure~7]{gerlach-etal_2017})
the existence of energy minimizing knotted 
configurations $\g_\vth$ has been established in any 
prescribed 
tame\footnote{\label{foot:tame} A knot class is {called} \emph{tame} if it contains a
polygonal representative \cite[Definition 1.3]{burde-zieschang_2003},
or equivalently, if and only if it contains a continuously differentiable
representative \cite[App.~I]{crowell-fox_1977}.}
knot class $\mathcal{K}$~\cite[Theorem 2.1]{gerlach-etal_2017}.
The total energy considered,
\begin{equation}\label{eq:total-energy}
E_\vth:= E+\vth \mathcal{R},\quad\Fo \vth >0,
\end{equation}
consists of the classic Euler--Bernoulli bending energy
\begin{equation}\label{eq:bending-energy}
E(\g):=\int_\g\kappa^2\,ds
\end{equation}
as the leading order term, together with a small multiple
of a repulsive potential $\mathcal{R}$ to 
avoid self-intersections. In order to analyse
the approximative shape of the minimizing knots $\g_\vth$ for 
small $\vth$ the authors study
the limit
$\vth\to 0$. {It is shown that the  minimizers $\g_\vth$ converge 
in $C^1$ to closed curves $\g_0$ that}
minimize the bending energy
\begin{equation*}%
E(\g_0)\le E(\beta)\quad\Foa \beta\in\CK,
\end{equation*}
where 
\begin{equation}\label{eq:admissible-curves}
\mathscr{C}(\mathcal{K}):=\{\g\in W^{2,2}(\R/\Z,\R^3):\mathscr{L}(\g)=1,\,
\abs{\g'}>0,\,
[\g]=\mathcal{K}\}.
\end{equation}
Here, $\R/\Z$ denotes the periodic interval of unit length.
These limiting curves $\g_0$ are called \emph{elastic knots 
for $\mathcal{K}$} according to \cite[Definition 2.3]{gerlach-etal_2017}, 
although they are not embedded unless $\mathcal{K}$ is
trivial; see \cite[Proposition 3.1]{gerlach-etal_2017}. 
One of the central results is the complete 
classification of elastic knots for all
torus knot classes $\TL(2,b)$ for odd $b\in\Z\setminus\{1,-1\}$.
\begin{theorem*}[{\cite[Corollary 6.5(i)]{gerlach-etal_2017}}]
The elastic torus knot $\g_0$ for $\TL(2,b)$ for any odd $b\in\Z\setminus
\{1,-1\}$ is the doubly covered circle.
\end{theorem*}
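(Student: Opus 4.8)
The plan is to compute the least bending energy available in the knot class $\TL(2,b)$, to show that this value is realised in the limit by the doubly covered circle, and finally to identify $\g_0$ with that configuration. Since $b\neq\pm1$, the class $\TL(2,b)$ is nontrivial, so every $\beta\in\CK$ with $\mathcal K=\TL(2,b)$ is a genuine knot. First I would invoke the F\'ary--Milnor theorem, which yields the total-curvature bound $\int_0^1\kappa\,ds\ge 4\pi$ for each such $\beta$ (consistent with the bridge number of $\TL(2,b)$ being $2$). Combined with $\Length(\beta)=1$ and the Cauchy--Schwarz inequality, $E(\beta)=\int_0^1\kappa^2\,ds\ge\br{\int_0^1\kappa\,ds}^2\ge 16\pi^2$, with equality in Cauchy--Schwarz exactly when $\kappa$ is constant. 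Hence $\inf_{\CK}E\ge 16\pi^2$.

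Next I would show that this bound is sharp and is realised by the doubly covered circle. For this I would exhibit a recovery sequence: take the standard $(2,b)$ torus knot drawn on a torus of tube radius $\rho$ around a fixed round circle of length $\tfrac12$ traversed twice, reparametrised to unit length. As $\rho\to 0$ these are admissible representatives of $\TL(2,b)$ converging in $C^1$ to the circle covered twice, and a direct estimate shows their bending energy tends to $16\pi^2$. Together with the lower bound this gives $\inf_{\CK}E=16\pi^2$; invoking the convergence result quoted above, the elastic knot satisfies $E(\g_0)=16\pi^2$ and the doubly covered circle is a minimiser.

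It remains to identify $\g_0$ itself. The natural route is the rigidity in the lower bound: \emph{if} one can show that the total curvature of $\g_0$ equals $4\pi$, then equality in Cauchy--Schwarz forces $\abs{\kappa}\equiv 4\pi$, so $\g_0'$ traverses the unit sphere at constant speed $4\pi$; and \emph{if} in addition $\g_0$ is planar with monotonically winding tangent (turning number $\pm2$), then $\g_0$ is the circle of radius $1/(4\pi)$ covered twice, as claimed.

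The hard part, and in my view the only genuinely delicate point, is securing these three facts, none of which follows from energy minimality alone. Total curvature is lower semicontinuous in the unfavourable direction under $C^1$ convergence, so the identity $\int_0^1\kappa\,ds=4\pi$ cannot simply be inherited from the knotted approximants; one must control the tangent indicatrix of $\g_0$ directly. More seriously, the minimal value $16\pi^2$ is \emph{not} characteristic of the doubly covered circle: the planar figure-eight obtained by joining two circles of curvature $4\pi$ also has $\abs{\kappa}\equiv4\pi$, total absolute curvature $4\pi$, and hence the same energy, while its turning number is $0$ rather than $\pm2$. Energy minimality therefore cannot by itself single out the doubly covered circle, and the identification must use finer information about the minimisers $\g_\vth$ -- for instance that their tangent winds monotonically, so that the limit is a \emph{convex} rather than a balanced configuration. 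This selection among equal-energy minimisers is precisely the phenomenon the present paper exploits: under dihedral symmetry it is the competing figure-eight that is selected, which is consistent with, and indeed clarifies, why the unconstrained elastic $\TL(2,b)$ knot is the doubly covered circle.
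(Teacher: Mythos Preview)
This theorem is not proved in the present paper: it is stated in the introduction as a quoted result from \cite[Corollary~6.5(i)]{gerlach-etal_2017}, so there is no proof here to compare against. Your outline is therefore being measured against a result the authors import rather than establish.

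That said, your sketch is accurate as far as it goes, and you are right to flag the identification step as the genuine obstacle. The present paper does display some of the machinery that \cite{gerlach-etal_2017} uses for this step, and it confirms your diagnosis. First, the lower bound $\TC(\g_0)\ge 4\pi$ for the non-embedded limit requires the \emph{extended} F\'ary--Milnor theorem for the $C^1$-closure of the knot class (restated here as Theorem~\ref{thm:fary-milnor-extension}); ordinary F\'ary--Milnor applies only to embedded curves, so your worry about inheriting the bound under $C^1$ convergence is exactly the point this extension addresses. Second, the classification of arclength-parametrized closed curves with $\kappa\equiv 4\pi$ and at least one double point is carried out in \cite[Corollary~3.4]{gerlach-etal_2017}: up to isometry and reparametrization they form the one-parameter family $\tpc[\varphi]$, $\varphi\in[0,\pi]$, with $\tpc[0]$ the doubly covered circle and $\tpc[\pi]$ the planar figure-eight you describe. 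Third, the selection of $\tpc[0]$ in the unconstrained problem is done in \cite{gerlach-etal_2017} by a separate topological argument (cf.\ \cite[Proposition~4.2]{gerlach-etal_2017}, which the present paper also invokes in the proof of Theorem~\ref{thm:mainthm}(ii)), not by energy alone---precisely as you suspected. So your proposal identifies the right skeleton and the right gap; filling it requires the extended F\'ary--Milnor bound, the $\tpc[\varphi]$ classification, and a selection argument, all of which live in \cite{gerlach-etal_2017} rather than here.
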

This result is confirmed by mechanical experiments with thin elastic
knotted
wires, as well as by various numerical simulations performed by
different groups of researchers as documented
in  \cite[Introduction p.~94]{gerlach-etal_2017}.
Also the more recent work on the corresponding
numerical gradient flow of Bartels et al.~\cite{bartels-etal_2018,bartels-reiter_2018} provides strong numerical evidence for
the doubly covered circle as the only possible elastic knot
for $\TL(2,b)$; see the left of Figure~\ref{fig:bartels-reiter}.

\subsection*{Symmetric configurations}
{Sometimes, however, this numerical gradient flow produces
a different limiting configuration exhibiting a dihedral symmetry
as depicted on the right of Figure~\ref{fig:bartels-reiter}.
This indicates the presence of a dihedrally symmetric critical point\footnote{This symmetric knot might be a saddle point, since there exist -- as reported 
in~\cite{bartels-reiter_2018} -- symmetry breaking perturbations with smaller
energy. A more systematic numerical investigation is under way to produce
more evidence of the nature of this critical point.}
of the total energy $E_\vth$.     }

\begin{figure}[!t]
\begin{center}
\includegraphics[scale=.5,trim=40 130 40 100,clip]{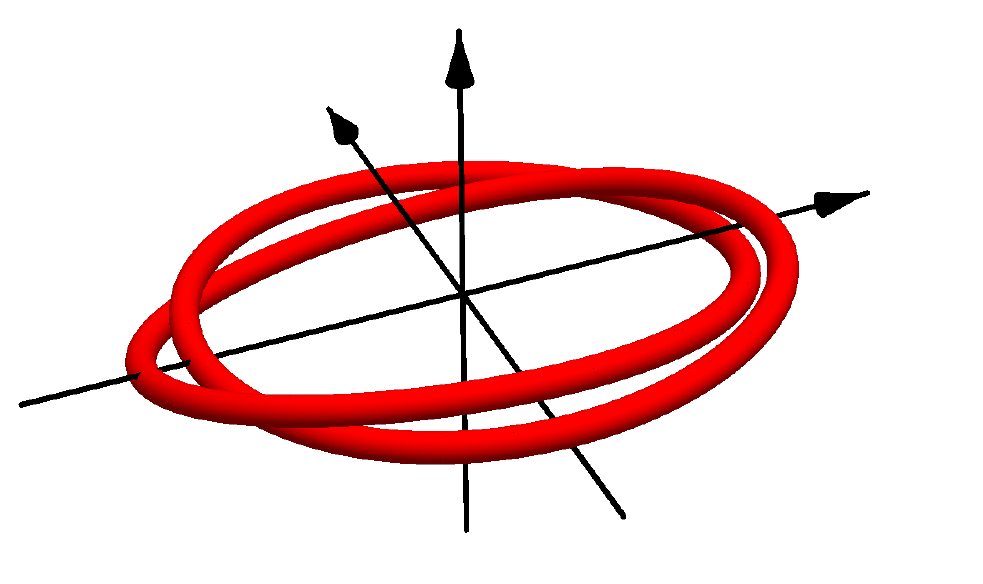}
\includegraphics[scale=.6,trim=20 70 30 50,clip]{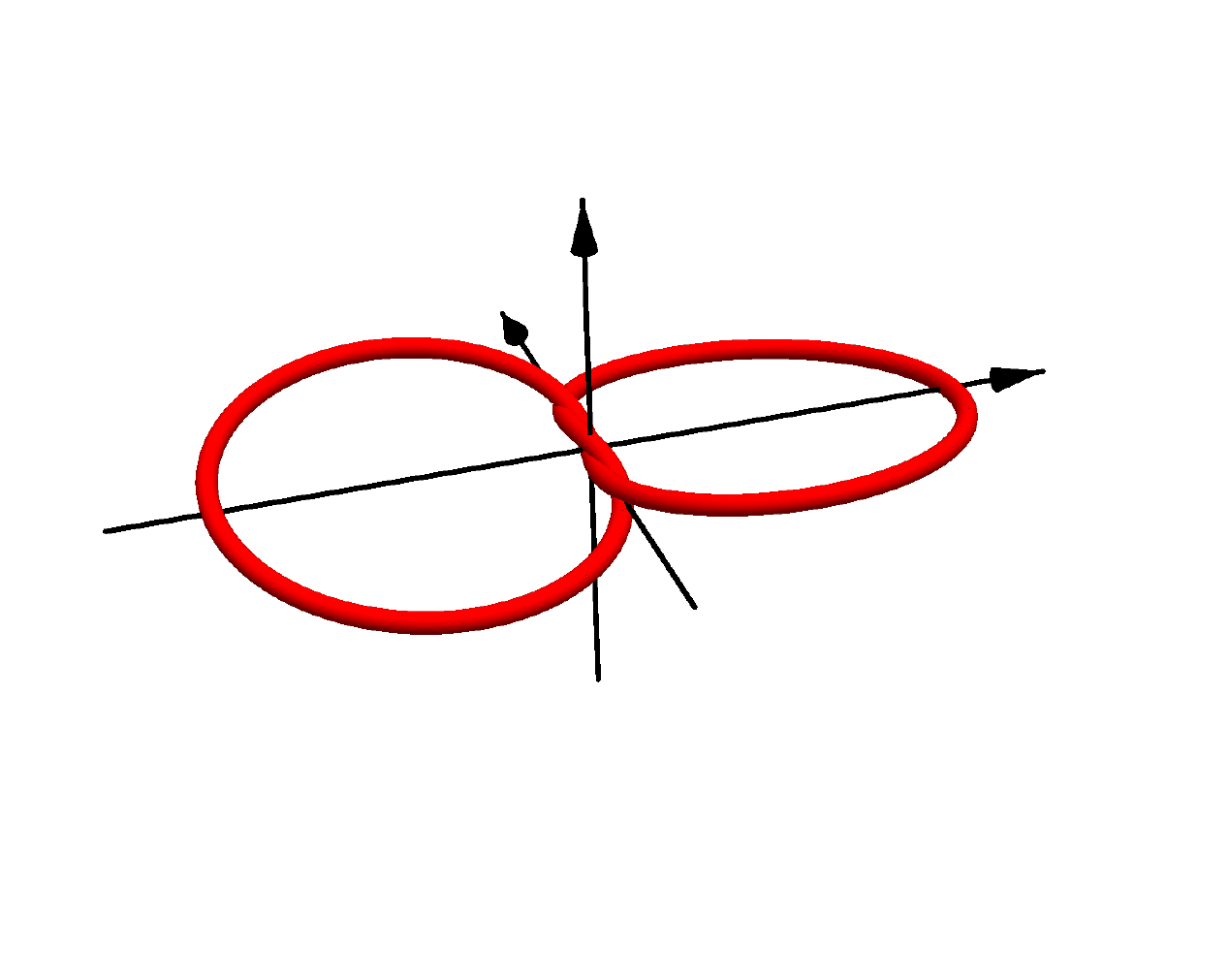}\\
\includegraphics[scale=.25,trim=0 0 -50 0,clip
]{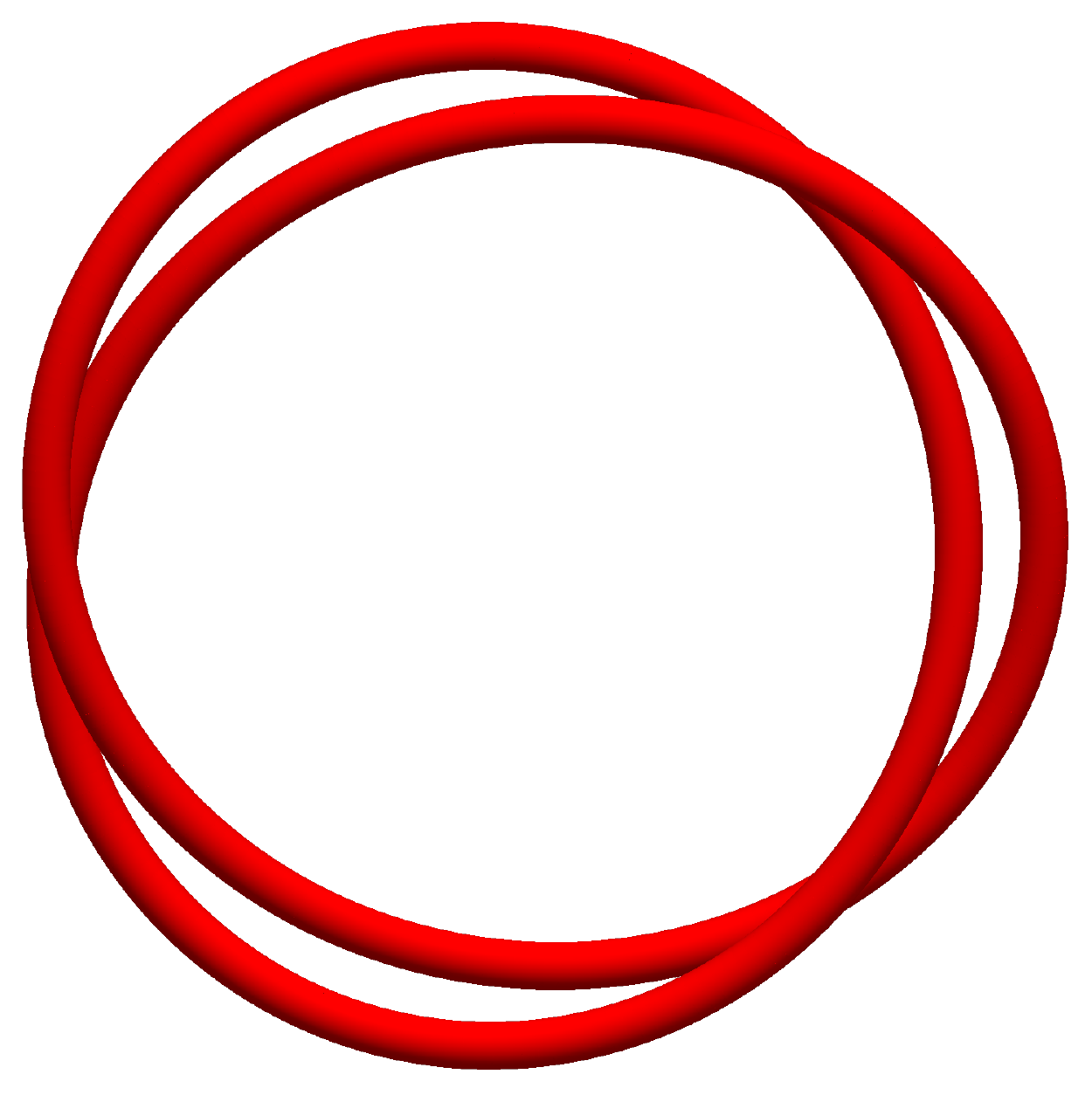}\qquad
\includegraphics[scale=.40,trim=-50 0 0 0,clip
]{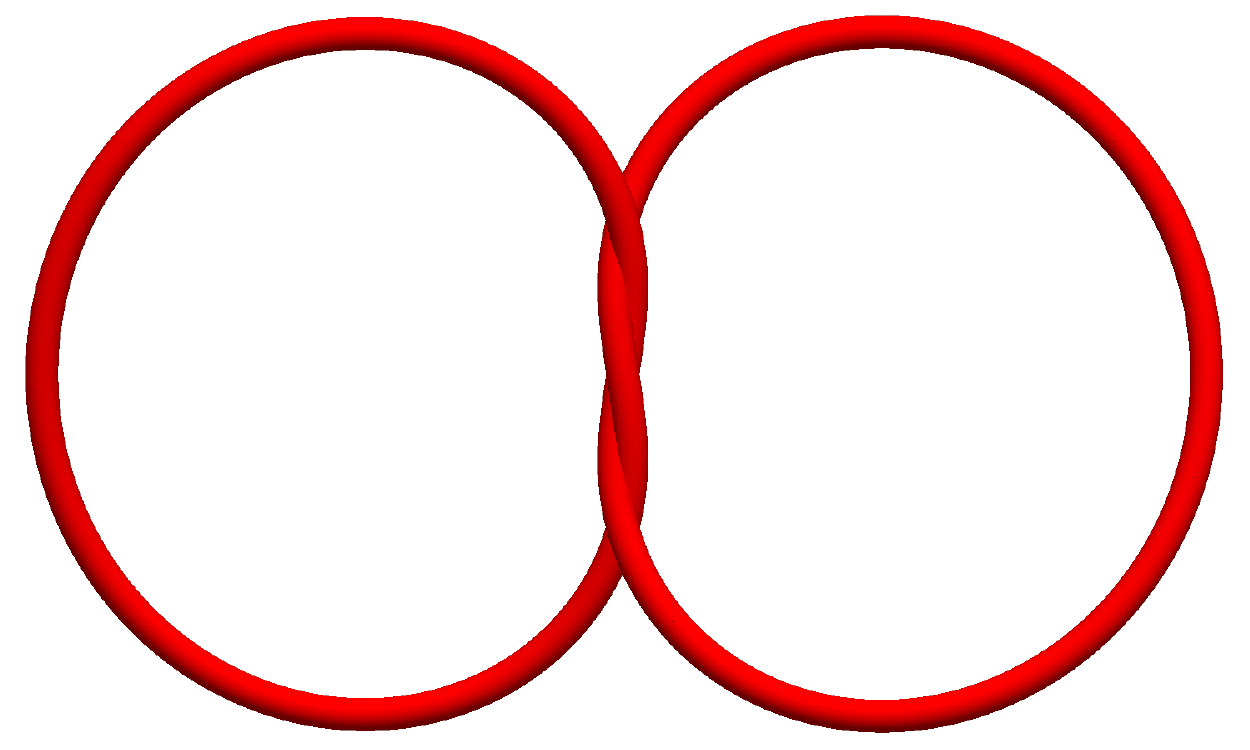}
\end{center}
\caption{%
Final states of the numerical gradient flow of Bartels et al.\@ \cite{bartels-etal_2018} for {the total energy} $E_\vth$ {for small $\vth >0$}
 in the case of the trefoil knot class. Left: The doubly covered circle is the usual limit
configuration as predicted by \cite{gerlach-etal_2017}.
Right: There are a few initial configurations that move towards 
an almost flat torus knot with dihedral symmetry under the gradient flow. %
\label{fig:bartels-reiter}}
\end{figure}

It is the purpose of the present paper to analytically support these
infrequent but reproducible
numerical observations. Namely, we use the principle of symmetric criticality
of  Palais \cite{palais_1979} to prove the existence of symmetric
critical points for the constrained variational problem
\begin{equation}\label{eq:P_theta}\tag{P${}_\vartheta$}
\textnormal{Minimize the total energy $E_\vartheta$ on the set $\mathscr{C}
(\mathcal{K})$.}
\end{equation}
Palais's principle, however, requires energy functionals of class $C^1$. To meet this precondition
(and to avoid the technical issues {connected}
with an alternative nonsmooth variant~\cite{cantarella-etal_2014a} 
of this principle)
we replace the nonsmooth ropelength functional
used in~\cite{gerlach-etal_2017} by 
a suitable power of the \emph{tangent-point energy} $\TP_q$.
It is a self-avoiding energy defined
on absolutely continuous regular closed curves $\g:\R/\Z\to\R$ as
\begin{equation}\label{eq:tan-point}
\mathcal{R}(\g):=\TP_q^{\frac1{q-2}}(\g):=\left(\int_{\R/\Z}\int_{\R/\Z}
\frac{1}{r^q_\textnormal{tp}(\g(s),\g(t))}\abs{\g'(s)}\abs{\g'(t)}\,dsdt
\right)^{\frac1{q-2}},
\end{equation}
where we restrict  to exponents  $q\in (2,4]$.
Here, $r_\textnormal{tp}(\g(s),\g(t))$ stands for the radius of the unique circle
through the points $\g(s)$ and $\g(t)$ that is tangent to $\g$ at $\g(s)$.
This energy was first suggested by
Buck and Orloff~\cite{buck-orloff} in the case $q=2$
and for general $q>2$ by Gonzalez and Maddocks in
\cite[p. 4773]{gonzalez-maddocks_1999}, and it was investigated analytically in detail
in~\cite{strzelecki-vdm_2012},
\cite{blatt_2013b}, and~\cite{blatt-reiter_2015a}.
Note that the Sobolev space $W^{2,2}$
continuously embeds into the fractional Sobolev space
$W^{2-(1/q),q}$ for $q\in (2,4]$,
which provides the exact  regularity framework to guarantee a finite 
and  continuously differentiable tangent point energy 
\cite[Remark 3.1]{blatt-reiter_2015a},
\cite{wings_2018}, so that the
total energy $E_\vth$  is continuously differentiable
on the open subset $W^{2,2}_\textnormal{ir}(\R/\Z,\R^3)$ of
\underline injective %
\underline regular closed curves of class $W^{2,2}$.  Consequently,  Palais's principle of symmetric
criticality is 
applicable. 
Furthermore, a suitably discretized version  of $\TP_q$ for $q\in (2,4]$
was
used for $\mathcal{R}$ in 
the numerical gradient flow in \cite{bartels-etal_2018} and
\cite{bartels-reiter_2018}.

\subsection*{Existence results}

\begin{theorem}[Existence of symmetric critical knots]
\label{thm:symmetric-critical}
Given a knot class~$\mathcal{K}$, assume that there is at least one knot 
with dihedral symmetry contained in $\CK$.
Then for every $\vartheta>0$ there
exists an arclength parametrized 
knot $\Gamma_\vartheta$ of knot type $\mathcal{K}$
with dihedral symmetry
that is critical for the constrained minimization
problem \eqref{eq:P_theta}.
More precisely, we have 
\begin{equation}\label{eq:intro-ELG}
 DE_\vartheta(\Gamma_\vartheta)h+\lambda D\mathscr{L}(\Gamma_\vartheta)h=0\quad\Foa h\in W^{2,2}(\R/\Z,\R^3),
\end{equation}
where $\lambda:=E_\vartheta(\Gamma_\vartheta)$.
\end{theorem}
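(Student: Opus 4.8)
The plan is to deduce the existence of the symmetric critical knot from a constrained minimization over symmetric competitors combined with Palais's principle of symmetric criticality. Let $G$ denote the dihedral group realized by the corresponding rotations and reflections of $\R^3$; composing each such isometry with an appropriate rigid motion of the parameter circle $\R/\Z$ yields a linear isometric action of $G$ on $W^{2,2}(\R/\Z,\R^3)$ under which the bending energy $E$, the self-avoidance term $\mathcal{R}$ and the length $\mathscr{L}$ are all invariant. The fixed-point set $\CK^G\subset\CK$ of $G$-symmetric admissible curves is a relatively open subset of a closed linear subspace, and it is nonempty by the hypothesis of the theorem.

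First I would produce a minimizer of $E_\vartheta$ over $\CK^G$ by the direct method. After normalizing each competitor to its arclength parametrization (which leaves $E$, $\mathcal{R}$, $\mathscr{L}$, the knot type and the symmetry unchanged), one has $|\gamma'|\equiv 1$ and $E(\gamma)=\|\gamma''\|_{L^2}^2$, so a bound on $E_\vartheta$ yields a uniform $W^{2,2}$-bound along any minimizing sequence (fixing, say, the barycenter to eliminate translations). Passing to a subsequence gives weak convergence in $W^{2,2}$ and, by the compact embedding $W^{2,2}\hookrightarrow C^1$, strong convergence in $C^1$; the limit $\Gamma_\vartheta$ again has unit speed, unit length and $G$-symmetry, and weak lower semicontinuity of the $L^2$-norm of $\gamma''$ together with the lower semicontinuity of $\mathcal{R}$ under $C^1$-convergence shows that $\Gamma_\vartheta$ realizes the infimum of $E_\vartheta$ on $\CK^G$.

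The crux --- and the step I expect to be the main obstacle --- is to guarantee that the $C^1$-limit $\Gamma_\vartheta$ still lies in the prescribed knot class $\mathcal{K}$ and remains injective. This is exactly where the repulsive term is indispensable: since $E_\vartheta$ is bounded along the minimizing sequence and $\vartheta>0$, the energies $\mathcal{R}$ and hence $\TP_q$ stay uniformly bounded, which provides a uniform positive lower bound on the tangent-point thickness of the competitors. A uniform tube of fixed radius both forbids self-intersections in the $C^1$-limit and forces the curves to be ambient isotopic to their limit, so the knot type $\mathcal{K}$ is preserved and $\Gamma_\vartheta\in\CK^G$. These compactness and knot-preservation facts are available from the analysis of the tangent-point energies in the cited literature.

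It remains to extract \eqref{eq:intro-ELG} and to identify $\lambda$. As a minimizer of $E_\vartheta$ subject to the single equality constraint $\mathscr{L}=1$ within the symmetric subspace --- the knot class being an open condition that holds on a full $C^1$-neighborhood of $\Gamma_\vartheta$ --- the Lagrange multiplier rule produces a number $\lambda$ with $DE_\vartheta(\Gamma_\vartheta)h+\lambda D\mathscr{L}(\Gamma_\vartheta)h=0$ for all \emph{symmetric} $h$. Because $E_\vartheta$ and $\mathscr{L}$ are $G$-invariant and of class $C^1$ on $W^{2,2}_{\textnormal{ir}}(\R/\Z,\R^3)$, Palais's principle of symmetric criticality upgrades this to all $h\in W^{2,2}(\R/\Z,\R^3)$, which is precisely \eqref{eq:intro-ELG}. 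Finally, to compute $\lambda$ I would test \eqref{eq:intro-ELG} with the admissible scaling direction $h=\Gamma_\vartheta$: both $E$ and $\mathcal{R}$ are homogeneous of degree $-1$ under the dilation $\gamma\mapsto c\gamma$, so Euler's relation gives $DE_\vartheta(\Gamma_\vartheta)[\Gamma_\vartheta]=-E_\vartheta(\Gamma_\vartheta)$, whereas $\mathscr{L}$ is homogeneous of degree $1$, whence $D\mathscr{L}(\Gamma_\vartheta)[\Gamma_\vartheta]=\mathscr{L}(\Gamma_\vartheta)=1$; substituting yields $\lambda=E_\vartheta(\Gamma_\vartheta)$.
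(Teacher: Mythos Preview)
Your argument is correct and follows the same overall strategy as the paper: existence via the direct method on the symmetric subclass, symmetric criticality via Palais, and identification of $\lambda$ via the $(-1)$-homogeneity of $E_\vartheta$. The one organizational difference is that the paper does not minimize $E_\vartheta$ under the length constraint and then invoke a Lagrange multiplier rule on the symmetric subspace. Instead it introduces the scale-invariant energy $S_\vartheta(\gamma):=\mathscr{L}(\gamma)\,E_\vartheta(\gamma)$, minimizes this \emph{unconstrained} functional over $\Sigma_\mathcal{K}=\Sigma^1\cap\Omega^1_\mathcal{K}$ (using scale invariance to normalize length), applies Palais directly to $S_\vartheta$, and then reads off \eqref{eq:intro-ELG} from $DS_\vartheta(\Gamma_\vartheta)=0$ by the product rule. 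Your route---Lagrange multiplier on the symmetric slice, then Palais applied to $E_\vartheta+\lambda\mathscr{L}$---is equivalent and equally valid; the paper's device simply sidesteps having to check a constraint qualification before Palais. Two small inaccuracies to clean up: the $D_2$-action on $\R^3$ here is by rotations (the $R_i\in SO(3)$), not reflections; and $\CK^G$ is not open in a linear subspace since $\CK$ carries the length constraint---what is open is $\Sigma_\mathcal{K}$ inside the fixed-point subspace, with $\mathscr{L}=1$ imposed as the codimension-one constraint you later treat via the multiplier. Also note that for $D_2$-symmetric curves the barycenter is automatically the origin (the unique common fixed point of the $R_i$), so no translation normalization is needed; the paper instead uses the symmetry directly to obtain the $L^\infty$-bound (Lemma~\ref{lem:improved-Linfty-bound}), and records separately (Lemma~\ref{lem:arclength-sym}) that arclength reparametrization preserves the $D_2$-symmetry, a fact you use without comment.
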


Here, the term \emph{dihedral symmetry}\footnote{{There
 are different definitions of a \emph{symmetry group} for knots in the literature,
 cf.~\cite{adams_2004,burde-zieschang_2003,gruenbaum-shephard_1985} and references therein.
 In the present
 paper we consider Euclidean symmetries
 of an actual {space} curve.}} or, synonymously, \emph{$D_2$-symmetry}
refers to the action of the classic dihedral
group
$D_2$
(see Table~\ref{tab:dihedral-table}) on parametrized space curves  by rotating the curve's 
image by an angle of $\pi$ about any of the three coordinate axes combined
with an appropriate (dihedral) transformation of the periodic 
domain $\R/\Z$ of the curve. 
All this is made precise in Section 
\ref{sec:group}; see the examples in Figures~\ref{fig:tpc_pi} 
 and~\ref{fig:constrB}
for a preliminary impression of $D_2$-symmetric curves. In particular,
Figure~\ref{fig:constrB} depicts a dihedrally
symmetric torus knot of class $\TL(2,5)$ constructed in
Example~\ref{ex:D2-symmetric-torus-knot} with  a method which works for any
odd $b\in\Z\setminus\{1,-1\}$, so that the torus knot classes
$\TL(2,b)$ satisfy the hypothesis of Theorem~\ref{thm:symmetric-critical}.
\begin{corollary}[Existence of symmetric critical torus knots]
\label{cor:symmetric-critical}
Let $b\in\Z\setminus\{-1,1\}$ be odd. Then for
every $\vartheta>0$ there exists an arclength parametrized torus knot
$\Gamma_\vartheta
\in\mathscr{C}(\mathcal{T}(2,b))$ with
dihedral symmetry, which is critical for the constrained minimization
problem \eqref{eq:P_theta} for $\mathcal{K}=\TL(2,b)$.
\end{corollary}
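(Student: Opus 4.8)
The plan is to deduce the corollary directly from Theorem~\ref{thm:symmetric-critical}. That theorem already produces a dihedrally symmetric critical knot for~\eqref{eq:P_theta} for \emph{any} knot class $\mathcal{K}$ whose admissible class $\CK$ contains at least one $D_2$-symmetric representative, so the entire task reduces to verifying this single hypothesis for $\mathcal{K}=\TL(2,b)$ with $b$ odd. I would therefore spend no effort on criticality itself and instead exhibit one explicit $D_2$-symmetric curve in $\mathscr{C}(\TL(2,b))$.

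For the construction I would take the usual torus-knot parametrization, but with the winding numbers arranged so that a half-period shift of the domain realizes a coordinate rotation. Concretely, with fixed $R>r>0$, set
\[
\g(s)=\br{(R+r\cos(4\pi s))\cos(2\pi b s),\ (R+r\cos(4\pi s))\sin(2\pi b s),\ r\sin(4\pi s)},\qquad s\in\R/\Z,
\]
so that $\g$ lies on the standard torus of revolution, winding $b$ times in the longitude and twice in the meridian (this is the curve of Example~\ref{ex:D2-symmetric-torus-knot}; see Figure~\ref{fig:constrB}). I would then check the three generating symmetry relations: rotation by $\pi$ about the $z$-axis is matched by the shift $s\mapsto s+\tfrac12$, rotation by $\pi$ about the $x$-axis by the reflection $s\mapsto -s$, and the remaining rotation about the $y$-axis by their composite $s\mapsto\tfrac12-s$. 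All three reduce to the elementary identities $\cos(\theta+\pi b)=-\cos\theta$ and $\sin(\theta+\pi b)=-\sin\theta$, valid precisely because $b$ is odd.

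It then remains to confirm that a suitable rescaling of $\g$ belongs to $\mathscr{C}(\TL(2,b))$. Smoothness gives $\g\in W^{2,2}$; since the longitude and meridian angles have nonvanishing constant derivatives, the tangent is a nontrivial combination of two independent torus directions, whence $\abs{\g'}>0$ for $R>r>0$; and because $\g$ sits on the standard torus with $\gcd(2,b)=1$ it is embedded and of knot type $\TL(2,b)$. Finally I would rescale $\g$ by $1/\mathscr{L}(\g)$ to achieve unit length, noting that scaling commutes with the rotations and leaves the domain maps---hence the $D_2$-symmetry, the regularity, and the knot type---untouched. With the hypothesis verified, Theorem~\ref{thm:symmetric-critical} yields the asserted symmetric critical torus knot for every $\vartheta>0$.

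The main obstacle here is the explicit construction rather than anything analytic: both the symmetry and the very fact that $\g$ is a knot hinge on the parity of $b$. The half-period shift produces the correct sign only through $\cos(\theta+\pi b)=-\cos\theta$, which fails for even $b$, and connectedness of $\g$ (a single knot rather than a two-component link) requires $\gcd(2,b)=1$; both force $b$ odd. The care needed is to make these sign computations and the identification of the knot type uniform in $b$, so that a single family of curves settles all admissible $b$ at once.
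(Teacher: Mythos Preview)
Your overall strategy---reduce to Theorem~\ref{thm:symmetric-critical} by exhibiting a single $D_2$-symmetric representative---is exactly the paper's, which simply cites Example~\ref{ex:D2-symmetric-torus-knot} for that purpose. But your explicit curve does \emph{not} lie in $\Sigma^1$ as the paper defines it. Definition~\ref{def:dihedral} is rigid: it pairs the rotation $R_i$ with the specific domain map $\psi_i^\ell$ of~\eqref{eq:inner-action}, whereas your verification pairs the half-period shift (which is $\psi_2^\ell$) with $R_3$, the reflection $s\mapsto-s$ (which is $\psi_3^\ell$) with $R_1$, and $s\mapsto\tfrac12-s$ (which is $\psi_1^\ell$) with $R_2$---a cyclic mismatch. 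Concretely, your curve has $\g(0)=(R+r,0,0)\in\R\mathbf{e_1}$, but Proposition~\ref{prop:character-symmetry}\,(i) forces $\g(0)\in\R\mathbf{e_3}$ for every member of $\Sigma^1$. The repair is immediate: replace $\g$ by $P\g$ with $P(x,y,z)=(y,z,x)$; then the pairings line up and the curve is $D_2$-symmetric in the required sense.

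A second, minor point: your curve is \emph{not} the one in Example~\ref{ex:D2-symmetric-torus-knot}. That example glues a helical arc~\eqref{eq:helical-part} to a stadium piece~\eqref{eq:stadium} via the construction~\eqref{eq:glueing}, rather than using the standard torus parametrization. Your construction (once the coordinates are permuted) is simpler and entirely adequate for the present corollary; the paper's more elaborate family is chosen because it also converges in $W^{2,2}$ to $\tpc[\pi]$ as $\epsilon\to0$ (Lemma~\ref{lem:convergence}), which is the key input for Theorem~\ref{thm:mainthm}.
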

Apart from the $D_2$-symmetry these existence results contain no
information about the actual shape of the critical knots $\Gamma_\vth$. 
In fact, there is a large variety of possible shapes of dihedrally symmetric curves.
In Lemma
\ref{lem:glueing} we provide a general mechanism how to construct
space curves with dihedral symmetry from just one arc satisfying rather mild
conditions on its endpoints.  
To obtain more specific
information on the shape of the symmetric critical points $\Gamma_\vth$
obtained in 
Theorem~\ref{thm:symmetric-critical} and 
Corollary~\ref{cor:symmetric-critical} it seems hard to exploit the
variational equation~\eqref{eq:intro-ELG} because of the complicated
differential
$D\TP_q$ of the non-local tangent-point energy $\TP_q$ as part of
the total energy $E_\vth$.  Following the
idea in \cite{gerlach-etal_2017} we study 
the limit $\vth\to 0$ instead, to
obtain limit configurations  whose shape {can} then be analyzed more easily to
yield the approximative shape of the $D_2$-symmetric critical knots $\Gamma_\vth$
for small $\vth>0$.
\begin{theorem}[Existence of symmetric elastic knots]
\label{thm:symmetric-elastic-knots}
Let  $\mathcal{K}$ be a fixed knot class that contains
a $D_2$-symmetric representative in $\CK$, and consider a 
sequence $\vartheta_j\to 0$ and the corresponding
$\textnormal{(P${}_{\vth_j}$)}$-critical $D_2$-symmetric
knots $\Gamma_{\vartheta_j}$ obtained in Theorem 
\ref{thm:symmetric-critical}.
Then there exists an arclength parametrized curve
$\Gamma_0\in W^{2,2}(\R/\Z,\R^3)$ with dihedral symmetry, and
a subsequence $\big(\Gamma_{\vartheta_{j_k}}\big)_k\subset
\br{\Gamma_{\vartheta_j}}_j$ such that the
$\Gamma_{\vartheta_{j_k}}$ converge weakly in $W^{2,2}$ and strongly
in $C^1$ to $\Gamma_0$ as $k\to\infty$. Moreover,
\begin{equation}\label{eq:E_b-symm-minimizer}
E(\Gamma_0)\le E(\beta)\quad\textnormal{for all $D_2$-symmetric 
$\beta\in\CK.$}
\end{equation}
\end{theorem}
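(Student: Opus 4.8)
The plan is to carry out the direct method in the spirit of \cite{gerlach-etal_2017}, using the fact that each $\Gamma_{\vth_j}$ is not merely a critical point but is obtained as a \emph{minimizer} of $E_{\vth_j}$ over all $D_2$-symmetric competitors in $\CK$ in the proof of Theorem~\ref{thm:symmetric-critical} (this is how Palais's principle is applied). Fix one $D_2$-symmetric representative $\beta\in\CK$, which exists by hypothesis and satisfies $\mathcal{R}(\beta)=\TP_q(\beta)^{1/(q-2)}<\infty$ because $\beta$ is an embedded regular curve of class $W^{2,2}$. The minimizing property together with $\mathcal{R}\ge0$ then gives
\[
E(\Gamma_{\vth_j})\le E_{\vth_j}(\Gamma_{\vth_j})\le E_{\vth_j}(\beta)=E(\beta)+\vth_j\,\mathcal{R}(\beta),
\]
so that $\sup_j E(\Gamma_{\vth_j})<\infty$ and, more precisely, $\limsup_j E(\Gamma_{\vth_j})\le E(\beta)$.

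From the energy bound I would deduce a uniform $W^{2,2}$-estimate. Since the $\Gamma_{\vth_j}$ are arclength parametrized, $\abs{\Gamma_{\vth_j}'}\equiv1$ and $E(\Gamma_{\vth_j})=\int_{\R/\Z}\abs{\Gamma_{\vth_j}''}^2\,ds=\norm[L^2]{\Gamma_{\vth_j}''}^2$, so the second derivatives are bounded in $L^2$. The $D_2$-symmetry forces the barycenter of each $\Gamma_{\vth_j}$ to the origin---the only common fixed point of the three coordinate-axis rotations---so no translational normalization is needed and $\norm[L^\infty]{\Gamma_{\vth_j}}\le\tfrac12$, the diameter bound for a closed unit-length curve. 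Hence $\br{\Gamma_{\vth_j}}_j$ is bounded in $W^{2,2}(\R/\Z,\R^3)$. By reflexivity and the compact embedding $W^{2,2}(\R/\Z,\R^3)\hookrightarrow C^1(\R/\Z,\R^3)$ I extract a subsequence $\Gamma_{\vth_{j_k}}$ converging weakly in $W^{2,2}$ and strongly in $C^1$ to some $\Gamma_0\in W^{2,2}(\R/\Z,\R^3)$.

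The strong $C^1$-convergence transfers the unit-speed constraint, $\abs{\Gamma_0'}\equiv1$ and hence $\length(\Gamma_0)=1$, and---because the equivariance relations defining $D_2$-symmetry are preserved under uniform convergence---the dihedral symmetry of $\Gamma_0$. It remains to verify~\eqref{eq:E_b-symm-minimizer}. As every curve in sight is arclength parametrized, $E$ coincides with the convex, strongly continuous, hence weakly lower semicontinuous functional $\g\mapsto\norm[L^2]{\g''}^2$; combining weak lower semicontinuity with the bound above yields, for every $D_2$-symmetric $\beta\in\CK$,
\[
E(\Gamma_0)\le\liminf_{k\to\infty}E(\Gamma_{\vth_{j_k}})\le\limsup_{k\to\infty}E(\Gamma_{\vth_{j_k}})\le E(\beta),
\]
which is precisely~\eqref{eq:E_b-symm-minimizer}.

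The argument is essentially a routine application of the direct method, and I expect the only genuine input to be the first step: one must invoke that Theorem~\ref{thm:symmetric-critical} produces a symmetric \emph{minimizer} rather than an arbitrary critical point, since bounding $E(\Gamma_{\vth_j})$ via the variational equation~\eqref{eq:intro-ELG} directly would be hopeless given the complicated differential of $\TP_q$. Two further points deserve care but pose no real difficulty: the tangent-point term drops out of the limit because it is weighted by $\vth_j\to0$, and $\Gamma_0$ is \emph{not} asserted to lie in $\CK$---indeed it is generally non-embedded---so \eqref{eq:E_b-symm-minimizer} is a one-sided comparison against symmetric competitors only.
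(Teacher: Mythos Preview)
Your proof is correct and follows essentially the same route as the paper: use the minimizing property of the $\Gamma_{\vth_j}$ against a fixed $D_2$-symmetric competitor $\beta$ to bound the bending energies uniformly, obtain a $W^{2,2}$-bound, extract a subsequence via compactness, and conclude by weak lower semicontinuity of $E$. The only cosmetic difference is the $L^\infty$-estimate: the paper invokes its sharp Lemma~\ref{lem:improved-Linfty-bound} (yielding $\tfrac14$), whereas you use the barycenter-at-origin observation together with the diameter bound, which is equally sufficient.
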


\begin{definition}[Symmetric elastic knots]
\label{def:symmetric-elastic-knots}
Any such curve $\Gamma_0$ obtained in Theorem~\ref{thm:symmetric-elastic-knots}
is called a \emph{dihedral (or $D_2$-) elastic knot for $\mathcal{K}$.}
\end{definition}

\subsection*{Shapes of symmetric elastic knots}
The unknot class 
and the torus knot class 
$\TL(2,b)$
for any odd $b\in\Z\setminus\{1,-1\}$ satisfy the hypothesis of 
Theorem~\ref{thm:symmetric-elastic-knots}; see Examples~\ref{ex:one-circle}
and~\ref{ex:D2-symmetric-torus-knot}. Consequently, there are 
$D_2$-elastic knots for the unknot class and {for}
$\TL(2,b)$, and
we can determine their shapes, which also turn out to \emph{characterize}
these knot classes.

\begin{theorem}[The $D_{2}$-elastic unknot]
\label{thm:elasym-unknot}
{Up to reparametrization {and isometry}}, 
the only $D_2$-elastic unknot is the
once covered 
circle of length one.
Moreover,
if a $D_2$-elastic knot for some knot class $\mathcal{K}$ is the
once covered circle, then $\mathcal{K}$ is the unknot class.
{{Only the unknot class $\mathcal{K}$ satisfies}
\begin{equation}\label{eq:infimum-minimal-unknot}
\inf_{\beta\in\CK\atop
\textnormal{$\beta$ is $D_2$-symmetric}}E(\beta)
=\inf_{\beta\in W^{2,2}\atop\mathscr L(\beta)=1}E(\beta)=(2\pi)^2.
\end{equation}} 
\end{theorem}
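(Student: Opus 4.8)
The plan is to establish the three assertions of Theorem~\ref{thm:elasym-unknot} in turn, relying on the lower bound $E(\g)\ge(2\pi)^2$ valid for every closed curve of length one (the Fenchel-type inequality), with equality precisely for the once-covered circle. First I would treat the characterization of the $D_2$-elastic unknot. By Example~\ref{ex:one-circle} the once-covered circle is $D_2$-symmetric and lies in $\CC{\mathcal K}$ for the unknot class, so Theorem~\ref{thm:symmetric-elastic-knots} applies and yields a $D_2$-elastic unknot $\Gamma_0$. Since the circle realizes the absolute minimum $(2\pi)^2$ among \emph{all} closed unit-length curves, the minimality property~\eqref{eq:E_b-symm-minimizer} forces $E(\Gamma_0)=(2\pi)^2$, and the equality case of the Fenchel inequality pins down $\Gamma_0$ as the once-covered circle, up to reparametrization and isometry. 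This gives existence and uniqueness of the $D_2$-elastic unknot.

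Next I would prove the converse statement: if a $D_2$-elastic knot $\Gamma_0$ for some class $\mathcal K$ happens to be the once-covered circle, then $\mathcal K$ must be the unknot class. The key observation is that the once-covered circle is embedded, hence its own knot type is the unknot. The subtle point is to connect the knot type of the limit $\Gamma_0$ with the prescribed class $\mathcal K$: the approximating curves $\Gamma_{\vth_{j_k}}$ all have type $\mathcal K$, but they converge only in $C^1$, and $C^1$-convergence does not in general preserve knot type in the limit. So I would argue through the minimality~\eqref{eq:E_b-symm-minimizer}: since $\Gamma_0$ is the circle with $E(\Gamma_0)=(2\pi)^2$, every $D_2$-symmetric $\beta\in\CC{\mathcal K}$ satisfies $E(\beta)\ge(2\pi)^2$, so the infimum over $D_2$-symmetric competitors in $\CC{\mathcal K}$ equals $(2\pi)^2$. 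This is exactly the left-hand equality in~\eqref{eq:infimum-minimal-unknot}, and combined with the global lower bound it shows the chain of equalities holds.

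It then remains to show that the infimum $(2\pi)^2$ over $D_2$-symmetric competitors in $\CC{\mathcal K}$ is attained only when $\mathcal K$ is the unknot. Here I would use an approximation/recovery argument: the value $(2\pi)^2$ can be approached within $\CC{\mathcal K}$ by $D_2$-symmetric curves only if one can produce embedded representatives of $\mathcal K$ whose bending energy tends to $(2\pi)^2$. Any sequence of such competitors subconverges in $C^1$ to a curve of energy $(2\pi)^2$, namely the circle, which is embedded; a standard argument then shows that $C^1$-closeness to an \emph{embedded} limit does preserve the knot type, so the members of the sequence, and hence $\mathcal K$, are unknotted.

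The main obstacle I anticipate is precisely this last interplay between $C^1$-convergence and knot type. $C^1$-convergence alone can change the knot type in the limit (knottedness can escape to small scales), so the argument must exploit that the limit circle is embedded with a definite positive distance between non-adjacent arcs; this gives a tubular neighborhood into which the $C^1$-close approximants eventually fit, forcing them to be isotopic to the circle and hence unknotted. Making this tube argument precise --- quantifying how close in $C^1$ suffices in terms of the reach of the circle --- is the technical heart of the proof, whereas the energy comparisons and the Fenchel equality case are routine.
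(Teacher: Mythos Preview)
Your argument for the first assertion is correct and in fact more economical than the paper's: the paper shows that the once covered circle is the \emph{unique} $E_\vth$-minimizer for every $\vth>0$ by invoking both the Langer--Singer stability result for $E$ and a separate uniqueness result for $\TP_q$, so that each $\Gamma_\vth$ already \emph{is} the circle. You bypass this by working directly with the limit $\Gamma_0$ and the equality case of the Fenchel--H\"older estimate, which needs only that the $D_2$-symmetric circle from Example~\ref{ex:one-circle} is an admissible competitor in~\eqref{eq:E_b-symm-minimizer}.

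There is, however, a genuine gap in your treatment of the converse. From~\eqref{eq:E_b-symm-minimizer} and $E(\Gamma_0)=(2\pi)^2$ you deduce $E(\beta)\ge(2\pi)^2$ for every $D_2$-symmetric $\beta\in\CK$, hence $\inf\ge(2\pi)^2$; but you then assert $\inf=(2\pi)^2$, which does not follow. The limit $\Gamma_0$ is not itself a competitor in $\CK$ (it lies only in the $C^1$-closure), and lower semicontinuity of $E$ points the wrong way to conclude that the infimum is \emph{attained} or even approached at the value $(2\pi)^2$. So your reduction of the second assertion to the third is broken. The paper avoids this detour entirely: since the approximating knots $\Gamma_{\vth_{j}}\in\CK$ converge in $C^1$ to the embedded circle, Lemma~\ref{lem:isot} (isotopy stability) places them, for large $j$, inside a $C^1$-neighbourhood consisting only of unknots, forcing $\mathcal K$ to be trivial. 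This is precisely the tube argument you describe in your last paragraph, but it should be applied directly to the sequence $\Gamma_{\vth_j}$ rather than to a hypothetical $E$-minimizing sequence; and it is not the ``technical heart'' you anticipate, since Lemma~\ref{lem:isot} is available as a black box.

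For the third assertion your minimizing-sequence argument is correct but heavier than necessary. The paper simply notes that for any non-trivial $\mathcal K$ the F\'ary--Milnor bound gives $E(\beta)\ge(4\pi)^2>(2\pi)^2$ for every $\beta\in\CK$, so~\eqref{eq:infimum-minimal-unknot} fails immediately; that the unknot class does satisfy~\eqref{eq:infimum-minimal-unknot} follows because the $D_2$-symmetric circle of Corollary~\ref{cor:circle} realizes the value $(2\pi)^2$.
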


If we have specific information about the infimal
bending energy on non-trivial
knots with
dihedral symmetry, then we can identify the shape of the corresponding
$D_2$-elastic knot. To make this more precise, recall that the natural lower bound for the total curvature
$\TC(\g):=\int_\g\kappa \,ds$
{is $2\pi$
by virtue of Fenchel's theorem~\cite{fenchel_1929}.
Applying H\"older's inequality it transfers to
the natural lower bound $(2\pi)^2$ for the bending energy
of closed curves of length one. 
{Therefore,} \eqref{eq:infimum-minimal-unknot}
is in fact equivalent to
$\inf\sett{E(\beta)}{\beta\in\CK,\beta\textnormal{ is $D_2$-symmetric}}\le(2\pi)^{2}$.
In case of non-trivial knots, the total curvature is bounded
below by $4\pi$}
according to the famous result of F\'ary and Milnor (see~\cite{fary_1949,milnor_1950})
which gives rise to the lower bound $(4\pi)^{2}$ for the bending
energy
for any non-trivial knot class $\mathcal{K}$.

Knot classes $\mathcal{K}$
for which the infimal bending energy \emph{equals} the
natural lower bound $(4\pi)^2$, i.e., for which
\begin{equation}\label{eq:infimum-minimal}
\inf_{\beta\in\CK\atop \beta\,\,\textnormal{is $D_2$-symmetric}}E(\beta)=(4\pi)^2
=\inf_{\CK}E(\cdot)
\end{equation}
are of particular interest, since they
provide the variational problem with
a high degree of rigidity; see Theorem~\ref{thm:rigidity}.

In Section~\ref{sec:example} we analyze a sequence of specific $D_2$-symmetric
$(2,b)$-torus knots like 
in Figure~\ref{fig:constrB} to show that all torus knot classes
$\TL(2,b)$ for odd integers $b\in\/\setminus\{-1,1\}$ satisfy
condition \eqref{eq:infimum-minimal}. It actually turns out that
there are no other knot classes satisfying \eqref{eq:infimum-minimal}. This
leads to the following central \emph{characterization of $D_2$-elastic
$(2,b)$-torus knots.}

\begin{theorem}[$D_2$-elastic $(2,b)$-torus knots]\label{thm:mainthm}
The following statements hold up to {isometry and} reparametrization. 
\begin{enumerate}
\item[\rm (i)]
The unique $D_2$-elastic $(2,b)$-torus knot for any odd 
$b\in\Z\setminus\{1,-1\}$ is 
the  
tangential pair 
of co-planar circles with exactly one point in common, denoted by $\tpc[\pi]$.
Any sequence of $D_2$-symmetric
$E_\vth$-critical $(2,b)$-torus
knots $\Gamma_\vth$ converges \emph{strongly}
in $W^{2,2}$ to $\tpc[\pi]$
 as $\vth\to 0$.
\item[\rm (ii)]
If a $D_2$-elastic knot for some knot class $\mathcal{K}$ is
$\tpc[\pi]$ then $\mathcal{K}=\TL(2,b)$
for some odd $b\in\Z\setminus\{1,-1\}$.
\item[\rm (iii)] 
If any {non-trivial} knot class $ \mathcal{K}$ satisfies 
\begin{equation}\tag{\ref{eq:infimum-minimal}*}
\inf_{\beta\in\CK\atop \beta\,\,\textnormal{is $D_2$-symmetric}}E(\beta)\le(4\pi)^2
\end{equation}
then $\mathcal{K}=\TL(2,b)$ for some odd $b\in\Z\setminus\{1,-1\}.$
\end{enumerate}
\end{theorem}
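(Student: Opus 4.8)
The plan is to deduce all three parts from a single rigidity phenomenon, so I begin with the analytic core. For every arclength curve of length one, Hölder's inequality yields $\TC(\g)^2=\big(\int_\g\kappa\,ds\big)^2\le\int_\g\kappa^2\,ds=E(\g)$, with equality precisely when $\kappa$ is constant, while F\'ary--Milnor forces $\TC\ge4\pi$, hence $E\ge(4\pi)^2$, on every nontrivial knot. Thus, whenever a nontrivial class obeys the inequality of~(iii), the bound is saturated: $\inf_{D_2}E=(4\pi)^2=\inf_{\CK}E$, i.e.\ \eqref{eq:infimum-minimal} holds, every $D_2$-symmetric minimizing sequence $\beta_n\in\CK$ has $\TC(\beta_n)\to4\pi$, and its curvature tends in $L^2$ to the constant $4\pi$. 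For such classes I would invoke the rigidity of Theorem~\ref{thm:rigidity} to rule out loss of total curvature in the limit and obtain \emph{strong} $W^{2,2}$-convergence of $\beta_n$ (along a subsequence) to a closed $C^1$ curve of constant curvature $4\pi$.

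The next step is to classify this limit. Using the fundamental-arc description of Lemma~\ref{lem:glueing} together with the explicit $D_2$-action, a $D_2$-symmetric closed curve of length one with constant curvature $4\pi$ is built from one circular arc of radius $1/(4\pi)$, and closing four copies of such an arc under the three $\pi$-rotations leaves, up to isometry and reparametrization, only the doubly-covered circle and the tangential pair of coplanar circles $\tpc[\pi]$; both have $E=(4\pi)^2$. I would then exclude the doubly-covered circle: its embedded symmetric approximations must coil around the underlying circle, and I expect to show that the required $D_2$-action — in particular the $\pi$-rotation about an axis lying in the plane of the circle — is incompatible with such a coiling, whereas it is realized naturally by two distinct tangent circles. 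This leaves $\tpc[\pi]$ as the only admissible limit.

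With these two ingredients the parts assemble as follows. For~(i), Section~\ref{sec:example} (Example~\ref{ex:D2-symmetric-torus-knot}) establishes \eqref{eq:infimum-minimal} for every $\TL(2,b)$, so the rigidity and the classification apply and identify the $D_2$-elastic $(2,b)$-torus knot as $\tpc[\pi]$, which is unique up to isometry and reparametrization. For the convergence statement I would note that each $\Gamma_\vth$, being a genuine nontrivial knot of length one, satisfies $E(\Gamma_\vth)\ge(4\pi)^2$, while minimality of $\Gamma_\vth$ for $E_\vth$ over the $D_2$-symmetric curves in $\CK$ gives $E(\Gamma_\vth)\le E(\beta)+\vth\,\mathcal{R}(\beta)$ for every fixed symmetric competitor $\beta$, so that $E(\Gamma_\vth)\to(4\pi)^2$; combined with the rigidity this upgrades the subsequential weak $W^{2,2}$- and strong $C^1$-convergence of Theorem~\ref{thm:symmetric-elastic-knots} to strong $W^{2,2}$-convergence of the full family to $\tpc[\pi]$, uniqueness of the limit modulo isometry removing the need to pass to a subsequence. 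For~(ii), if the $D_2$-elastic knot for $\mathcal{K}$ is $\tpc[\pi]$, then it is not the once-covered circle, so $\mathcal{K}$ is nontrivial by Theorem~\ref{thm:elasym-unknot}; the approximating symmetric knots $\Gamma_\vth\in\CK$ then have bounded bending energy and converge in $C^1$ to $\tpc[\pi]$, and the topological identification below forces $\mathcal{K}=\TL(2,b)$. For~(iii), nontriviality and the assumed inequality give \eqref{eq:infimum-minimal}, so a symmetric minimizing sequence converges strongly to $\tpc[\pi]$ by the first two steps, and the same identification yields $\mathcal{K}=\TL(2,b)$.

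The hard part, I expect, is this topological identification — that a nontrivial class whose $D_2$-symmetric representatives can degenerate to the figure-eight $\tpc[\pi]$ must be some $\TL(2,b)$ — together with the exclusion of the doubly-covered circle. Since the tangent-point term of $E_\vth$ carries no weight as $\vth\to0$, it cannot constrain the limit, so the knot type must be read off the symmetric curves \emph{before} the limit is taken: using Lemma~\ref{lem:glueing} I would analyze how a $D_2$-symmetric embedded curve of bounded bending energy that is $C^1$-close to $\tpc[\pi]$ resolves the single tangency, and argue that the symmetry confines the crossing pattern to a $(2,b)$-winding with odd $b$. Making this winding count rigorous, uniformly along the degenerating family and compatibly with the $D_2$-action, is the step I anticipate will demand the most care.
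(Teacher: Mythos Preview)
Your overall architecture is sound, but there is one genuine gap and several places where you work harder than the paper does.

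The gap is in your classification step: a closed curve in $\R^3$ with constant curvature $4\pi$ is \emph{not} automatically built from circular arcs. Helices have constant curvature, and the fundamental-arc description of Lemma~\ref{lem:glueing} does not rescue this, since the generating arc itself need not be planar. The paper closes this gap inside the proof of Theorem~\ref{thm:rigidity} by first showing that the limit $\Gamma_0$ has a \emph{double point} (were it embedded it would lie in $\CK$ and minimize $E$ there, hence be a stable closed elastica, hence the round circle by Langer--Singer~\cite{langer-singer_1985}, contradicting nontriviality), and then invoking~\cite[Cor.~3.4]{gerlach-etal_2017}, which classifies closed unit-length curves of constant curvature $4\pi$ with a self-intersection as precisely the one-parameter family $\tpc[\varphi]$, $\varphi\in[0,\pi]$. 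The $D_2$-symmetry then narrows this to $\varphi\in\{0,\pi\}$, and $\tpc[0]$ is excluded not by analyzing approximants as you propose, but by a two-line parametric argument: $\tpc[0]$ is $\tfrac12$-periodic, so $\tau^1_{d_2}(\tpc[0])=\tpc[0]$ forces $\tpc[0](t)=R_2\circ\tpc[0](t)$ for all $t$, i.e.\ the whole curve lies on $\R\mathbf{e_2}$, which is absurd.

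You are also taking detours. For part~(iii), the assumption implies a fortiori that the \emph{unconstrained} infimum satisfies $\inf_{\CK}E\le(4\pi)^2$, hence equals $(4\pi)^2$ by F\'ary--Milnor; Corollary~\ref{cor:infimum-trefoil} (ultimately~\cite[Cor.~4.4]{gerlach-etal_2017}) then says directly that only the classes $\TL(2,b)$ achieve this---there is no need to pass through $\tpc[\pi]$ at all. For part~(ii), the topological identification you flag as hardest is exactly~\cite[Prop.~4.2]{gerlach-etal_2017}: any embedded curve $C^1$-close to $\tpc[\pi]$ is a $(2,b)$-torus knot or an unknot, determined by a cumulative-angle count, and this argument makes \emph{no} use of the $D_2$-symmetry, so your plan to constrain the crossing pattern via the group action is unnecessary. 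Finally, Theorem~\ref{thm:rigidity} already packages the full conclusion of part~(i), including the identification of the limit and the strong $W^{2,2}$-convergence of the whole family; once Corollary~\ref{cor:infimum-trefoil} supplies~\eqref{eq:infimum-minimal} for $\TL(2,b)$, part~(i) is a one-line citation.
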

{Note that~\eqref{eq:infimum-minimal}
and~(\ref{eq:infimum-minimal}*) are in fact equivalent
due to the F\'ary--Milnor theorem.}

The one-parameter family of \emph{tangential pairs of circles} $\tpc[\varphi]$
for $\varphi\in [0,\pi]$ 
was introduced in \cite{gerlach-etal_2017}; see Figure~\ref{fig:tpc_phi}. It consists of (isometric
images of) pairs of circles each with radius $1/(4\pi)$ that 
intersect each other tangentially in at least one point. The parameter  
$\varphi$ describes the angle between the two planes spanned by the two
circles. Only for $\varphi=0$ and $\varphi=\pi$ the two planes coincide, and
the tangential pair of co-planar circles addressed in the rigidity
result, Theorem~\ref{thm:rigidity}, is {(an isometric image of)}  
$\tpc[\pi]$;
see Example~\ref{ex:tpc_pi}. Part (i) of Theorem~\ref{thm:mainthm} improves the
weak $W^{2,2}$-subconvergence of  $D_2$-symmetric $E_\vth$-critical
knots $\Gamma_\vth$, established in Theorem~\ref{thm:symmetric-elastic-knots} for 
general knot classes $\mathcal{K}$, now to the \emph{strong} convergence of
\emph{every} sequence
of $D_2$-symmetric $E_\vth$-critical points $\Gamma_\vth$
  to $\tpc[\pi]$ 
  for all torus knot classes $\TL(2,b)$. Therefore, the limit curve $\tpc[\pi]$  describes the approximate shape of the $\Gamma_\vth$ for small
  $\vth$, thus supporting the sometimes experimentally observed final configurations
  of the numerical gradient flow of Bartels et al.~\cite{bartels-etal_2018}; see Figure~\ref{fig:bartels-reiter}~{(right)}.
However, as pointed out before, it 
{seems unlikely} that these $D_2$-symmetric critical points
  $\Gamma_\vth$ are local minimizers of $E_\vth$. To clarify this, one would need to analyze
  the second variation of the total energy containing the complicated non-local terms 
  of the tangent-point energy $\TP_q$.

\begin{figure}
 \includegraphics[scale=.5,trim=60 130 40 50,clip]{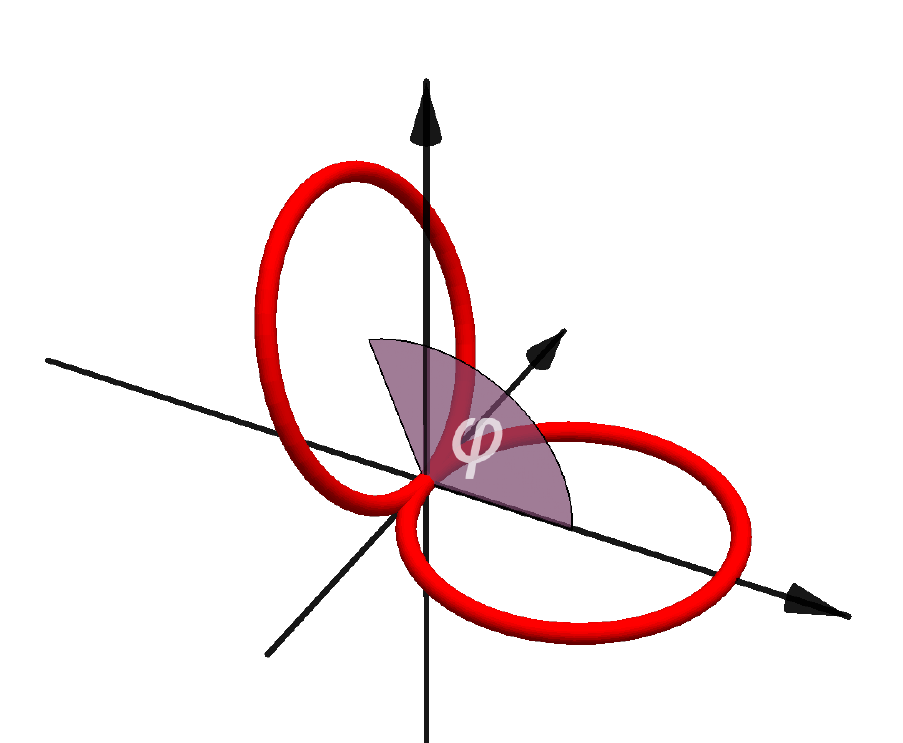}
  \caption{The family of tangential pairs of circles $\tpc$ parametrized by
  the opening angle $\varphi$ between the two planes containing the circles.}\label{fig:tpc_phi}
    \end{figure}

\subsection*{Outline}

The paper is structured as follows. In Section~\ref{sec:psk} we briefly review the
basics of the principle of symmetric criticality along the lines of the presentation
in~\cite{gilsbach_2018} and~\cite[Section 2]{gilsbach-vdm_2018}, from which  we adopted our approach to 
apply symmetric criticality to knotted space curves. The relevant facts about the
tangent-point energy  are presented in Section~\ref{sec:tanpoint}.

In Section~\ref{sec:group} we discuss the group action in detail, 
show how to construct {and characterize}
dihedrally symmetric space curves
with increasing regularity {(Definition~\ref{def:dihedral} 
and Proposition
\ref{prop:character-symmetry})}.  
{Of particular importance in our context are}
dihedrally symmetric circles and planar tangential pairs of circles 
{whose
exact location in space is determined in~Corollary~\ref{cor:circle}.}
We also prove in Lemma~\ref{lem:arclength-sym} that reparametrization to arclength
does not destroy the symmetry and provide a sharp a priori estimate on the
size of $D_2$-symmetric curves; see Lemma~\ref{lem:improved-Linfty-bound}.
We finally identify the suitable Banach manifold of $W^{2,2}$-regular knots
in Lemma~\ref{lem:Omega-banach}, on which the group $D_2$ acts in a sufficiently
regular way required by Palais's symmetric criticality principle 
(Lemma~\ref{lem:group-action-Omega}).

Section~\ref{sec:existence}
is devoted to proving the existence of the $D_2$-symmetric critical points 
as stated in Theorem~\ref{thm:symmetric-critical}, by
first minimizing a rescaled total energy on symmetric knots (Theorem
\ref{thm:existence-total}). By symmetric criticality these minimizers turn out
to be critical points among all knots in the given knot class 
 (Corollary~\ref{cor:symm-critic}) satisfying the desired Euler--Lagrange equation \eqref{eq:intro-ELG}
as shown in Corollary~\ref{cor:ELG}. Moreover, the existence of $D_2$-symmetric
elastic knots in any given tame knot class, i.e., the proof of Theorem
\ref{thm:symmetric-elastic-knots}, is established.
The remainder of Section
\ref{sec:existence} and Section~\ref{sec:example} deal with the shape of symmetric elastic knots, that is,
the proofs of Theorems~\ref{thm:elasym-unknot} and~\ref{thm:mainthm},
the latter with the help of 
a general rigidity result (Theorem~\ref{thm:rigidity}) for all knot classes
satisfying \eqref{eq:infimum-minimal} and an explicit convergence proof
of the torus knots of Example~\ref{ex:D2-symmetric-torus-knot} towards $\tpc[\pi]$ carried
out in Lemma~\ref{lem:convergence}.

In Section \ref{sec:discussion} we briefly touch on higher regularity
of the symmetric critical knots obtained in Theorem 
\ref{thm:symmetric-critical},
as well as on the question whether symmetric elastic knots are embedded.
The concept of symmetric knots also applies to
other symmetry classes than~$D_{2}$.
We give a {brief} outlook on the general case {in that section.}

The simulations shown in Figures~\ref{fig:bartels-reiter}
and~\ref{fig:threefoil} have been
carried out using the algorithm described in~\cite{bartels-reiter_2018}
which bases on an earlier work by Bartels~\cite{bartels_2013}.
We also refer to the app KNOTevolve~\cite{knotevolve}.

\section{Preliminaries}\label{sec:prelim}
\subsection{Principle of symmetric criticality}\label{sec:psk}
Let us briefly recall the notion of a group action on a Banach manifold modeled over
a Banach space  to describe symmetry in a mathematically rigorous 
way, cf. \cite[pp.~19--20, 26]{palais_1979}.
\begin{definition}\label{def:symmetry}
For $k\in\N$ let $\mathscr{M}$\! be a $C^k$-Banach manifold modeled
over a Banach space $\mathscr{B}$, and suppose that $(G,\circ)$ is a group.
\begin{enumerate}
\item[\rm (i)]
If there is a mapping $\tau$ assigning {to} each $(g,x)\in G\times\mathscr{M}$ {a point}
$\tau_g(x)\in\mathscr{M}$ such that
\begin{equation}\label{eq:homo}
\tau_{g\circ h}(x)=\tau_g(\tau_h(x))\Foa g,h,\in G, \,x\in\mathscr{M},
\end{equation}
then the group $G$ is said to \emph{act on $\mathscr{M}$\!}, and $\tau$
is called a \emph{representation of $G$ on $\mathscr{M}$\!.}
\item[\rm (ii)]
If for each $g\in G$ the mapping $\tau_g:\mathscr{M}\to\mathscr{M}$ is a
$C^k$-diffeomorphism, then $\mathscr{M}$\! is called a \emph{$G$-manifold
(of class $C^k$)}. For an infinite Lie group $G$ one additionally requires that
the representation $\tau$ is of class $C^k$ on $G\times
\mathscr{M}$.  If $\mathscr{M}$\! is itself a Banach space and $\tau_g$ is
linear then $\mathscr{M}$\! is said to be a \emph{$G$-space}.
\item[\rm (iii)]
For a $G$-manifold  $\mathscr{M}$\! the  \emph{$G$-symmetric subset}
$\Sigma\subset\mathscr{M}$ is defined as
\begin{equation}\label{eq:symmetric-subset}
\Sigma:=\{x\in\mathscr{M}:\tau_g(x)=x\quad\textnormal{for all $g\in G$}\}.
\end{equation}
\item[\rm (iv)]
A function $F:\mathscr{M}\to\R$ is called \emph{$G$-invariant} if and only if
\begin{equation}\label{eq:invariance}
F(\tau_g(x))=F(x)\quad\Foa g\in G,\,x\in\mathscr{M}.
\end{equation}
\end{enumerate}
\end{definition}
Palais proved the following \emph{symmetric criticality principle.}
\begin{theorem*}[{\cite[Theorem 5.4]{palais_1979}}]
Suppose $G$ is a compact Lie group and $\mathscr{M}$\! a $G$-manifold of class
$C^1$ over the Banach space $\mathscr{B}$ with the non-empty $G$-symmetric 
subset $\Sigma\subset\mathscr{M}$, and let $F:\mathscr{M}\to\R$ be a
$G$-invariant function of class $C^1$. Then $\Sigma$ is a $C^1$-submanifold
of $\mathscr{M}$\!, and $x\in\Sigma$ is a critical point of $F$ if and only if $x$
is critical for the restricted functional $F|_\Sigma:\Sigma\to\R$. {That is,
 if $D(F|_\Sigma)(x)v=0$ for all
$v\in T_x\Sigma$, then also $DF(x)w=0$ for all $w\in T_x\mathscr{M}.$}
\end{theorem*}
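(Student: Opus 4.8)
The plan is to exploit the $G$-invariance of $F$ together with an averaging argument over the compact group $G$ using its normalized Haar measure $\mu$. First I would dispose of the trivial implication: if $x\in\Sigma$ is critical for $F$, meaning $DF(x)w=0$ for all $w\in T_x\mathscr{M}$, then a fortiori $DF(x)v=0$ for all $v\in T_x\Sigma\subset T_x\mathscr{M}$, so $x$ is critical for the restriction $F|_\Sigma$. The substance of the theorem lies in the converse, and it is here that compactness of $G$ enters decisively.

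The key observation is a linearized invariance identity at a fixed point. Fix $x\in\Sigma$, so that $\tau_g(x)=x$ for every $g\in G$. Differentiating the invariance relation $F(\tau_g(y))=F(y)$ in $y$ at $y=x$ and using the chain rule together with $\tau_g(x)=x$ yields
\begin{equation*}
DF(x)\circ T_x\tau_g=DF(x)\quad\text{for all }g\in G.
\end{equation*}
Next I would average the linearized action. The maps $T_x\tau_g\colon T_x\mathscr{M}\to T_x\mathscr{M}$ form a continuous representation of $G$ by bounded linear isomorphisms, and since $G$ is compact the Banach-space-valued integral
\begin{equation*}
P:=\int_G T_x\tau_g\,d\mu(g)
\end{equation*}
is well defined and, by left-invariance of $\mu$ and the homomorphism property~\eqref{eq:homo}, yields a bounded projection onto the fixed subspace $(T_x\mathscr{M})^G$ of the linearized action. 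Given an arbitrary $w\in T_x\mathscr{M}$, the identity above gives $DF(x)w=DF(x)(T_x\tau_g\,w)$ for every $g$, and integrating over $G$ produces $DF(x)w=DF(x)(Pw)$ with $Pw\in(T_x\mathscr{M})^G$. The final step is to identify $(T_x\mathscr{M})^G$ with $T_x\Sigma$; once this is in hand, criticality of $x$ for $F|_\Sigma$ forces $DF(x)(Pw)=0$, whence $DF(x)w=0$ for all $w$, i.e.\ $x$ is critical for $F$.

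The main obstacle is the first assertion of the theorem, that $\Sigma$ is a genuine $C^1$-submanifold of $\mathscr{M}$ with $T_x\Sigma=(T_x\mathscr{M})^G$; this is what makes the identification used above legitimate and what requires care in the Banach setting. I would construct a $G$-invariant local chart around each $x\in\Sigma$ by averaging an arbitrary chart over $G$ (again using Haar measure), so that in this chart the action is linear and $\Sigma$ corresponds locally to the fixed subspace $(T_x\mathscr{M})^G$. The projection $P$ above simultaneously exhibits $(T_x\mathscr{M})^G$ as a complemented subspace, which supplies the splitting needed for the submanifold structure. The delicate points are the continuity of $g\mapsto T_x\tau_g\,w$ and the existence of the vector-valued Haar integral in $\mathscr{B}$, both of which follow from the $C^1$-regularity of the representation $\tau$ on $G\times\mathscr{M}$ required in Definition~\ref{def:symmetry}~(ii) together with compactness of $G$; for the finite group relevant to our application these reduce to a finite average and are immediate.
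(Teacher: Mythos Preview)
The paper does not supply a proof of this statement; it is quoted verbatim as a result from the literature, namely \cite[Theorem~5.4]{palais_1979}, and is used as a black box (via Corollary~\ref{cor:psk}) in the existence arguments of Section~\ref{sec:existence}. There is therefore no proof in the paper to compare against.

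That said, your proposal is a faithful sketch of the standard argument and is essentially Palais's own: the linearized invariance identity $DF(x)\circ T_x\tau_g=DF(x)$ at a fixed point, Haar-averaging to produce the projection $P$ onto the fixed subspace $(T_x\mathscr{M})^G$, and the Bochner-type linearization (averaging a chart) to exhibit $\Sigma$ locally as this complemented fixed subspace, whence $T_x\Sigma=(T_x\mathscr{M})^G$. Your identification of the delicate step---that the submanifold assertion and the equality $T_x\Sigma=(T_x\mathscr{M})^G$ carry the real content in the Banach setting---is accurate, and your remark that for the finite group $D_2$ relevant here all integrals collapse to finite averages is exactly why the paper can invoke Corollary~\ref{cor:psk} without further ado.
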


As every finite group is a Lie group, cf., e.g.,~\cite[p.~48, Example 5]{cohn_1957}, we infer
the following result that we apply in Section~\ref{sec:existence} to
obtain dihedrally symmetric critical knots for the total energy $E_\vth$.
\begin{corollary}[Symmetric criticality for finite groups]
\label{cor:psk}
If $G$ is a finite group and $\mathscr{M}$\! a $G$-manifold of class $C^1$ with
non-empty
$G$-symmetric subset $\Sigma\subset\mathscr{M}$, and if $F\in C^1(\mathscr{M})$
is  $G$-invariant, then any critical point of $F|_\Sigma$ is
also a critical point of $F$. 
\end{corollary}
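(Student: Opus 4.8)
The plan is to deduce this directly from Palais's symmetric criticality principle stated above. The only discrepancy between the hypotheses of the corollary and those of that theorem is that Palais assumes $G$ to be a compact Lie group, whereas here $G$ is merely assumed finite. I would therefore first verify that a finite group, suitably topologized, is in fact a compact Lie group, and then check that all remaining hypotheses transfer verbatim.

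To this end, equip $G$ with the discrete topology, turning it into a $0$-dimensional smooth manifold. Group multiplication and inversion are then automatically smooth, since any map between $0$-dimensional manifolds is, so $G$ is a Lie group; cf.~\cite[p.~48, Example 5]{cohn_1957}. Being finite and discrete, $G$ is compact, hence a compact Lie group.

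Next I would confirm that the other preconditions of Palais's theorem hold. By assumption $\mathscr{M}$ is a $C^1$ $G$-manifold with non-empty $G$-symmetric subset $\Sigma$, and $F\in C^1(\mathscr{M})$ is $G$-invariant. The only point requiring attention is the additional smoothness condition that Palais imposes for \emph{infinite} Lie groups, namely that the representation $\tau$ be of class $C^1$ on the product $G\times\mathscr{M}$. Since our $G$ is $0$-dimensional (discrete), this condition is vacuous: a map on $G\times\mathscr{M}$ is $C^1$ precisely when each slice $\tau_g(\cdot)$ is, and by Definition~\ref{def:symmetry}(ii) every $\tau_g$ is already a $C^1$-diffeomorphism of $\mathscr{M}$.

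Finally, with all hypotheses in place, I would invoke Palais's theorem to conclude that $\Sigma$ is a $C^1$-submanifold of $\mathscr{M}$ and that a point $x\in\Sigma$ is critical for $F$ if and only if it is critical for $F|_\Sigma$. Extracting the relevant implication yields exactly the assertion of the corollary, that every critical point of $F|_\Sigma$ is a critical point of $F$. There is no genuine obstacle here; the single subtle point is observing that Palais's extra smoothness requirement for infinite Lie groups becomes vacuous in the discrete case, so that the corollary is really just a specialization of the theorem rather than an independent argument.
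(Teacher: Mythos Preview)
Your proposal is correct and follows exactly the paper's approach: the paper simply notes (citing \cite[p.~48, Example~5]{cohn_1957}) that every finite group is a (compact) Lie group and then infers the corollary directly from Palais's theorem. Your additional remarks on why the extra smoothness condition for infinite Lie groups is vacuous in the discrete case are a welcome elaboration, but the argument is the same specialization the paper intends.
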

Our choice of a Banach manifold will simply be an open subset $\Omega$ of
a Banach space $\mathscr{B}$. This allows us to identify the
differential of the energy $F:\Omega\to\R$ with the classic
Fr\'echet-differential $DF(x):T_x\Omega\simeq\mathscr{B}\to
T_{F(x)}\R\simeq\R,$ which can be computed by means of the first variation
\[
DF(x)h=\delta F(x,h):=\lim_{\epsilon\to 0}{\epsilon}^{-1}\big[F(x+\epsilon h)-F(x)\big]
\quad\Fo h\in\mathscr{B}.
\]

\subsection{Tangent-point energy}\label{sec:tanpoint}
As mentioned in the introduction, Gonzalez and Maddocks suggested in 
\cite[p. 4773]{gonzalez-maddocks_1999} to consider the tangent-point energy 
\eqref{eq:tan-point} as a candidate for a valuable knot energy. This was confirmed
in the work of  P. Strzelecki and the third author
\cite{strzelecki-vdm_2012} starting at a rather low level of regularity
with just rectifiable curves. In fact,  arclength parametrizations $\Gamma
\in C^{0,1}(\R/\Z,\R^3)$ of
rectifiable curves with finite tangent-point energy $\TP_q(\Gamma)<\infty$ for 
$q\ge 2$ are either injective or they are multiple coverings
of one-dimensional manifolds \cite[Theorem  1.1]{strzelecki-vdm_2012}.
{In addition, such curves $\Gamma$} are
of class $C^{1,1-(2/q)}$ if $q>2$; see \cite[Theorem 1.3]{strzelecki-vdm_2012}.

In the present context, however, dealing with the bending energy $E$ we
start  at the already higher regularity level
of closed $W^{2,2}$-curves which -- according to the Morrey--Sobolev embedding
theorem -- are automatically of class $C^{1,1/2}$.
Consequently, it suffices to
review Blatt's regularity results~\cite{blatt_2013b,blatt-reiter_2015a}
on $C^1$-curves with finite tangent-point energy.\footnote{\label{foot:decoup}Notice that the two-parameter family of energies 
$\TP^{(p,q)}$ considered in \cite{blatt-reiter_2015a} contains
the tangent-point energy, more precisely
$\TP_{q}=2^{q}\TP^{(2q,q)}$.}
One of the central results \cite[Theorem~1.1]{blatt_2013b,blatt-reiter_2015a}
characterizes finite energy among embedded curves
by fractional Sobolev regularity $W^{2-(1/q),q}$.
Here, we only need
 one part of that statement explicitly, in fact,  in a slightly sharpened version for not
 necessarily arclength parametrized curves established in
 \cite[Theorem~3.2~(ii)]{gilsbach-vdm_2018}.\footnote{{For the proof of}
 \cite[Lemma~A.1]{gilsbach-vdm_2018}, which is used {to establish}
 \cite[Theorem~3.2~(ii)]{gilsbach-vdm_2018}, {see the updated}
  arXiv version.}

\begin{theorem}\label{thm:energy-space}
Let $q\in (2,\infty)$ and suppose that $\g\in W^{2-(1/q),q}(\R/\Z,\R^3)$
is injective  and satisfies $|\g'|>0$ on $\R/\Z$ . Then $\TP_q(\g)<\infty$.
\end{theorem}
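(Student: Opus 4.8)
The plan is to exploit the explicit geometric formula for the inverse tangent-point radius, thereby reducing finiteness of $\TP_q$ to an estimate governed by the Gagliardo seminorm of $\g'$. Placing $\g(s)$ at the origin with $\g'(s)$ along the first coordinate axis, the circle through $\g(s),\g(t)$ tangent to $\g$ at $\g(s)$ has radius
\[
r_{\textnormal{tp}}(\g(s),\g(t))=\frac{\abs{\g(s)-\g(t)}^{2}}{2\,\dist\br{\g(t)-\g(s),\,\R\g'(s)}},
\]
where $\dist(v,\R w)$ denotes the Euclidean distance of the point $v$ from the line $\R w$. Substituting this into \eqref{eq:tan-point} and discarding the harmless constant $2^{q}$, finiteness of $\TP_q(\g)$ is equivalent to
\[
\int_{\R/\Z}\int_{\R/\Z}\frac{\dist\br{\g(t)-\g(s),\,\R\g'(s)}^{q}}{\abs{\g(s)-\g(t)}^{2q}}\,\abs{\g'(s)}\,\abs{\g'(t)}\,ds\,dt<\infty .
\]

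The first step, which I expect to be the main obstacle, is a global bi-Lipschitz lower bound for the denominator. Since $q>2$, we have $\g'\in W^{1-(1/q),q}(\R/\Z,\R^{3})$, and the fractional Morrey embedding on the one-dimensional domain $\R/\Z$ yields $\g'\in C^{0,1-(2/q)}$, hence $\g\in C^{1,1-(2/q)}$; in particular $\abs{\g'}$ attains a positive minimum and a finite maximum on the compact domain. I would then prove that injectivity together with $\abs{\g'}>0$ forces a constant $c>0$ with
\[
\abs{\g(s)-\g(t)}\ge c\,d_{\R/\Z}(s,t)\quad\text{for all }s,t\in\R/\Z,
\]
where $d_{\R/\Z}$ denotes the geodesic distance on $\R/\Z$. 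This is shown by contradiction and compactness: along a sequence of parameters violating the inequality one passes to convergent subsequences, and either the two limits are distinct---contradicting injectivity and continuity of $\g$---or they coincide, in which case the local $C^1$-structure with non-vanishing derivative produces the linear lower bound. This compactness argument is the only place where injectivity enters, and it is the crux of the proof.

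With the bi-Lipschitz bound available, the remaining estimate is a Fubini-type computation. For the numerator I would use $(t-s)\g'(s)\in\R\g'(s)$ and the fundamental theorem of calculus to obtain, integrating along the shorter arc,
\[
\dist\br{\g(t)-\g(s),\,\R\g'(s)}\le\abs{\int_s^t\br{\g'(\sigma)-\g'(s)}\,d\sigma}\le\int_s^t\abs{\g'(\sigma)-\g'(s)}\,d\sigma .
\]
Bounding $\abs{\g'(s)}\abs{\g'(t)}$ by $(\max\abs{\g'})^{2}$, raising the numerator bound to the $q$-th power by Hölder's inequality (which contributes a factor $d_{\R/\Z}(s,t)^{q-1}$), and inserting the bi-Lipschitz bound reduces the integrand to a constant multiple of
\[
\frac{1}{d_{\R/\Z}(s,t)^{q+1}}\int_s^t\abs{\g'(\sigma)-\g'(s)}^{q}\,d\sigma .
\]
Integrating out $t$ first and using $d_{\R/\Z}(\sigma,s)\le d_{\R/\Z}(s,t)$ for $\sigma$ on the arc between $s$ and $t$ together with $\int_{d_{\R/\Z}(\sigma,s)}^{\infty}\rho^{-(q+1)}\,d\rho=\tfrac1q\,d_{\R/\Z}(\sigma,s)^{-q}$, the whole expression collapses to a constant times the $q$-th power of the Gagliardo seminorm,
\[
\int_{\R/\Z}\int_{\R/\Z}\frac{\abs{\g'(\sigma)-\g'(s)}^{q}}{d_{\R/\Z}(\sigma,s)^{q}}\,d\sigma\,ds=\halfnorm[W^{1-(1/q),q}]{\g'}^{q}<\infty,
\]
which is finite by hypothesis. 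This establishes $\TP_q(\g)<\infty$ and completes the proof.
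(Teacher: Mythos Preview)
The paper does not give its own proof of this theorem; it is quoted from the literature (Blatt~\cite{blatt_2013b}, Blatt--Reiter~\cite{blatt-reiter_2015a}, with the non-arclength extension taken from~\cite{gilsbach-vdm_2018}). Your argument is correct and is in fact the standard one appearing in those references: write $1/r_{\textnormal{tp}}$ explicitly, obtain a bi-Lipschitz bound on $\g$ from injectivity, $C^{1}$-regularity and compactness, estimate the numerator via the fundamental theorem of calculus and H\"older, and collapse the double integral by Fubini to the Gagliardo seminorm of $\g'$ in $W^{1-1/q,q}$. One cosmetic point: when you integrate out $t$ the upper limit is $1/2$ rather than $\infty$, but extending the range only increases the bound, so the inequality is unaffected.
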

The other part of Blatt's characterization (or \cite[Theorem 1.3]{strzelecki-vdm_2012}
for that matter) can be  used to
quantify the degree of embeddedness
for arclength parametrized curves $\Gamma:\R/\Z\to\R^3$
by means of the \emph{bi-Lipschitz constant}
\begin{equation*}%
\BiLip(\Gamma):=\inf_{s,t\in\R/\Z\atop s\not=t}\frac{\abs{\Gamma(s)-\Gamma(t)}}{
\abs{s-t}_{\R/\Z}}.
\end{equation*}
\begin{lemma}[Bi-Lipschitz estimate for finite $\TP$-energy
{\cite[Proposition 2.7]{blatt-reiter_2015a}}]\label{lem:bilipschitz}
For any $q> 2$ and $T>0$ there is a constant $C=C(q,T)>0 $ such that any arclength parametrized and  injective curve $\Gamma\in C^{0,1}(\R/\Z,\R^3)$ with
$\TP_q(\Gamma)\le T$ satisfies
\begin{equation*}%
\BiLip(\Gamma)\ge C.
\end{equation*}
\end{lemma}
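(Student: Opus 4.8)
The plan is to argue by contradiction, extracting a limit curve whose forced self-contact is incompatible with the uniform energy bound.

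First I would upgrade regularity. Although $\Gamma$ is only assumed Lipschitz, the bound $\TP_q(\Gamma)\le T$ together with $q>2$ forces $\Gamma$ to be of class $C^{1,1-2/q}$ with the Hölder seminorm of the unit tangent controlled \emph{solely} by $q$ and $T$; this is the quantitative form of \cite[Theorem 1.3]{strzelecki-vdm_2012} quoted above. After fixing $\Gamma(0)=0$ by a translation this gives a uniform $C^{1,\alpha}$ bound with $\alpha:=1-2/q$, from which I would record two consequences. First, a \emph{uniform local} bi-Lipschitz estimate: there is $\ell_0=\ell_0(q,T)>0$ so that $|s-t|_{\R/\Z}\le\ell_0$ implies $|\Gamma(s)-\Gamma(t)|\ge\tfrac12|s-t|_{\R/\Z}$, obtained by writing $\Gamma(t)-\Gamma(s)=\int_s^t\Gamma'$, splitting off the leading term $(t-s)\Gamma'(s)$ of unit length, and bounding the remainder by $M|t-s|^{1+\alpha}$. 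Second, $C^1$-precompactness of any family of such curves.

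Now suppose the assertion fails: there is a sequence $(\Gamma_n)$ of arclength parametrized injective curves with $\TP_q(\Gamma_n)\le T$ but $c_n:=\BiLip(\Gamma_n)\to0$. By precompactness and Arzel\`a--Ascoli a subsequence converges in $C^1$ to a unit-speed limit $\Gamma_\infty\in C^{1,\alpha}$, and lower semicontinuity of $\TP_q$ under $C^1$-convergence (via Fatou, since the integrand converges pointwise off the contact set) yields $\TP_q(\Gamma_\infty)\le T<\infty$. Choosing near-optimal pairs $(s_n,t_n)$ with $|\Gamma_n(s_n)-\Gamma_n(t_n)|\le2c_n|s_n-t_n|_{\R/\Z}$, the uniform local bound forces $|s_n-t_n|_{\R/\Z}\ge\ell_0$ for large $n$ (else the ratio would be $\ge\tfrac12$). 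Passing to a further subsequence, $s_n\to s_*$ and $t_n\to t_*$ with $|s_*-t_*|_{\R/\Z}\ge\ell_0>0$, while $|\Gamma_\infty(s_*)-\Gamma_\infty(t_*)|=\lim|\Gamma_n(s_n)-\Gamma_n(t_n)|=0$. Thus $\Gamma_\infty$ is a unit-speed $C^1$ closed curve of finite tangent-point energy that is \emph{not} injective, so by the dichotomy \cite[Theorem 1.1]{strzelecki-vdm_2012} it must be a multiple covering of a one-dimensional manifold.

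To reach a contradiction I would show that injective curves of uniformly bounded energy cannot $C^1$-approach a multiple covering. The mechanism is an energy-concentration estimate based on the elementary identity $r_\textnormal{tp}(\Gamma(s),\Gamma(t))^{-1}=2\,\dist\br{\Gamma(t)-\Gamma(s),\R\,\Gamma'(s)}\big/|\Gamma(t)-\Gamma(s)|^{2}$. Near the doubled arc the two strands of $\Gamma_n$ are separated by some $\eps_n\to0$; writing $d_n(s):=\Gamma_n(s+\tfrac12)-\Gamma_n(s)$ and decomposing it into tangential and normal parts, the identity shows that for the pairs $t\approx s+\tfrac12$ the integrand is comparable to $|d_n^{\mathrm{normal}}(s)|\,|d_n(s)|^{-2}$ over a $t$-window of width $\sim|d_n(s)|$; integration then produces a contribution blowing up like $\eps_n^{2-q}\to\infty$ (here $q>2$ is essential) \emph{provided} the normal separation is non-degenerate on a set of definite size. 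This is exactly the step where injectivity enters and where I expect the main difficulty: a purely tangential (collinear) self-return carries no local energy, yet it is precisely the configuration realized by a genuine multiple covering. The crux is therefore to prove that an injective $C^{1,\alpha}$ curve which is $C^1$-close to a double cover must possess a normal self-separation of definite relative size on an arc of definite length --- a global, winding/degree-type obstruction --- after which the concentration estimate closes the contradiction and yields $\BiLip(\Gamma)\ge C(q,T)>0$.
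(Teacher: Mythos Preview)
The paper does not prove this lemma; it is simply quoted from \cite[Proposition~2.7]{blatt-reiter_2015a}. So there is no ``paper's own proof'' to compare against---your proposal is an independent attempt.

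Your outline is sound through the uniform $C^{1,\alpha}$ upgrade, the local scale $\ell_0$, the $C^1$-compactness, and the identification of the limit $\Gamma_\infty$ as a multiple covering via \cite[Theorem~1.1]{strzelecki-vdm_2012}. But you have correctly located, and not closed, the real gap: a multiple covering of a smooth simple closed curve has \emph{finite} tangent-point energy (for the $k$-fold cover of a circle, $r_{\textnormal{tp}}$ equals the radius everywhere), so $\TP_q(\Gamma_\infty)<\infty$ yields no contradiction by itself. Everything therefore rests on your final step---showing that injective curves $C^1$-close to a multiple cover must have diverging energy---and this is precisely where your sketch stops. Your own diagnosis is accurate: the concentration estimate $\int\sim\eps_n^{2-q}$ needs a \emph{normal} separation of definite relative size on an arc of definite length, and a purely tangential (collinear) separation between the two sheets contributes nothing. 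You assert that a ``winding/degree-type obstruction'' supplies this, but no argument is given, and it is not obvious: two injective nearly-straight strands can pass through nearly the same point with nearly the same tangent and still be separated only tangentially to leading order.

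The actual proof in \cite{blatt-reiter_2015a} proceeds more directly, without passing to a limit curve. One shows quantitatively that whenever $|s-t|_{\R/\Z}\ge\ell_0$ and $|\Gamma(s)-\Gamma(t)|$ is small, the energy over a box around $(s,t)$ is already large, by exploiting the uniform $C^{1,\alpha}$ control on both strands together with injectivity. This direct route faces the same geometric issue you identified, but resolves it by working with the fixed injective curve rather than a degenerate limit; in particular one never has to analyse the multiply-covered configuration itself. If you want to salvage your contradiction argument, the missing ingredient is a lemma of the form: for any injective unit-speed $C^{1,\alpha}$ curve on $\R/\Z$ with H\"older seminorm $\le M$, the normal component of $\Gamma(s+\tfrac12)-\Gamma(s)$ (relative to $\Gamma'(s)$) cannot vanish identically on an interval of length $\ge\ell_0/2$ unless the curve is an exact double cover. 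Proving this with uniform constants is essentially equivalent to the direct estimate.
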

So far, we have reported on the  effects that finite tangent-point energy has
on the curve. Let us conclude this short review with continuity and regularity
properties of the energy itself.
\begin{theorem}[Regularity of the tangent-point energy]
\label{thm:reg-tanpoint}
Let $q>2$.  The tangent-point energy $\TP_q$ is sequentially
lower semicontinuous with respect to $C^1$-conver\-gence. Moreover,
$\TP_q$ is continuously differentiable on regular embedded closed curves
of fractional Sobolev regularity $W^{2-(1/q),q}$.
\end{theorem}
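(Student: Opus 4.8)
\emph{Proof sketch.}

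\emph{Lower semicontinuity.} I would argue by Fatou's lemma. Recording the tangent--point radius in the closed form
\[
\frac{1}{r_{\textnormal{tp}}(\g(s),\g(t))}
=\frac{2\,\bigl|\bigl(\g(t)-\g(s)\bigr)\wedge\g'(s)\bigr|}{\abs{\g'(s)}\,\abs{\g(t)-\g(s)}^{2}},
\]
the integrand
\[
f_\g(s,t):=\frac{\abs{\g'(s)}\abs{\g'(t)}}{r_{\textnormal{tp}}^{q}(\g(s),\g(t))}
\]
is a nonnegative rational expression in the jet $(\g(s),\g(t),\g'(s),\g'(t))$, continuous at every $(s,t)$ with $s\ne t$ and $\g(s)\ne\g(t)$. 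Hence, whenever $\g_k\to\g$ in $C^{1}$ towards a regular limit, $f_{\g_k}(s,t)\to f_\g(s,t)$ for almost every $(s,t)$, because the diagonal and the self-intersection set $\{s\ne t:\g(s)=\g(t)\}$ of a regular $C^{1}$ curve are both Lebesgue-null in $\R/\Z\times\R/\Z$. Fatou's lemma then yields
\[
\TP_q(\g)=\iintrz f_\g\le\liminf_{k\to\infty}\iintrz f_{\g_k}=\liminf_{k\to\infty}\TP_q(\g_k),
\]
which is the asserted sequential lower semicontinuity with respect to $C^{1}$-convergence.

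\emph{Continuous differentiability.} This is the substantial part, and I would follow the scheme of~\cite{blatt-reiter_2015a,wings_2018}. For $q\in(2,\infty)$ the energy space embeds as $W^{2-(1/q),q}(\R/\Z,\R^{3})\hookrightarrow C^{1,1-(2/q)}$, so the set $\mathscr O$ of regular embedded curves of class $W^{2-(1/q),q}$ is open in $W^{2-(1/q),q}$, and by Theorem~\ref{thm:energy-space} the energy is finite on $\mathscr O$. It suffices to show that the Gâteaux derivative exists on $\mathscr O$, is a bounded linear functional, and depends continuously on $\g$, since Gâteaux differentiability with continuous derivative upgrades to Fréchet differentiability of class $C^{1}$. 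For $\g\in\mathscr O$ and $h\in W^{2-(1/q),q}$ the curve $\g+\eps h$ stays in $\mathscr O$ for $\abs\eps$ small, so $\eps\mapsto f_{\g+\eps h}(s,t)$ is differentiable off the diagonal, and the plan is to differentiate under the integral sign. The crux is to dominate the difference quotients by a fixed function in $L^{1}\bigl((\R/\Z)^{2}\bigr)$, uniformly for small $\eps$.

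I would split the domain into a far region $\{\abs{s-t}_{\R/\Z}\ge\delta\}$ and a near-diagonal region $\{\abs{s-t}_{\R/\Z}<\delta\}$. On the far region the denominators $r_{\textnormal{tp}}$ stay bounded below—by embeddedness together with the uniform bi-Lipschitz bound of Lemma~\ref{lem:bilipschitz}—so the kernel and all its $\eps$-derivatives are bounded and domination is routine. On the near-diagonal region I would Taylor-expand $\g(t)-\g(s)=\g'(s)(t-s)+o(t-s)$, using $(\g(t)-\g(s))\wedge\g'(s)=\int_{s}^{t}\bigl(\g'(u)-\g'(s)\bigr)\wedge\g'(s)\,du$, and extract from $\partial_\eps f_{\g+\eps h}$ a principal part whose integrand is comparable to that of the Gagliardo seminorm $[\g']_{W^{1-(1/q),q}}$ plus a remainder with an integrable kernel; the fractional regularity $\g'\in W^{1-(1/q),q}$ supplies exactly the integrable majorant for the principal part.

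The main obstacle is precisely this near-diagonal analysis: turning the heuristic expansion into genuine pointwise bounds on the $\eps$-derivatives of the singular kernel requires the fractional (Besov-type) estimates of~\cite{blatt-reiter_2015a}, refined in~\cite{wings_2018}, expressing the majorant through the $W^{2-(1/q),q}$-data. Granting these, dominated convergence gives the Gâteaux derivative as a bounded linear functional, and the very same majorants—applied along a sequence $\g_j\to\g$ in $W^{2-(1/q),q}$—yield continuity of $\g\mapsto D\TP_q(\g)$ by a further dominated-convergence argument. This establishes the $C^{1}$-regularity of $\TP_q$ on regular embedded $W^{2-(1/q),q}$-curves.
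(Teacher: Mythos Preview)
Your sketch is correct and matches the paper's approach: the paper's own proof is merely a pair of citations---lower semicontinuity to \cite[p.~1513]{strzelecki-etal_2013a} and continuous differentiability to \cite{wings_2018} via the first-variation formula of \cite{blatt-reiter_2015a} and the bootstrapping scheme of \cite{blatt-reiter_2015b}---and your Fatou argument for the former together with the dominated-convergence\,/\,near-diagonal majorant scheme for the latter is precisely what those references carry out. The only cosmetic point is that your lower-semicontinuity argument assumes the $C^1$-limit is regular; this is the setting in which the energy is defined and the only one used in the paper, so nothing is missing.
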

\begin{proof}
Lower semicontinuity of the tangent-point energy was shown in
\cite[p.~1513]{strzelecki-etal_2013a}, whereas  continuous differentiability
was verified in \cite{wings_2018} using the first variation formula in
\cite[Theorem 1.4]{blatt-reiter_2015a} and the line of arguments 
used for the corresponding regularity statement for integral Menger curvature
in \cite[Theorem~3]{blatt-reiter_2015b}.
\end{proof}

\subsection{Isotopy stability}

Several times in the proofs we will rely on the fact
that knot classes are stable with respect to $C^1$-perturbations.
Variants of the following statement can be found in~\cite{DEJvR,reiter_2005,blatt_2009,denne-sullivan_2008}.
\begin{lemma}[Ambient isotopy is open in $C^{1}$]\label{lem:isot}
 For any embedded $\gamma\in C^{1}(\R/\ell\Z,\R^{3})$
 there is an $\eps>0$ such that any $\tilde\gamma\in C^{1}(\R/\ell\Z,\R^{3})$
 with $\norm[C^{1}]{\tilde\gamma-\gamma}<\eps$
 is also embedded and belongs to the same knot class as $\gamma$.
\end{lemma}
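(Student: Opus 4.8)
The plan is to prove the statement by exhibiting an ambient isotopy of $\R^{3}$ that carries $\gamma$ onto $\tilde\gamma$; since two embedded curves lie in the same knot class precisely when they are ambient isotopic, this is exactly what is needed. The natural candidate is the straight-line homotopy $\gamma_{u}:=(1-u)\gamma+u\tilde\gamma$ for $u\in[0,1]$, which joins $\gamma_{0}=\gamma$ to $\gamma_{1}=\tilde\gamma$. Accordingly I would split the argument into two parts: first show that, for $\eps$ small, \emph{every} $\gamma_{u}$ is an embedding (which in particular already yields embeddedness of $\tilde\gamma$), and then promote this isotopy of embeddings to an ambient isotopy.

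For the first part I would establish a uniform bi-Lipschitz bound for $\gamma$. Since $\gamma$ is an injective immersion of the \emph{compact} manifold $\R/\ell\Z$, a standard compactness argument shows that the infimum defining $\BiLip(\gamma)$ is attained and strictly positive: there is $c>0$ with
\[
\abs{\gamma(s)-\gamma(t)}\ge c\,\abs{s-t}_{\R/\ell\Z}\qquad\text{for all }s,t\in\R/\ell\Z .
\]
(Near the diagonal this uses $\abs{\gamma'}>0$ and $C^{1}$-continuity; away from the diagonal it uses injectivity and compactness of the set $\{(s,t):\abs{s-t}_{\R/\ell\Z}\ge\delta\}$.) Writing $\eta:=\tilde\gamma-\gamma$, the hypothesis $\norm[C^{1}]{\eta}<\eps$ gives both $\abs{\eta(s)-\eta(t)}\le\eps\,\abs{s-t}_{\R/\ell\Z}$ and $\abs{\eta}<\eps$ pointwise, whence for every $u\in[0,1]$
\[
\abs{\gamma_{u}(s)-\gamma_{u}(t)}\ge\abs{\gamma(s)-\gamma(t)}-u\,\abs{\eta(s)-\eta(t)}\ge(c-\eps)\,\abs{s-t}_{\R/\ell\Z}
\]
in the near-diagonal regime, while away from the diagonal $\abs{\gamma(s)-\gamma(t)}$ is bounded below by a positive constant that dominates the $2\eps$-sized perturbation. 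Combined with $\abs{\gamma_{u}'}\ge\abs{\gamma'}-\eps>0$, this proves that each $\gamma_{u}$ is an injective immersion, i.e.\ an embedding, once $\eps$ is smaller than $c$, than $\inf\abs{\gamma'}$, and than the near/far diagonal threshold.

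For the second part I would feed the $C^{1}$ family $u\mapsto\gamma_{u}$ of embeddings of the compact manifold $\R/\ell\Z$ into $\R^{3}$ into the isotopy extension theorem, obtaining a compactly supported ambient isotopy $(\Phi_{u})_{u\in[0,1]}$ of $\R^{3}$ with $\Phi_{0}=\id$ and $\Phi_{u}\circ\gamma=\gamma_{u}$; in particular $\Phi_{1}\circ\gamma=\tilde\gamma$, so $\gamma$ and $\tilde\gamma$ are ambient isotopic and share the same knot class, as in~\cite{DEJvR,reiter_2005,blatt_2009,denne-sullivan_2008}.

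I expect the main obstacle to be precisely this last step: the isotopy extension theorem is classically formulated in the $C^{\infty}$ category, whereas here the curves are only $C^{1}$. I would handle this either by citing a $C^{r}$-version for $r\ge1$, or, more self-containedly, via a tubular-neighbourhood construction: choose $\rho>0$ so small that the nearest-point projection $\pi$ onto $\gamma(\R/\ell\Z)$ is well-defined on the solid-torus neighbourhood $\{x\in\R^{3}:\dist(x,\gamma(\R/\ell\Z))<\rho\}$. For $\eps$ sufficiently small all $\gamma_{u}$ lie inside this neighbourhood, and $\pi\circ\gamma_{u}$ is a reparametrization of the core; one can then contract each $\gamma_{u}$ to the core along the normal fibres and extend the resulting motion by the identity using a radial cutoff, which produces $(\Phi_{u})$ explicitly and circumvents any appeal to smooth isotopy extension.
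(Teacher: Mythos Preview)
The paper does not actually prove this lemma; it merely states it and refers the reader to~\cite{DEJvR,reiter_2005,blatt_2009,denne-sullivan_2008} for proofs. So there is no ``paper's own proof'' to compare against, and your task reduces to supplying (or pointing to) a correct argument.

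Your sketch is essentially the standard route taken in those references: the straight-line homotopy between $\gamma$ and $\tilde\gamma$, a uniform bi-Lipschitz bound to guarantee that every intermediate curve stays embedded, and then an extension to an ambient isotopy. Two minor remarks. First, your near-/far-diagonal split is unnecessary: once you have $\abs{\gamma(s)-\gamma(t)}\ge c\,\abs{s-t}_{\R/\ell\Z}$ globally and $\abs{\eta(s)-\eta(t)}\le\eps\,\abs{s-t}_{\R/\ell\Z}$ globally (the latter from $\norm[C^0]{\eta'}<\eps$ via the mean value inequality), the single estimate $\abs{\gamma_u(s)-\gamma_u(t)}\ge(c-\eps)\,\abs{s-t}_{\R/\ell\Z}$ already holds for all $s,t$. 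Second, and more substantively, the tubular-neighbourhood workaround you propose for the $C^1$ isotopy extension is the delicate point: for a merely $C^1$ curve the nearest-point projection need not be well-defined on a full tube (that typically wants $C^{1,1}$ or positive reach). The cited notes~\cite{reiter_2005,blatt_2009} handle precisely this $C^1$ subtlety, so the cleanest fix is simply to invoke them rather than reinvent the construction; alternatively one can pass through a $C^\infty$ approximation of $\gamma$ inside its own $C^1$-neighbourhood and apply the smooth isotopy extension theorem there. With that caveat addressed, your outline is correct.
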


\section{Group action on parametrized curves}\label{sec:group}

In order to describe the dihedral symmetry of parametrized
closed curves $\g:\R/\ell\Z\to\R^3$ we use two different representations of
the dihedral group $D_2:=\{d_0\equiv e,d_1,d_2,d_3\}$
with the multiplication table depicted in Table~\ref{tab:dihedral-table},
\begin{table}
\begin{tabular}{|c||c|c|c|c|}\hline
 & $e$ & $d_1 $ & $d_2$ & $d_3$ \\
 \hline
 \hline
 $e$ & $e$ & $d_1$ & $d_2$ & $d_3$ \\
 \hline
 $d_1$ & $d_1$ & $e$ & $d_3$ & $d_2$  \\
 \hline
 $d_2$ & $d_2$ & $d_3$ & $e$ & $d_1$ \\
 \hline
 $d_3$ & $d_3$ & $d_2$ & $d_1$ & $e$ \\
 \hline
 \end{tabular}

\medskip

\caption{The multiplication table of the dihedral group $D_2$.}
\label{tab:dihedral-table}

\end{table}
where $e$ denotes 
the identity element.  Namely, in view of the symmetry of the curves'
images we consider
the subgroup $\{\Id_{\R^3}\equiv R_0,R_1,R_2,R_3\}\subset SO(3)$
containing the rotations $R_i$ about the coordinate axes $\R\mathbf{e_{\boldsymbol{i}}}$
for $i=1,2,3,$ with rotational angle $\pi$, that is, written as matrices
with respect to the standard coordinate basis $\{\mathbf{e_1},
\mathbf{e_2},
\mathbf{e_3}\}\subset\R^3$,
\begin{equation}\label{eq:rotations}
R_1:=\begin{pmatrix}
  1 & 0 & 0 \\
  0 & -1 & 0 \\
  0 & 0 & -1
  \end{pmatrix},\,\,
R_2:=\begin{pmatrix}
  -1 & 0 & 0 \\
  0 & 1 & 0 \\
  0 & 0 & -1
  \end{pmatrix},\,\,
R_3:=\begin{pmatrix}
  -1 & 0 & 0 \\
  0 & -1 & 0 \\
  0 & 0 & 1 
  \end{pmatrix}.
\end{equation}
To take into account  the curves'  parametrizations, we use
in addition  the 
mappings $\psi_i^\ell:\R/\ell\Z\to \R/\ell\Z$ on the periodic
domain $\R/\ell\Z$, defined
as
\begin{equation}\label{eq:inner-action}
\psi^\ell_i(t):=
\begin{cases}
t\pmod\ell & \Fo i=0,\\
-t+\frac{\ell}2\pmod\ell & \Fo i=1,\\
t-\frac{\ell}2 \pmod\ell & \Fo i=2,\\
-t+\ell \pmod\ell & \Fo i=3.
\end{cases}
\end{equation}
It is easy to check that for all mutually distinct $i,j,k\in\{1,2,3\}$
the following identities hold.
\begin{align}
R_i\circ R_i=\Id_{\R^3} & \quad\AND\quad 
\psi^\ell_i\circ\psi^\ell_i=\Id_{\R/\ell\Z},
\label{eq:group-property1}
\\
R_i\circ R_j= R_k & \quad\AND\quad \psi^\ell_i\circ\psi^\ell_j=\psi^\ell_k,
\label{eq:group-property2}
\\
R_i|_{\R\mathbf{e_{\boldsymbol{i}}}}=\Id_{\R\mathbf{e_{\boldsymbol{i}}}} & \quad\AND\quad
  R_i|_{\R\mathbf{e_{\boldsymbol{k}}}}=R_j|_{\R\mathbf{e_{\boldsymbol{k}}}}
  =-\Id|_{\R\mathbf{e_{\boldsymbol{k}}}}.  \label{eq:group-property3}
\end{align}
Now we define how $D_2$ acts on the Banach space $C^0(\R/\Z,\R^3)$
of continuously parametrized closed curves (equipped with the
norm $\norm[C^0]{\cdot}$).
\begin{definition}\label{def:group-action}
Let $\tau^\ell:D_2\times C^0(\R/\ell\Z,{\R^{3}})\to C^0(\R/\ell\Z,\R^{3})$,
  mapping
$(d_i,\g)\mapsto\tau^\ell_{d_i}(\g)$ for $d_i\in D_2$, $i=0,1,2,3,$ and $\g\in
C^0(\R/\ell\Z,\R^{3})$, be given by
\begin{equation}\label{eq:group-action}
\tau^\ell_{d_i}(\g)(t):=R_i\circ\g\big(\psi^\ell_i(t)\big)\quad\Fo t\in\R/\ell\Z,\,\, i=0,1,2,3.
\end{equation}
\end{definition}
\begin{lemma}[$C^0(\R/\ell\Z,\R^{3})$ is a smooth $D_2$-space]
\label{lem:group-action-C0}
The mapping $\tau^\ell$ acts on $C^0(\R/\ell\Z,{\R^{3}})$, and under this action
$C^0(\R/\ell\Z,{\R^{3}})$ becomes a smooth $D_2$-space.
\end{lemma}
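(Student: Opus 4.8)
The plan is to verify directly the two requirements from Definition~\ref{def:symmetry} that turn $C^0(\R/\ell\Z,\R^3)$ into a smooth $D_2$-space: first, that $\tau^\ell$ is a representation of $D_2$, i.e.\ that the action identity~\eqref{eq:homo} holds, and second, that each $\tau^\ell_{d_i}$ is a linear $C^\infty$-diffeomorphism of the Banach space $C^0(\R/\ell\Z,\R^3)$. Since $D_2$ is finite, no joint smoothness of $\tau^\ell$ on $D_2\times C^0$ needs to be checked, so the whole verification reduces to algebraic bookkeeping with the constituent maps $R_i$ and $\psi^\ell_i$ and their properties recorded in~\eqref{eq:group-property1}--\eqref{eq:group-property2}.

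First I would record the elementary properties of the building blocks. For each $i$ the rotation $R_i\in SO(3)$ is a linear isometry of $\R^3$, and each $\psi^\ell_i$ is a smooth (affine) bijection of $\R/\ell\Z$, so by~\eqref{eq:group-action} the curve $\tau^\ell_{d_i}(\g)=R_i\circ\g\circ\psi^\ell_i$ is again continuous and $\ell$-periodic; thus $\tau^\ell_{d_i}$ maps $C^0(\R/\ell\Z,\R^3)$ into itself. Linearity in $\g$ is immediate from the linearity of $R_i$, and the isometry property of $R_i$ together with the bijectivity of $\psi^\ell_i$ yields $\norm[C^0]{\tau^\ell_{d_i}(\g)}=\norm[C^0]{\g}$, so that $\tau^\ell_{d_i}$ is a bounded linear operator and hence of class $C^\infty$.

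The heart of the argument is the action identity. Evaluating at $t\in\R/\ell\Z$ and unwinding~\eqref{eq:group-action} twice gives, for mutually distinct $i,j,k\in\{1,2,3\}$,
\[
\tau^\ell_{d_i}\big(\tau^\ell_{d_j}(\g)\big)(t)=R_i\circ R_j\circ\g\big(\psi^\ell_j(\psi^\ell_i(t))\big)=R_i\circ R_j\circ\g\big((\psi^\ell_j\circ\psi^\ell_i)(t)\big),
\]
and I would then apply~\eqref{eq:group-property2} to both factors. The one point that requires care is the \emph{order reversal}: the domain maps compose in the opposite order to the image rotations, so the identity one needs is $\psi^\ell_j\circ\psi^\ell_i=\psi^\ell_k$ rather than $\psi^\ell_i\circ\psi^\ell_j=\psi^\ell_k$. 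This is harmless here because the hypothesis~\eqref{eq:group-property2} is symmetric in $i$ and $j$ (both being the two indices distinct from $k$), equivalently because the $\psi^\ell_i$ are pairwise commuting involutions realizing the abelian group $D_2$. Combined with $R_i\circ R_j=R_k$ this shows $\tau^\ell_{d_i}\circ\tau^\ell_{d_j}=\tau^\ell_{d_i\circ d_j}$; the remaining cases $i=j$ and $i=0$ or $j=0$ follow directly from~\eqref{eq:group-property1} and the definitions $\psi^\ell_0=\Id_{\R/\ell\Z}$, $R_0=\Id_{\R^3}$. This establishes~\eqref{eq:homo}.

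Finally I would deduce the diffeomorphism property: since $d_i\circ d_i=e$, the action identity gives $\tau^\ell_{d_i}\circ\tau^\ell_{d_i}=\tau^\ell_e=\Id$, so each $\tau^\ell_{d_i}$ is a bounded linear involution and hence a linear (in particular $C^\infty$-) automorphism of $C^0(\R/\ell\Z,\R^3)$ with inverse $\tau^\ell_{d_i}$ itself. Assembling these facts with Definition~\ref{def:symmetry}~(ii) shows that $C^0(\R/\ell\Z,\R^3)$ is a $G$-manifold of class $C^\infty$ on which the $\tau^\ell_{d_i}$ act linearly, i.e.\ a smooth $D_2$-space. I expect the only genuine subtlety to be the order-reversal bookkeeping in the action identity; everything else is routine.
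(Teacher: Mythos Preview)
Your proof is correct and follows essentially the same route as the paper's: both verify directly that $\tau^\ell_{d_i}(\g)\in C^0$, establish the homomorphism property~\eqref{eq:homo} by unwinding~\eqref{eq:group-action} and invoking~\eqref{eq:group-property1}--\eqref{eq:group-property2}, and then observe linearity of each $\tau^\ell_{d_i}$. Your explicit remark on the order reversal (that one needs $\psi^\ell_j\circ\psi^\ell_i=\psi^\ell_k$, which is fine since $D_2$ is abelian) is a point the paper handles tacitly, and your deduction of the diffeomorphism property from the involution identity is a small addition the paper leaves implicit.
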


\begin{proof}
It is obvious that $\tau^\ell_{d_i}(\g)\in C^0(\R/\ell\Z,\R^3)$ for each
$i=0,1,2,3$, and $\g\in C^0(\R/\ell\Z,\R^3)$,
since any rotation in the image and affine linear transformation of
the periodic 
domain neither changes the $C^{0}$-regularity nor the $\ell$-periodicity.

According to the multiplication table of the group $D_2$ (see Table
\ref{tab:dihedral-table}) and by the properties 
\eqref{eq:group-property1}--\eqref{eq:group-property3}
we have 
\[
\tau^{\ell}_{d_i\circ d_i}(\g)(t)\overset{\textnormal{Table~\ref{tab:dihedral-table}}}{=}
\tau_e^{{\ell}}(\g)(t)\overset{\eqref{eq:group-action},\eqref{eq:inner-action}}{=}\g(t)\Foa t\in\R/\ell\Z,\,i=0,1,2,3.
\]
On the other hand, again by \eqref{eq:group-action} and \eqref{eq:group-property1},
\begin{align*}
\tau^{\ell}_{d_i}\big(\tau^{\ell}_{d_i}(\g)\big)(t) &
\overset{\eqref{eq:group-action}}{=}R_i\circ \big(\tau^{\ell}_{d_i}(\g)(\cdot)\big)
(\psi_i^{\ell}(t))\\
& \overset{\eqref{eq:group-action}}{=} 
R_i\circ R_i\circ\gamma(\psi_i^{\ell}(\psi^{\ell}_i(t)))
\overset{\eqref{eq:group-property1}}{=}\g(t)\Foa t\in\R/\ell\Z,\,i=0,1,2,3,
\end{align*}
which proves the homomorphism property \eqref{eq:homo} for identical
group elements in $D_2$. For $i\not= j=0$ there is nothing to prove
since $j=0$ corresponds to the identity elements in the respective
representations of $D_2$. For $i\not= j$, $i,j\in\{1,2,3\}$ we use
the multiplication rules in Table~\ref{tab:dihedral-table} and the 
definition \eqref{eq:group-action}
to find, on the one hand, 
\begin{equation}\label{eq:one-hand}
\tau^{\ell}_{d_i\circ d_j}(\g)(t)\overset{\textnormal{Table~\ref{tab:dihedral-table}}}{=}\tau^{\ell}_{d_k}(\g)
\overset{\eqref{eq:group-action}}{=}R_k\circ\g(\psi_k^{\ell}(t)),
\end{equation}
whereas  \eqref{eq:group-action}, as well as
\eqref{eq:group-property2}
lead to
\begin{align*}
\tau^{\ell}_{d_i}\big(\tau^{\ell}_{d_j}(\g)\big)(t)
& \overset{\eqref{eq:group-action}}{=}
\tau^{\ell}_{d_i}\big(R_j\circ\g(\psi_j^{\ell}(\cdot))\big)(t)\\
& \overset{\eqref{eq:group-action}}{=}
R_i\circ R_j\circ\g(\psi_{ j}^{\ell}(\psi_{ i}^{\ell}(t)))
\overset{\eqref{eq:group-property2}}{=}
R_k\circ\g(\psi_k^{\ell}(t)),
\end{align*}
which equals the expression in \eqref{eq:one-hand}. We have shown so far
that $\tau$ is indeed a representation of $D_2$ on  $C^0(\R/\Z,\R^3)$. 
Since 
$
\tau^{\ell}_{d_i}(\sigma\g+\eta)(t)  = R_i\circ (\sigma\g +\eta)(\psi_i^{\ell}(t))
 = \sigma R_i\circ\g(\psi_i^{\ell}(t))+ R_i\circ\eta (\psi_i^{\ell}(t))
 = 
\sigma\tau^{\ell}_{d_i}(\g)+\tau^{\ell}_{d_i}(\eta)
$
for all $\g,\eta\in C^0(\R/\ell\Z,\R^3)$ and  $\sigma\in\R$,  one finds that
 $\tau^{\ell}_{d_i}:C^0(\R/\ell\Z,\R^3)\to C^0(\R/\ell\Z,\R^3)$ is
  linear for all $i=0,1,2,3,$ so that
 the Banach space $C^0(\R/\ell\Z,\R^3)$ is indeed a $D_2$-space 
 in the sense of Definition 
~\ref{def:symmetry}, part (ii).
\end{proof}
Symmetric curves are of particular interest here, {which are defined as follows.}
 
\begin{definition}[Dihedrally symmetric curves]\label{def:dihedral}
 A curve is called \emph{dihedrally symmetric} or \emph{$D_{2}$-symmetric} if it belongs to the \emph{$D_2$-symmetric set}
\begin{equation}\label{eq:Sigma}
\Sigma^\ell:=\{ \g\in C^0(\R/\ell\Z,\R^3): 0<\mathscr{L}(\g)<\infty,\,
\tau^{\ell}_{d_i}(\g)=\g\,\,\textnormal{for all $d_i\in D_2$}\}.
\end{equation}
\end{definition}

{Now we 
provide a method to systematically construct examples
of $D_2$-symmetric  curves (of finite length). For that we}
glue copies of an  open or closed arc $\alpha:[0,\ell/4]\to\R^3$ of finite
length together to obtain
a mapping $g:[0,\ell)\to\R^3$ as
\begin{equation}\label{eq:glueing}
g(t):=\begin{cases}
\alpha(t) & \Fo t\in [0,\ell/4),\\
R_1\circ \alpha(\psi^\ell_1(t)) & \Fo t\in [\ell/4,\ell/2),\\
R_2\circ\alpha(\psi^\ell_2(t)) & \Fo t\in [\ell/2,3\ell/4),\\
R_3\circ \alpha(\psi^\ell_3(t)) & \Fo t\in [3\ell/4, \ell),
\end{cases}
\end{equation}
and investigate first under which circumstances this glueing process
leads to a closed curve with a certain regularity. Notice that $\psi_i^\ell(
[i\ell/4,(i+1)\ell/4))=(0,\ell/4]$ for $i=1,3,$ and $\psi_2^\ell([\ell/2,
3\ell/4))=[0,\ell/4)$ by definition of the $\psi_j^\ell$ in
\eqref{eq:inner-action}, so that $g$ in \eqref{eq:glueing} is well-defined.
{It turns out that this construction does not only produce 
examples of $D_2$-symmetric curves but also \emph{characterizes} 
this symmetry\footnote{{Fixing the rotational axes with $\eqref{eq:rotations}$ and the
corresponding parameter transformations in \eqref{eq:inner-action} enforces
a rigidity on the class {of} $D_2$-symmetric curves, which is reflected
in the statements of Proposition \ref{prop:character-symmetry} and
Corollary \ref{cor:circle} below.}}.
}

\begin{proposition}\label{prop:character-symmetry}
{{\rm (i)\,}
$\g\in\Sigma^\ell$ if and only if there exists an arc
$\alpha\in C^0([0,\ell/4],\R^3)$ of
positive and finite length satisfying
\begin{equation}\label{eq:point-constraints}
\alpha(0)\in\R\mathbf{e_3}\quad\AND\quad \alpha(\ell/4)\in\R\mathbf{e_1},
\end{equation}
such that $\g$ coincides with the $\ell$-periodic extension of $g$ defined
in \eqref{eq:glueing}.
In particular, the points $\g(0)$ and $\g(\ell/2)$ are contained in $\R\mathbf{e_3}$,
whereas $\g(\ell/4)$ and $\g(3\ell/4)$ are contained in $\R\mathbf{e_1}$.}

{
{\rm (ii)\, }
$\g\in\Sigma^\ell\cap W^{1,p}(\R/\ell\Z,\R^3)$ for some $p\in [1,\infty]$
 if and only if $\g=g$, where 
$\alpha$ is of class $W^{1,p}$ and satisfies \eqref{eq:point-constraints}.
}

{
{\rm (iii)\,} 
$\g\in\Sigma^\ell\cap C^1(\R/\ell\Z,\R^3)$ if and only if $\g =g$ {in the sense of~\textup{(i)}}, where
$\alpha\in C^1([0,\ell/4],\R^3)$ satisfies \eqref{eq:point-constraints} and 
\begin{equation}\label{eq:tangent-constraints}
\alpha'(0)\in\spann\{\mathbf{e_1},\mathbf{e_2}\}\quad\AND\quad
\alpha'(\ell/4)\in\spann\{\mathbf{e_2},\mathbf{e_3}\}.
\end{equation}
Moreover, $\g\in\Sigma^\ell\cap W^{2,p}(\R/\ell\Z,\R^3)$ for some $p\in [1,\infty]$
 if and only if, in addition to the above properties,  $\alpha$ is of class $W^{2,p}$.
 }

{
{\rm (iv)\,}
Let $S_i$ be the reflection in the coordinate plane $\mathbf{e_{\boldsymbol{i}}^\perp}$ for  $i=1,2,3.$ Then, 
$S_i(\Sigma^\ell)\subset\Sigma^\ell$, $S_i(\Sigma^\ell\cap C^1)\subset
\Sigma^\ell\cap C^1$, and $S_i(\Sigma^\ell\cap W^{k,p})
\subset \Sigma^\ell\cap W^{k,p}$ for $p\in [1,\infty]$, $k=1,2$.}
\end{proposition}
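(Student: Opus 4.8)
The plan is to make part~(i) the backbone and deduce~(ii)--(iv) as local refinements or a short algebraic corollary. For the forward implication in~(i), given $\g\in\Sigma^\ell$ I set $\alpha:=\g|_{[0,\ell/4]}$; the symmetry relation $\tau^\ell_{d_i}(\g)=\g$ reads $\g(t)=R_i\circ\g(\psi^\ell_i(t))$, and since $\psi^\ell_i$ maps the $i$-th quarter into $[0,\ell/4]$ (as noted right before the proposition) this is precisely $\g(t)=R_i\circ\alpha(\psi^\ell_i(t))=g(t)$ on each quarter, so $\g$ is the periodic extension of $g$. The endpoint constraints~\eqref{eq:point-constraints} drop out of a fixed-point analysis: $\psi^\ell_3$ fixes $t=0$ and $\psi^\ell_1$ fixes $t=\ell/4$, so the relations there give $R_3\g(0)=\g(0)$ and $R_1\g(\ell/4)=\g(\ell/4)$, placing $\g(0)$ and $\g(\ell/4)$ on the fixed axes $\R\mathbf{e_3}$ and $\R\mathbf{e_1}$ by~\eqref{eq:group-property3}.

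For the converse of~(i) I would first verify continuity of the periodic extension at the four junctions $\ell/4,\ell/2,3\ell/4,\ell$: the matchings follow from~\eqref{eq:point-constraints} together with~\eqref{eq:group-property3}, e.g.\ at $\ell/2$ both one-sided limits equal $R_1\alpha(0)=R_2\alpha(0)=-\alpha(0)$ because $\alpha(0)\in\R\mathbf{e_3}$, and at $\ell/4$ continuity uses $R_1\alpha(\ell/4)=\alpha(\ell/4)$ since $\alpha(\ell/4)\in\R\mathbf{e_1}$. The invariance $\tau^\ell_{d_j}(g)=g$ I would then check on the interior of each quarter and extend by continuity. The efficient way is to observe that $\psi^\ell_j$ permutes the four quarter-intervals exactly as left multiplication by $d_j$ permutes $\{e,d_1,d_2,d_3\}$; for $t$ in quarter~$i$ the point $\psi^\ell_j(t)$ then lies in the quarter indexed by $d_j d_i$, and one computation with the group laws~\eqref{eq:group-property1}--\eqref{eq:group-property2}, using that $D_2$ is abelian with every element self-inverse, collapses the two rotations and the two parameter maps to give $R_j\circ g(\psi^\ell_j(t))=R_i\circ\alpha(\psi^\ell_i(t))=g(t)$. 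Positivity and finiteness of the length are immediate from $\mathscr{L}(g)=4\,\mathscr{L}(\alpha)$.

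Parts~(ii) and~(iii) are local statements at the junctions, and in both directions regularity transfers freely because the building blocks $R_i,\psi^\ell_i$ are smooth with $(\psi^\ell_i)'=\pm1$, while restriction to $[0,\ell/4]$ recovers $\alpha$ with the same regularity. For~(ii) a continuous, piecewise-$W^{1,p}$ function with matching junction values is globally $W^{1,p}$, which is exactly the situation produced by~(i). For~(iii) the new ingredient is the matching of one-sided derivatives: writing $g'(t)=(\psi^\ell_i)'(t)\,R_i\alpha'(\psi^\ell_i(t))$ on quarter~$i$, the $C^1$-condition at $t=0$ becomes $\alpha'(0)=-R_3\alpha'(0)$ and at $t=\ell/4$ becomes $\alpha'(\ell/4)=-R_1\alpha'(\ell/4)$, i.e.\ $\alpha'(0)$ and $\alpha'(\ell/4)$ lie in the $(-1)$-eigenspaces $\spann\{\mathbf{e_1},\mathbf{e_2}\}$ of $R_3$ and $\spann\{\mathbf{e_2},\mathbf{e_3}\}$ of $R_1$; a check at the remaining junctions $\ell/2$ and $3\ell/4$ reproduces these same two conditions, which are exactly~\eqref{eq:tangent-constraints}. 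The $W^{2,p}$-addendum then follows by applying the $W^{1,p}$ argument of~(ii) to $g'$, whose pieces now match continuously precisely because of~\eqref{eq:tangent-constraints}.

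Part~(iv) I would dispatch algebraically. Writing each reflection $S_i$ as the diagonal matrix negating the $i$-th coordinate, one has $R_j=-S_j$, whence $R_jS_iR_j=S_jS_iS_j=S_i$ since diagonal matrices commute and $S_j^2=\Id$. Therefore, if $\g\in\Sigma^\ell$ so that $\g(\psi^\ell_j(t))=R_j\g(t)$, then $R_j\,(S_i\circ\g)(\psi^\ell_j(t))=R_jS_iR_j\,\g(t)=S_i\,\g(t)$, proving $S_i\circ\g\in\Sigma^\ell$; as $S_i$ is a linear isometry it preserves length and each regularity class, yielding all three inclusions at once. The one genuinely delicate point in the whole proof is the global verification of $\tau^\ell_{d_j}(g)=g$ in the converse of~(i): without the bookkeeping of how $\psi^\ell_j$ permutes the quarters it degenerates into many case distinctions, so I expect the main effort to go into packaging that invariance cleanly via the $D_2$-multiplication structure.
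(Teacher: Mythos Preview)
Your proposal is correct and follows the same skeleton as the paper: restrict $\gamma$ to the first quarter to obtain $\alpha$, reconstruct via the glueing~\eqref{eq:glueing}, and analyze the junctions for continuity, $C^1$-matching, and Sobolev glueing. The paper modularizes this into three auxiliary results (Lemma~\ref{lem:glueing}, Corollary~\ref{cor:glueing}, Lemma~\ref{lem:D2-symmetric-curves}) and then assembles them, but the content is the same.

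Two packaging differences are worth recording. First, in the converse of~(i) the paper verifies $\tau^\ell_{d_j}(g)=g$ by an explicit case-by-case computation for $j=1$ on each of the four quarters (Lemma~\ref{lem:D2-symmetric-curves}), declaring $j=2,3$ analogous; your observation that $\psi^\ell_j$ permutes the quarters according to left multiplication by $d_j$ in $D_2$, together with the self-inverse abelian structure, compresses all twelve checks into a single line and is a genuine economy. Second, for part~(iv) the paper argues via the characterization just established: it notes that the endpoint constraints~\eqref{eq:point-constraints} and tangent constraints~\eqref{eq:tangent-constraints} on the generating arc $\alpha$ are invariant under each $S_i$, so $S_i\circ\alpha$ is again an admissible generating arc. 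Your direct route through the conjugation identity $R_jS_iR_j=S_i$ bypasses the characterization entirely and works at the level of the symmetry relation itself; this is shorter and conceptually cleaner, though the paper's argument has the minor advantage of making the invariance of the boundary data explicit.
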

{The proof of this proposition will follow from the following partial 
results.}

\begin{lemma}[Glueing produces closed curves]
\label{lem:glueing}
Suppose $\alpha\in C^0([0,\ell/4],\R^3)$ has length $\mathscr{L}(\alpha)\in (0,\infty)$, 
then the mapping $g$ defined according to
\eqref{eq:glueing} has length $\mathscr{L}(g)=
4\mathscr{L}(\alpha)$. Moreover, $g$ is closed and  continuous, 
i.e., of class $C^0(\R/\ell\Z,\R^3)$ 
if and only if \eqref{eq:point-constraints}.
Finally, {if $\alpha$  is continuously differentiable,} 
the curve $g$ is of class $C^1(\R/\ell\Z,\R^3)$ if and only if in addition
to  \eqref{eq:point-constraints} the tangents of $\alpha$ satisfy
\eqref{eq:tangent-constraints}.
\end{lemma}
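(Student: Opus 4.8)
The plan is to reduce everything to a direct comparison of the four branches of \eqref{eq:glueing} at the three interior junction points $t=\ell/4$, $\ell/2$, $3\ell/4$ and, for closedness, at $t\equiv 0$. On each open subinterval $g$ is the composition of the continuous (resp.\ $C^1$) map $\alpha$ with the rigid motion $R_i$ and the affine map $\psi_i^\ell$, hence it inherits the regularity of $\alpha$ there, and only the matching across the junctions remains. The length claim is the easy part and is independent of these matching conditions: on $[i\ell/4,(i+1)\ell/4]$ the curve equals $R_i\circ\alpha\circ\psi_i^\ell$, and since $R_i$ is an isometry of $\R^3$ while $\psi_i^\ell$ is an affine bijection of slope $\pm1$ onto $[0,\ell/4]$, this piece is an isometric reparametrization of $\alpha$ and carries length $\mathscr{L}(\alpha)$; adding the four pieces gives $\mathscr{L}(g)=4\mathscr{L}(\alpha)$.

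For the $C^0$/closedness statement I would compute the relevant one-sided limits from \eqref{eq:inner-action}. Since $\psi_1^\ell(\ell/4)=\ell/4$, matching at $t=\ell/4$ demands $\alpha(\ell/4)=R_1\alpha(\ell/4)$. At $t=\ell/2$ the left branch tends to $R_1\alpha(0)$ (as $\psi_1^\ell(t)\to0^+$) and the right branch to $R_2\alpha(0)$, giving $R_1\alpha(0)=R_2\alpha(0)$; at $t=3\ell/4$ one gets $R_2\alpha(\ell/4)=R_3\alpha(\ell/4)$; and closedness at $t\equiv0$ requires $\alpha(0)=R_3\alpha(0)$. Applying the group relations \eqref{eq:group-property2}, each identity reduces to a fixed-point condition $R_jv=v$ with $v\in\{\alpha(0),\alpha(\ell/4)\}$, and by the fixed-axis property \eqref{eq:group-property3} this holds exactly when $v$ lies on the axis $\R\mathbf{e_j}$. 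The four conditions collapse to the two requirements $\alpha(0)\in\R\mathbf{e_3}$ and $\alpha(\ell/4)\in\R\mathbf{e_1}$, which is \eqref{eq:point-constraints}.

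For the $C^1$ statement I would differentiate each branch, picking up the chain-rule factor $(\psi_i^\ell)'\equiv\pm1$, and again compare one-sided derivatives at the four junctions. The matching conditions now take the form $R_jv=-v$, the extra sign $-1$ reflecting that the two branches meeting at each junction always carry opposite slopes $(\psi_i^\ell)'=\pm1$; by \eqref{eq:group-property3} this means $v$ lies in the $(-1)$-eigenspace $\mathbf{e_j}^{\perp}$ of $R_j$. Tracking the indices, the junctions at $\ell/4$ and $3\ell/4$ both force $\alpha'(\ell/4)\in\spann\{\mathbf{e_2},\mathbf{e_3}\}$, while $t=\ell/2$ and $t\equiv0$ both force $\alpha'(0)\in\spann\{\mathbf{e_1},\mathbf{e_2}\}$, which is precisely \eqref{eq:tangent-constraints}.

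The computations are routine; the one place that needs care — and which I regard as the main obstacle to a clean write-up — is the bookkeeping of orientations. At each junction one must correctly identify which endpoint ($0$ or $\ell/4$) the argument $\psi_i^\ell(t)$ approaches and from which side, and carry the right sign of $(\psi_i^\ell)'$, so that the one-sided limits and derivatives are evaluated at the correct point. A useful consistency check is that the conditions obtained at the ``mirror'' junctions $3\ell/4$ and $0\equiv\ell$ are redundant with those at $\ell/4$ and $\ell/2$; this redundancy is forced by the relations \eqref{eq:group-property1}--\eqref{eq:group-property2} and confirms that no further constraints on $\alpha$ arise.
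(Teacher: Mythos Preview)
Your proposal is correct and follows essentially the same approach as the paper: both compute the one-sided limits (and, for $C^1$, one-sided derivatives) at the four junction parameters, reduce the matching conditions to equations of the form $R_j v=v$ (respectively $R_j v=-v$) via the group relations \eqref{eq:group-property1}--\eqref{eq:group-property2}, and then invoke the eigenspace description \eqref{eq:group-property3} to identify these with \eqref{eq:point-constraints} and \eqref{eq:tangent-constraints}. Your observation that the conditions at $3\ell/4$ and $0\equiv\ell$ are redundant with those at $\ell/4$ and $\ell/2$ is exactly the paper's remark that \eqref{eq:pt-cond2} and \eqref{eq:tan-cond2} yield the same constraints as \eqref{eq:pt-cond1} and \eqref{eq:tan-cond1}.
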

\begin{proof}
Since $\alpha$ is continuous one has $\mathscr{L}_{[0,\ell/4)}(g)=
\mathscr{L}(\alpha)$ (see, e.g., 
\cite[VIII, Section 5, Theorem 1, p. 223]{natanson_2016}), 
and because the rotated images of $\alpha$ have the same length as
$\alpha$, the statement
about $\mathscr{L}(g)$ is immediate. 

{The $\ell$-periodic extension of}
the piecewise defined curve $g$ is continuous  if and only if the
following four identities hold true:
\begin{align}
g(0)=\lim_{t\nearrow \ell}R_3\circ \alpha(-t+\ell),\quad & g(\ell/4)=\lim_{t\nearrow \ell/4}\alpha(t),\label{eq:pt-cond1}\\
g(\ell/2)=\lim_{t\nearrow \ell/2}R_1\circ \alpha (-t+\ell/2),\quad & g(3\ell/4)=\lim_{t\nearrow 3\ell/4}R_2\circ \alpha(t-\ell/2),\label{eq:pt-cond2}
\end{align}
where we have already plugged in   the definition \eqref{eq:glueing} in the respective limits
on the right-hand sides. Using the continuity of $\alpha$ on the right-hand side we obtain
the  conditions $\alpha(0)=R_3\circ \alpha(0)$ and $\alpha(\ell/4)=R_1\circ \alpha(\ell/4)$ that are equivalent to \eqref{eq:pt-cond1}, and with the help of \eqref{eq:group-property1} and
\eqref{eq:group-property2} exactly the same conditions equivalent to
\eqref{eq:pt-cond2}. 
{From~\eqref{eq:group-property3} we infer
$\ker\br{\Id-R_{i}}=\R\mathbf{e_{\boldsymbol{i}}}$} for $i=1,2,3$,
{which} implies that these
identities on the endpoints $\alpha(0)$ and $\alpha(\ell/4)$ are equivalent to \eqref{eq:point-constraints}. 

{Provided that $\alpha$ is continuosly differentiable},
$C^1$-regularity of $g$ is equivalent to the pointwise conditions \eqref{eq:pt-cond1}
and \eqref{eq:pt-cond2} in combination with the tangential conditions
\begin{align}
g'(0)=\lim_{t\nearrow \ell}R_3\circ \big(- \alpha'(-t+{\ell})\big),\quad & g'(\ell/4)=\lim_{t\nearrow \ell/4}\alpha'(t),\label{eq:tan-cond1}\\
g'(\ell/2)=\lim_{t\nearrow \ell/2}R_1\circ \big(- \alpha' (-t+\ell/2)\big),\quad & g'(3\ell/4)=\lim_{t\nearrow 3\ell/4}R_2\circ \alpha'(t-\ell/2),\label{eq:tan-cond2}
\end{align}
where the minus signs  in the respective left equations are a 
consequence of the chain rule. Now by continuity of $\alpha'$ we obtain from \eqref{eq:tan-cond1}
the equivalent conditions $\alpha'(0)=-R_3\circ \alpha'(0)$ and $\alpha'(\ell/4)=-R_1\circ \alpha'(\ell/4)$.
{From $\ker\br{\Id+R_{i}}=\spann\set{\mathbf{e_{\boldsymbol{j}}},\mathbf{e_{\boldsymbol{k}}}}$ we deduce that they are}
equivalent to~\eqref{eq:tangent-constraints}. 
Exploiting~\eqref{eq:tan-cond2} again with the help of~\eqref{eq:group-property1}
and~\eqref{eq:group-property2} leads to the same conditions on $\alpha'(0)$ and
$\alpha'(\ell/4)$.
\end{proof}

From the Morrey--Sobolev embedding in one dimension together with 
well-known glueing properties for Sobolev functions 
\cite[E3.6 \& E3.7]{alt_2016} one readily obtains the following corollary.
\begin{corollary}[Glueing Sobolev arcs]
\label{cor:glueing}
If $\alpha\in W^{1,p}((0,\ell/4),\R^3)$ for any $p\in [1,\infty]$, 
then $g$ defined in \eqref{eq:glueing}
is a closed curve of class $W^{1,p}(\R/\ell\Z,\R^3)$ if and only if
the continuous representative of $\alpha$ satisfies \eqref{eq:point-constraints}.
If $\alpha\in W^{2,p}((0,\ell/4),\R^3)$, then $g$ is a closed curve of
class $W^{2,p}(\R/\ell\Z,\R^3)$ if and only if
the $C^1$-represen\-ta\-tive of $\alpha$ satisfies
\eqref{eq:point-constraints} and \eqref{eq:tangent-constraints}.
\end{corollary}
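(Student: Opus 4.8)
The plan is to upgrade the $C^0$- and $C^1$-gluing criteria already established in Lemma~\ref{lem:glueing} to Sobolev regularity by combining two standard ingredients: the one-dimensional Morrey--Sobolev embeddings $W^{1,p}((0,\ell/4),\R^3)\hookrightarrow C^0([0,\ell/4],\R^3)$ and $W^{2,p}((0,\ell/4),\R^3)\hookrightarrow C^1([0,\ell/4],\R^3)$, valid for every $p\in[1,\infty]$, together with the gluing characterization for Sobolev functions on adjacent intervals in \cite[E3.6 \& E3.7]{alt_2016}. The latter states that a map which is $W^{k,p}$ on each piece of a finite partition of $\R/\ell\Z$ is globally $W^{k,p}$ if and only if its representatives agree at the interfaces up to order $k-1$: for $k=1$ this is plain continuity, and for $k=2$ it is the matching of both values and first derivatives, i.e.\ $C^1$-regularity across the junctions. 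The whole point is that Lemma~\ref{lem:glueing} has already translated exactly these matching conditions into the pointwise constraints \eqref{eq:point-constraints} and \eqref{eq:tangent-constraints} on the endpoints of $\alpha$.

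For the $W^{1,p}$-statement I would first record that each rotation $R_i\in SO(3)$ is a fixed linear isometry of $\R^3$ and that each $\psi_i^\ell$ from \eqref{eq:inner-action} is affine with slope $\pm1$; hence composing $\alpha$ with these operations preserves membership in $W^{1,p}$ by the usual change-of-variables rule for Sobolev functions. Consequently each of the four pieces in \eqref{eq:glueing} lies in $W^{1,p}$ on its subinterval $[i\ell/4,(i+1)\ell/4)$ and, by Morrey, admits a continuous representative. The gluing criterion then shows that $g\in W^{1,p}(\R/\ell\Z,\R^3)$ exactly when these continuous representatives agree at the four junctions $0,\ell/4,\ell/2,3\ell/4$ of the circle $\R/\ell\Z$ (the junction $0\equiv\ell$ included). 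This is precisely continuity of the $\ell$-periodic extension of $g$, which by Lemma~\ref{lem:glueing} is equivalent to the continuous representative of $\alpha$ satisfying \eqref{eq:point-constraints}.

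The $W^{2,p}$-statement proceeds along the same lines with $k=2$. Morrey now furnishes $C^1$-representatives of $\alpha$ and of each piece, and $R_i$ together with $\psi_i^\ell$ again preserve $W^{2,p}$. The gluing criterion requires matching of both the $C^1$-representatives and their first derivatives at the four junctions, which is exactly $C^1$-regularity of the closed curve $g$. By the second part of Lemma~\ref{lem:glueing}, this is equivalent to the $C^1$-representative of $\alpha$ satisfying \eqref{eq:point-constraints} together with \eqref{eq:tangent-constraints}, completing the argument.

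Since the substantive content lives in Lemma~\ref{lem:glueing}, there is no genuinely hard step here; the only point needing care, rather than being purely routine, is the bookkeeping at the interfaces on the \emph{closed} domain $\R/\ell\Z$. One must treat the periodicity junction $t=0\equiv\ell$ on the same footing as the three interior junctions, and one must track the minus signs in the derivative matching produced by those $\psi_i^\ell$ that reverse orientation. These sign contributions are, however, already absorbed into the tangential matching conditions analyzed in the proof of Lemma~\ref{lem:glueing}, so no new computation is required: the corollary is obtained simply by reading that analysis through the Sobolev gluing criterion.
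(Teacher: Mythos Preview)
Your proposal is correct and follows exactly the route the paper indicates: the paper's own proof is the single sentence preceding the corollary, namely that the result follows from the one-dimensional Morrey--Sobolev embedding together with the Sobolev gluing criteria \cite[E3.6 \& E3.7]{alt_2016}, reducing everything to Lemma~\ref{lem:glueing}. Your write-up simply unpacks this in more detail, including the (correct) observations that the affine reparametrizations $\psi_i^\ell$ and rotations $R_i$ preserve $W^{k,p}$ and that the periodic junction must be handled alongside the interior ones.
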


Now we are in the position to prove that the constructed curve $g$ in 
\eqref{eq:glueing} is also $D_2$-symmetric.
\begin{lemma}[$D_2$-symmetric curves]
\label{lem:D2-symmetric-curves}
If  $\alpha\in C^0([0,\ell/4],\R^3)$ has finite and positive length and satisfies
\eqref{eq:point-constraints}, then the curve 
$g$ defined in \eqref{eq:glueing} is $D_2$-symmetric, that is, $g\in\Sigma^\ell$.
If $\alpha$ is of class $W^{1,p}$ for some $p\in [1,\infty]$, then so is the $D_2$-symmetric
curve  $g$, and 
if $\alpha\in C^1([0,\ell/4],\R^3)$ or if $\alpha$ is of class $W^{2,p},$ and satisfies \eqref{eq:tangent-constraints} in addition, 
then $g$ is a $D_2$-symmetric closed curve of class $C^1$, or $W^{2,p},$
respectively.
\end{lemma}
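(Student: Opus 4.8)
The plan is to separate two essentially independent tasks: establishing the required regularity of $g$, which is already available from the preceding results, and verifying the $D_2$-invariance, which is the actual content of the lemma.

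First I would dispose of everything except the invariance by quoting the earlier statements. By Lemma~\ref{lem:glueing}, the point constraints \eqref{eq:point-constraints} guarantee that $g$ is a closed curve of class $C^0(\R/\ell\Z,\R^3)$ with $\mathscr{L}(g)=4\mathscr{L}(\alpha)\in(0,\infty)$; in particular the length is positive and finite, so the only remaining requirement for $g\in\Sigma^\ell$ is the symmetry $\tau^\ell_{d_i}(g)=g$ for all $i$. For the higher-regularity assertions I would invoke Corollary~\ref{cor:glueing}: under \eqref{eq:point-constraints} the curve $g$ inherits $W^{1,p}$-regularity from $\alpha$, and under the additional tangent constraints \eqref{eq:tangent-constraints} it is of class $C^1$ (resp.\ $W^{2,p}$) whenever $\alpha$ is. Since $\Sigma^\ell$ is defined at the $C^0$-level, the very same symmetry computation then places $g$ in $\Sigma^\ell\cap C^1$, $\Sigma^\ell\cap W^{1,p}$, or $\Sigma^\ell\cap W^{2,p}$, as appropriate; no separate symmetry argument is needed for the regular cases.

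The heart of the proof is thus to show $\tau^\ell_{d_j}(g)=g$ for $j=1,2,3$ (the case $j=0$ being trivial). I would write the glueing formula uniformly as $g(t)=R_i\circ\alpha\big(\psi^\ell_i(t)\big)$ on the quarter $Q_i:=[i\ell/4,(i+1)\ell/4)$, $i=0,1,2,3$ (with $R_0=\Id$, $\psi^\ell_0=\id$), which is legitimate because $\psi^\ell_i(Q_i)\subset[0,\ell/4]$. The key bookkeeping step is to check, directly from the definitions \eqref{eq:inner-action}, that $\psi^\ell_j$ permutes the quarters exactly according to left multiplication in $D_2$: it maps $Q_i$ into $Q_m$ with $d_m=d_j\circ d_i$. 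Granting this, fix $t\in Q_i$ and set $m$ as above; then $\psi^\ell_j(t)\in Q_m$, so
\begin{equation*}
\tau^\ell_{d_j}(g)(t)=R_j\circ g\big(\psi^\ell_j(t)\big)=R_j\circ R_m\circ\alpha\big(\psi^\ell_m(\psi^\ell_j(t))\big).
\end{equation*}
Since $d_m=d_j\circ d_i$ and every element of $D_2$ is its own inverse, we have $d_i=d_j\circ d_m$, whence \eqref{eq:group-property1}--\eqref{eq:group-property2} (which encode the group multiplication on both the $R_i$ and the $\psi^\ell_i$) yield simultaneously $R_j\circ R_m=R_i$ and $\psi^\ell_m\circ\psi^\ell_j=\psi^\ell_i$. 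Substituting gives $R_i\circ\alpha(\psi^\ell_i(t))=g(t)$, which is the desired identity.

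The step I expect to require the most care is the permutation claim, i.e.\ matching the geometric action of $\psi^\ell_j$ on the four subintervals with the abstract product $d_m=d_j\circ d_i$, while tracking the half-open intervals and the $\pmod\ell$ reductions in \eqref{eq:inner-action} correctly (for instance, $\psi^\ell_1$ swaps $Q_0\leftrightarrow Q_1$ and $Q_2\leftrightarrow Q_3$, consistently with $d_1\circ d_0=d_1$ and $d_1\circ d_2=d_3$). Once the indices are aligned, the invariance is a one-line consequence of the group identities, and the boundary points of the quarters cause no difficulty because continuity of $g$ has already been secured via Lemma~\ref{lem:glueing}. This organization renders the regularity upgrades automatic and isolates all the combinatorial work in the single permutation verification.
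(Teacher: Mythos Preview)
Your proposal is correct and follows essentially the same approach as the paper: both dispose of regularity by quoting Lemma~\ref{lem:glueing} and Corollary~\ref{cor:glueing}, then verify $\tau^\ell_{d_j}(g)=g$ by tracking how $\psi^\ell_j$ moves between the four quarter-intervals and invoking the group identities \eqref{eq:group-property1}--\eqref{eq:group-property2}, with endpoint issues handled by continuity. The only difference is organizational: the paper fixes $j=1$ and walks through the four quarters explicitly, whereas you compress this into the single permutation claim $\psi^\ell_j(Q_i)\subset \overline{Q_m}$ with $d_m=d_j\circ d_i$, which yields all $j$ at once.
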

\begin{proof}
The regularity statements follow from Lemma~\ref{lem:glueing} and Corollary
\ref{cor:glueing}, so it suffices to prove $D_2$-symmetry,  for which we merely need to
show that 
\begin{equation}\label{eq:symmetry-check}
\tau^\ell_{d_i}(g)=g\quad\Fo i=1,2,3.
\end{equation}
We  only treat the case $i=1$ in full detail, the cases $i=2,3$ are very similar.

For $t\in [0,\ell/4)$ we have $\psi_1^{\ell}(t)\in (\ell/4,\ell/2]$, and therefore
by definition of $g$
\begin{align*}
\tau^\ell_{d_1}(g)(t) & \overset{\eqref{eq:group-action}}{=} R_1\circ 
g(\psi^\ell_1(t))
\overset{\eqref{eq:glueing}}{=}
R_1\circ R_1\circ \alpha(\psi^\ell_1\circ\psi^\ell_1(t))\overset{\eqref{eq:group-property1}}{=}
\alpha(t)=g(t).
\end{align*}
Notice that we have also used the continuity of $g$ 
established in Lemma~\ref{lem:glueing} to treat the parameter $t=0$, since $\psi_1^\ell(0)=\ell/2$, so that
\[ g(\psi_1^\ell(0))=g(\ell/2)=\lim_{t\nearrow\ell/2}
g(t)\stackrel{\eqref{eq:glueing}}=\lim_{t\nearrow\ell/2}R_1\circ \alpha(\psi_1^\ell(t))=R_1\circ \alpha(0). \]

For $t\in [\ell/4,\ell/2)$ one has $\psi_1^\ell(t)\in (0,\ell/4]$, so that
\[
\tau^\ell_{d_1}(g)(t)\overset{\eqref{eq:group-action}}{=}
R_1\circ g(\psi^\ell_1(t))\overset{\eqref{eq:glueing}}{=}
R_1\circ \alpha(\psi^\ell_1(t))\overset{\eqref{eq:glueing}}{=}g(t),
\]
where again, we have used the continuity of $g$ to treat $t=\ell/4$ 
by means of $g(\psi^\ell_1(\ell/4))=g(\ell/4)=\lim_{t\nearrow \ell/4}g(t) $
similarly as
above.

For $t\in [\ell/2,3\ell/4)$ we have $\psi_{1}^\ell(t)\in (-\ell/4,0]\equiv (3\ell/4,\ell]
\pmod \ell$, so that
\[
\tau^{\ell}_{d_1}(g)(t)\overset{\eqref{eq:group-action}}{=}
R_1\circ g(\psi^\ell_1(t))\overset{\eqref{eq:glueing}}{=}
R_1\circ R_3\circ \alpha(\psi^\ell_3\circ\psi^\ell_1(t))
\overset{\eqref{eq:group-property2}}{=}
R_2\circ \alpha(\psi^\ell_2(t))\overset{\eqref{eq:glueing}}{=}
g(t),
\]
where the parameter $t=\ell/2$ was treated as before via
$g(\psi^\ell_1(\ell/2))=g(0)=\lim_{t\nearrow\ell}g(t)$.
Finally, for $t\in [3\ell/4,\ell)$, one has $\psi^\ell_1(t)\in (-\ell/2,-\ell/4]\equiv (\ell/2,3\ell/4]\pmod\ell$, so that
\[
\tau^\ell_{d_1}(g)(t)\overset{\eqref{eq:group-action}}{=}
R_1\circ g(\psi^\ell_1(t))\overset{\eqref{eq:glueing}}{=}
R_1\circ R_2\circ \alpha (\psi^\ell_2\circ\psi^\ell_1(t))
\overset{\eqref{eq:group-property2}}{=}
R_3\circ \alpha(\psi^\ell_3(t))
\overset{\eqref{eq:glueing}}{=}g(t),
\]
the parameter $t=3\ell/4$ treated by continuity of $g$ via
$g(\psi^\ell_1(3\ell/4))=g(-\ell/4)=g(3\ell/4)=\lim_{t\nearrow 3\ell/4}g(t)$
by $\ell$-periodicity of $g$. 

Thus, we have finished the detailed argument for $i=1$. The case $i=3$ is
analogous, and $i=2$ is somewhat simpler, since there is no inversion in the
domain which saves us the additional continuity argument to treat the respective
boundary parameters.
\end{proof}

Now we can present the

\begin{proof}[Proof of Proposition \ref{prop:character-symmetry}.]
For part (i) assume that $\g\in\Sigma^\ell$ so  $\g=R_i\circ
\g(\psi_i^\ell(\cdot))$ for $i=0,1,2,3$. {Let} $\alpha:=\g|_{[0,\ell/4]}$
and define $g$ according to~\eqref{eq:glueing}. Then
\[
g(t)=R_j\circ\alpha(\psi_j^\ell(t))=R_j\circ\g(\psi_j^\ell(t))=\g(t)\Fo t\in [j\ell/4,(j+1)\ell/4),
\,j=0,1,2,3.
\]
The other implication in part (i) and also parts (ii) and (iii)
follow from Lemmata~\ref{lem:glueing} and \ref{lem:D2-symmetric-curves},  and Corollary \ref{cor:glueing}. Finally, part (iv) follows from the characterizations of
$\Sigma^\ell$, $\Sigma^\ell\cap C^1$ and $\Sigma^\ell\cap W^{k,p}$ for $k=1,2$
in terms of the generating arc $\alpha$, established
in the previous parts (i)--(iii). These characterizations
can be combined with the fact that the  conditions 
\eqref{eq:point-constraints} and     \eqref{eq:tangent-constraints}
on  $\alpha$ are invariant under the reflections $S_i$ for $i=1,2,3$.
\end{proof}

We  use the 
glueing mechanism \eqref{eq:glueing} to construct a few explicit examples of dihedrally
symmetric curves parametrized on $\R/\ell\Z$.
\begin{example}\label{ex:one-circle}
\upshape
As a first generating arc $\alpha_1\in C^\infty([0,\ell/4],\R^3)$ we choose
a quadrant {(i.e., {one} quarter of a circle)}
in the $\mathbf{e_1}$-$\mathbf{e_3}$-plane with arclength $\ell/4$, that is,
\begin{equation}\label{eq:generating-quartercircle}
\alpha_1(t):=\frac\ell{2\pi}\begin{pmatrix}
\sin(2\pi t/\ell) \\
0\\
\cos(2\pi t/\ell)\end{pmatrix}\quad\Fo t\in [0,\ell/4],
\end{equation}
so that $\alpha:=\alpha_1$ 
satisfies the conditions \eqref{eq:point-constraints},
\eqref{eq:tangent-constraints}, and the regularity assumptions of Lemma 
\ref{lem:glueing}, Corollary~\ref{cor:glueing}, and 
Lemma~\ref{lem:D2-symmetric-curves}. According to these results the 
curve $g\equiv g_1$ defined in \eqref{eq:glueing} 
for this particular choice of
$\alpha=\alpha_1$ is a $C^{1,1}$-closed and $D_2$-symmetric curve, that is,
$g_1\in\Sigma^\ell\cap C^{1,1}(\R/\ell\Z,\R^3)$.
It is easy
to check that $g_1$ is the once covered circle whose parametrization equals
\eqref{eq:generating-quartercircle} if one extends the domain of the latter
to all of $[0,\ell]$, so $g_1$ is actually {$C^{\infty}$} on $\R/\ell\Z$; see 
Figure~\ref{fig:tpc_pi}~{(A)}.
\end{example}

\begin{figure}[!t]\tiny
\begin{tabular}{cccc}
\includegraphics[scale=.45,trim=90 110 60 70,clip]{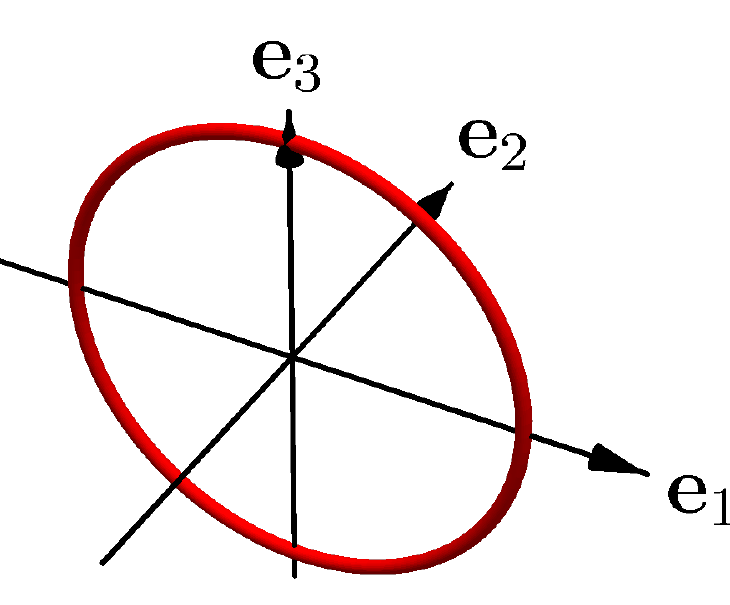}&
\includegraphics[scale=.45,trim=90 110 30 70,clip]{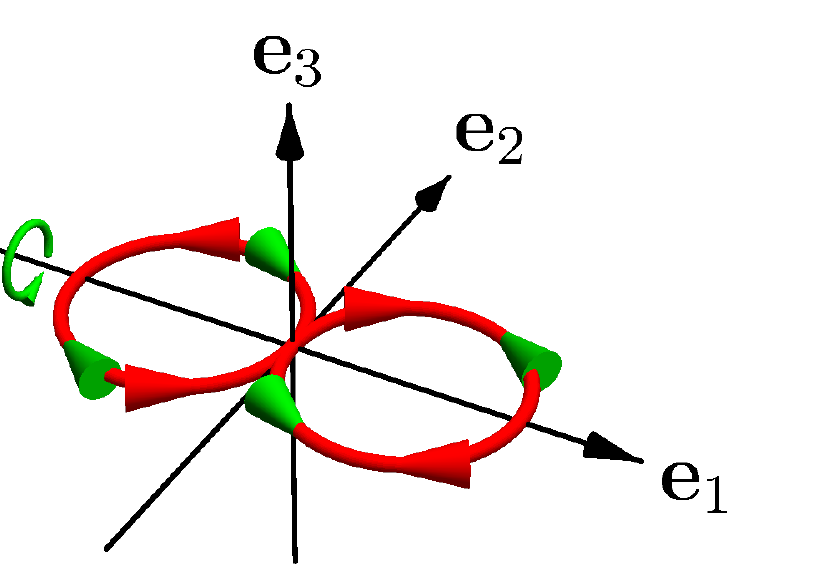}&
\includegraphics[scale=.45,trim=90 110 60 70,clip]{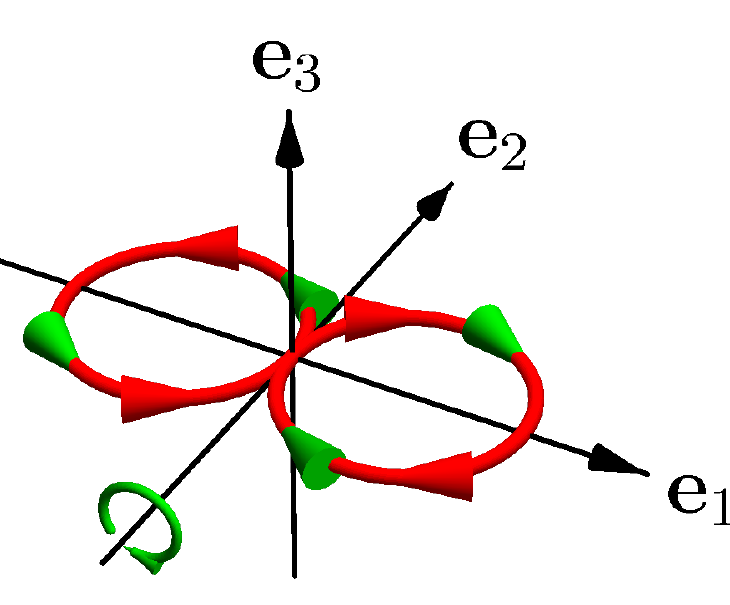}&
\includegraphics[scale=.45,trim=90 110 60 50,clip]{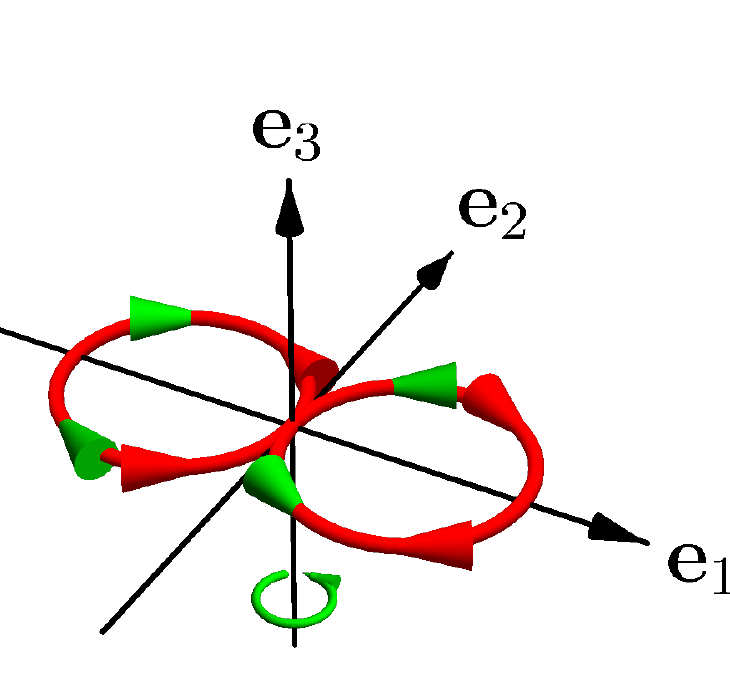}
\\(A)&(B)&(C)&(D)
\end{tabular}
\caption{\label{fig:tpc_pi}
$D_2$-symmetric curves generated by~\eqref{eq:glueing}.
(A) The once covered circle {in the $\mathbf{e_1}$-$\mathbf{e_3}$-plane }
from Example~\ref{ex:one-circle}.
(B)--(D) A tangential pair of co-planar circles
exhibiting symmetries with respect to the coordinate axes
as described in Example~\ref{ex:tpc_pi}.
{$180$-degree rotations about the coordinate axes $\R\mathbf{e_{\boldsymbol{i}}}$ 
indicated in green are accompanied
by parameter transformations $\psi_i^\ell$ that reverse orientation for $i=1,3$ in 
(B) and (D), while
preserving it for $i=2$ {in}~(C); see red and green arrows along the curves.}
}
\end{figure}

\begin{example}\label{ex:tpc_pi}
\upshape
We construct a dihedrally symmetric
tangential pair of co-planar circles with exactly one self-intersection point
within the $\mathbf{e_1}$-$\mathbf{e_2}$-plane; see  Figure~\ref{fig:tpc_pi}~{(B)--(D)}.
For that we take as a generating arc $\alpha_2\in C^\infty([0,\ell/4],\R^3)$
a semicircle of radius $\ell/(4\pi)$. To be more precise, we set
\begin{equation}\label{eq:generating-semicircle}
\alpha_2(t):=\frac\ell{4\pi}
\begin{pmatrix}
1-\cos(4\pi t/\ell)\\
\sin(4\pi t/\ell)\\
0\end{pmatrix}\quad\Fo t\in [0,\ell/4],
\end{equation}
and easily check that conditions \eqref{eq:point-constraints} 
and \eqref{eq:tangent-constraints} 
as well as the regularity assumptions of Lemma~\ref{lem:glueing},
Corollary~\ref{cor:glueing}, and Lemma~\ref{lem:D2-symmetric-curves} are satisfied
for $\alpha:=\alpha_2$.
Glueing according to \eqref{eq:glueing} yields the $D_2$-symmetric
 tangential pair of co-planar
circles with parametrization $g\equiv g_2\in\Sigma^\ell\cap C^{1,1}(\R/\ell\Z,\R^3)$,
which in the case $\ell=1$
we also denote by $\tpc[\pi]$. This curve has the same trace
as the corresponding curve in 
\cite[Formula (3.2) for $\varphi:=\pi$]{gerlach-etal_2017}, only with reversed
orientation.

In a similar manner, we may construct a $\tpc[\pi]$-curve in the $\mathbf{e_{2}}$-$\mathbf{e_{3}}$-plane by letting
\begin{equation}\label{eq:generating-semicircle2}
\alpha_2(t):=\frac\ell{4\pi}
\begin{pmatrix}
0\\
\sin(4\pi t/\ell)\\
1+\cos(4\pi t/\ell)
\end{pmatrix}\quad\Fo t\in [0,\ell/4].
\end{equation}
{We will show in Corollary~\ref{cor:circle}
that \eqref{eq:generating-semicircle}
and \eqref{eq:generating-semicircle2} are the \emph{only two options}
for the generating arc to construct $D_2$-symmetric tangential pairs of circles
$\tpc[\pi]$.}
\end{example}

Our third example produces dihedrally symmetric torus knots 
of class $\mathcal{T}(2,b)$
for any odd $b\in\Z\setminus\{1,-1\}$; see Figure~\ref{fig:constrB}.

\begin{example}\label{ex:D2-symmetric-torus-knot}\upshape
The generating curve $\alpha_3$ consists of a helical part $h$, 
which after the glueing
forms together with its rotated copies a rational tangle 
that determines the knot
class \cite[Section 2.3]{adams_2004}, 
and a piece $\sigma$ of a stadium curve, 
which after the glueing
closes the tangle to form the knot; see Figure~\ref{fig:constrB}.
\begin{figure}
 \includegraphics[scale=.3]{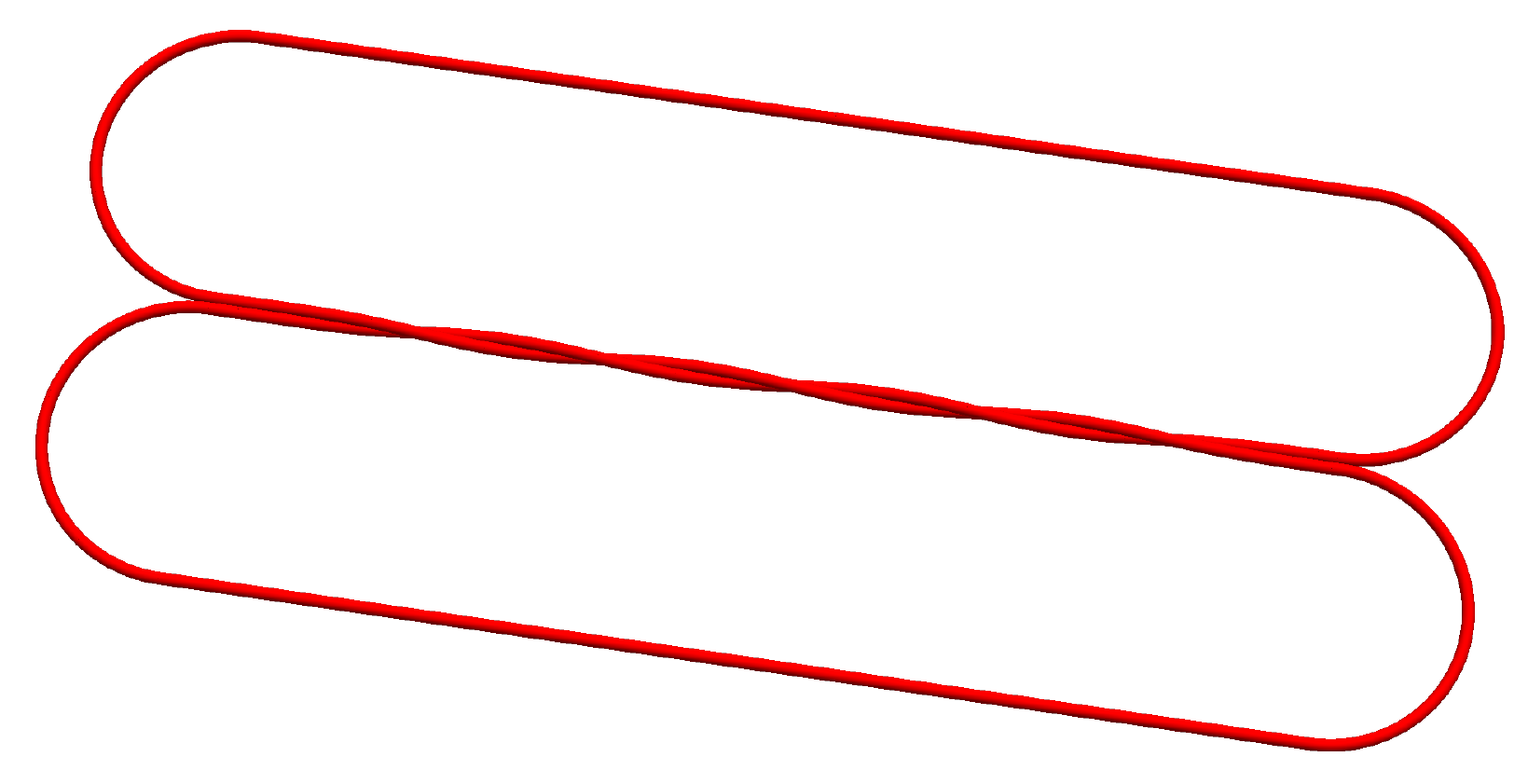}
  \includegraphics[scale=.4]{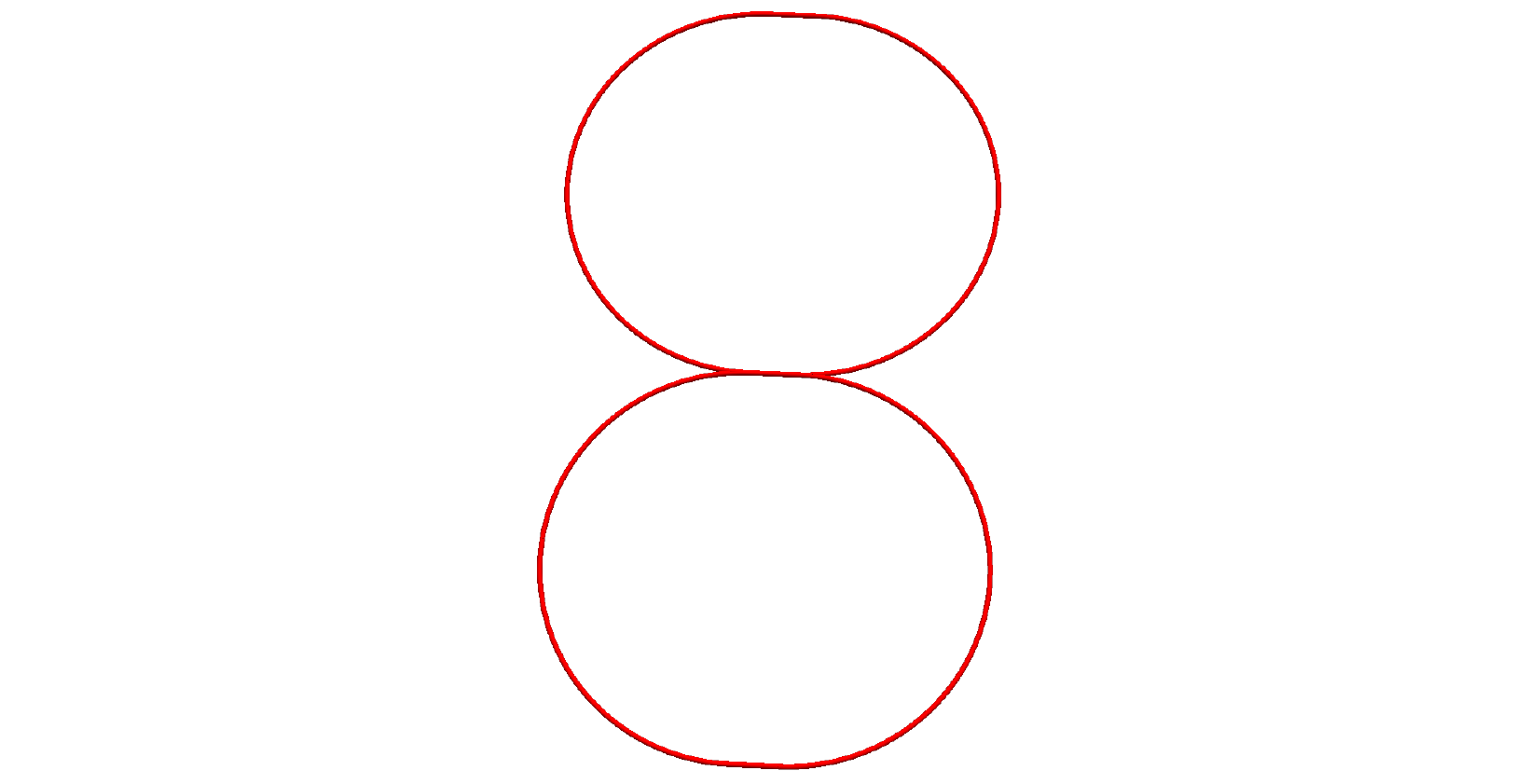}
 \caption{Dihedrally symmetric torus knots of class
$\TL(2,5)$ constructed in Example~\ref{ex:D2-symmetric-torus-knot}
 for $\epsilon=0.03$ (left) and for $\epsilon=0.003$
 (right).}\label{fig:constrB}
  \end{figure}
   For the precise formulas,
which we are going to take up again in Section~\ref{sec:example}
to compute the infimal
bending energy on $D_2$-symmetric torus
knots,
{it suffices to consider a fixed odd integer $b\ge 3$.
Indeed, the reflection of a $D_2$-symmetric
representative of the torus knot class
$\mathcal{T}(2,b)$ in a coordinate plane $\mathbf{e_{\boldsymbol{i}}}$ for $i=1,2,3$,
produces a representative of $\mathcal{T}(2,-b)$ that still
possesses the $D_2$-symmetry
 according to part (iv) of
Proposition
\ref{prop:character-symmetry}.  We also fix}
two parameters $\epsilon,\rho>0$,
and define 
the helical part $h=h^\epsilon$ {of the  generating arc} as 
\begin{equation}\label{eq:helical-part}
h^\epsilon(t):= \begin{pmatrix}
\rho(-1)^{(b-1)/2}\sin\phi_\epsilon(t)\\
t\\
\rho\cos\phi_\epsilon(t)
\end{pmatrix}\quad\Fo t\in [0,\infty),
\end{equation}
where $\phi_\epsilon(t):=\pi\cdot\phi(t/\epsilon)$ for the piecewise smooth
parameter transformation $\phi\in
C^{1,1}([0,\infty))$ {given by}
\begin{equation}\label{eq:phi}
\phi(t):=\begin{cases}
t & \Fo t\in\left[0,\frac{b-1}2\right],\\
\frac{b}2 -\frac12 \left(t-\frac{b+1}2\right)^2 & \Fo t\in \left[\frac{b-1}2,
\frac{b+1}2\right],\\
\frac{b}2 & \Fo t\ge\frac{b+1}2.
\end{cases}
\end{equation}
Note that
\begin{equation}\label{eq:h^eps}
h^{\eps}(\tfrac{(b+1)\eps}2)
=\begin{pmatrix}
\rho(-1)^{(b-1)/2}\sin\frac{b\pi}2 \\
\tfrac{(b+1)\eps}2\\
0
\end{pmatrix}
=\begin{pmatrix}
\rho \\
\tfrac{(b+1)\eps}2\\
0
\end{pmatrix}.
\end{equation}
The portion $\sigma=\sigma^\epsilon$ of a stadium curve of class $C^{1,1}$
consists of a semicircle of
radius $r=r(\epsilon)$ satisfying 
\begin{equation}\label{eq:r-condition}
(b+1)\epsilon+\pi r=\ell/4,
\end{equation}
and
a (short) straight segment attached in a $C^1$-manner to the 
semicircle. The precise definition is
\begin{equation}\label{eq:stadium}
\sigma^\epsilon(t):=\begin{cases}
\begin{pmatrix}
\rho +r -r\cos\big(\frac1r (t-(b+1)\epsilon/2)\big)\\
(b+1)\epsilon/2+r\sin\big(\frac1r (t-(b+1)\epsilon/2)\big)\\
0
\end{pmatrix} & \Fo t\in \left[\frac{(b+1)\epsilon}2,\frac{\ell}4 -
\frac{(b+1)\epsilon}2\right],\\ & \\
\begin{pmatrix}
\rho +2r\\
(\ell/4)-t\\
0
\end{pmatrix} & \Fo t\in \left[\frac{\ell}4 -\frac{(b+1)\epsilon}2,
\frac{\ell}4\right].
\end{cases}
\end{equation}
The generating arc $\alpha^\epsilon_3$ for 
fixed $\epsilon>0$ {satisfying~\eqref{eq:r-condition}} is now defined as
\begin{equation}\label{eq:generating-torus-knot}
\alpha^\epsilon_3(t):=\begin{cases}
h^\epsilon(t) & \Fo t\in\left[0,\frac{b+1}2\cdot\epsilon\right], \\
\sigma^\epsilon(t) & \Fo t\in \left(\frac{b+1}2\cdot\epsilon,\frac{\ell}4\right],
\end{cases}
\end{equation}
and one easily checks with the help of~\eqref{eq:r-condition}
and~\eqref{eq:h^eps}
that $\alpha_3^\epsilon$ itself is of class
$C^{1,1}$ on $[0,\ell/4]$. Moreover,
$\alpha_3^\epsilon(0)=h^\epsilon(0)=(0,0,\rho)^T\in\R\mathbf{e_3}$
and $\alpha_3^\epsilon(\ell/4)=(\rho + 2r,0,0)^T\in\R\mathbf{e_1}$
as required in \eqref{eq:point-constraints} of Lemma~\ref{lem:glueing}.
Finally, $(\alpha_3^\epsilon)'(0)=(\rho(-1)^{(b-1)/2}\pi/\epsilon,1,0)^T\in
\spann\{\mathbf{e_1},\mathbf{e_2}\}$ and $(\alpha_3^\epsilon)'(\ell/4)=
(0,-1,0)^T\in\spann\{\mathbf{e_2},\mathbf{e_3}\}$, so that also
\eqref{eq:tangent-constraints} is satisfied. Therefore, all assumptions
of Lemma~\ref{lem:glueing}, Corollary~\ref{cor:glueing}, and
Lemma~\ref{lem:D2-symmetric-curves} are satisfied, so that the
glueing \eqref{eq:glueing} yields a $D_2$-symmetric torus knot $g^\epsilon_3
\in \Sigma^\ell\cap C^{1,1}(\R/\ell\Z,\R^3)$ representing the
knot class $\mathcal{T}(2,b)$.

We establish in Section~\ref{sec:example}
the strong $W^{2,2}$-convergence of these torus
knots $g^\epsilon_3$ for $\ell=1$ to the tangential pair of 
co-planar circles constructed in Example~\ref{ex:tpc_pi}, i.e., to  $\tpc[\pi]$ as $\epsilon\to 0.$
\end{example}

In light of the above examples, it is in order
to ask about the location of the dihedrally symmetric curves
appearing in the main theorems, namely the round circle
and the tangential pair of circles $\tpc[\pi]$.

\begin{corollary}[$D_{2}$-symmetric circle and ${\tpc[\pi]}$]\label{cor:circle}
 Up to reparametrization there are a unique $D_{2}$-symmetric circle and precisely two $D_{2}$-symmetric $\tpc[\pi]$-curves
 (which are of course isometric).
 \begin{enumerate}
 \item Any $D_{2}$-symmetric circle $c:\R/\Z\to\R^{3}$ is centered at the origin
 and contained in the plane perpendicular to~$\mathbf{e_{2}}$
 with initial point $c(0)\in\R\mathbf{e_3}$.
 \item A $D_{2}$-symmetric $\tpc[\pi]:\R/\Z\to\R^{3}$
 is located in {one of the two coordinate planes $\mathbf{e_1^\perp}$
 and $\mathbf{e_3^\perp}$.}
 The self-intersection point is at the origin and its
 tangent is parallel to $\mathbf{e_{2}}$ in both cases.
 \end{enumerate}
\end{corollary}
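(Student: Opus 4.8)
The plan is to reduce both statements to the generating-arc description of Proposition~\ref{prop:character-symmetry} and then exploit the rigidity built into the fixed choice of rotation axes in~\eqref{eq:rotations}. Throughout I work with $\ell=1$ and write $\alpha:=\g|_{[0,1/4]}$ for the generating arc, abbreviating $\psi_i:=\psi_i^1$. The only nontrivial input beyond planar geometry will be the invariance $\g\circ\psi_i=R_i\circ\g$, which is just the defining condition $\tau^1_{d_i}(\g)=\g$ rewritten via $R_i^2=\Id$.

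For the circle in part~(1), observe first that the image $\g(\R/\Z)$ is invariant under each $R_i$, so its center---being intrinsically determined by the circle---is a common fixed point of $R_1,R_2,R_3$. Since $\ker(\Id-R_i)=\R\mathbf{e_i}$ by~\eqref{eq:group-property3}, the only such point is the origin; hence the circle is centered at $0$ with radius $1/(2\pi)$. Its plane passes through $0$ and must be $R_i$-invariant for every $i$, which forces its unit normal to be a common eigenvector of the diagonal matrices $R_1,R_2,R_3$, i.e.\ a coordinate axis. By Proposition~\ref{prop:character-symmetry}(i) the circle contains the nonzero points $\g(0)\in\R\mathbf{e_3}$ and $\g(1/4)\in\R\mathbf{e_1}$, and the only coordinate plane containing both directions is $\mathbf{e_2^\perp}=\spann\{\mathbf{e_1},\mathbf{e_3}\}$. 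This determines the trace completely; existence is furnished by Example~\ref{ex:one-circle}, and the remaining freedom in initial point and orientation is exactly reparametrization.

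For $\tpc[\pi]$ in part~(2), the arc $\alpha$ has arclength $1/4$, i.e.\ half the circumference $1/2$ of either constituent circle, so $\alpha$ is a semicircle of radius $1/(4\pi)$ whose endpoints $\alpha(0)\in\R\mathbf{e_3}$ and $\alpha(1/4)\in\R\mathbf{e_1}$ are antipodal. The unique self-intersection point $p_0$ is carried to a self-intersection point by each $R_i$, so as before $p_0=0$. The crucial step is locating the two parameters $t_0<t_1=t_0+\tfrac12$ with $\g(t_0)=\g(t_1)=0$: applying $\g\circ\psi_i=R_i\circ\g$ at $t_0$ gives $\g(\psi_i(t_0))=R_i\,0=0$, whence $\psi_i(t_0)\in\{t_0,t_1\}$ for every $i$. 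Using $\psi_1(t)=-t+\tfrac12$ from~\eqref{eq:inner-action}, the two resulting congruences $-t_0+\tfrac12\equiv t_0$ and $-t_0+\tfrac12\equiv t_0+\tfrac12 \pmod 1$ yield $t_0\in\{\tfrac14,\tfrac34\}$ or $t_0\in\{0,\tfrac12\}$, so that $\{t_0,t_1\}=\{0,\tfrac12\}$ or $\{\tfrac14,\tfrac34\}$.

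These are precisely the two claimed cases. In the first, $\g(0)=0$ forces $\alpha(0)=0$, so the semicircle has center $m=\tfrac12\alpha(1/4)\in\R\mathbf{e_1}$; since $\alpha'(0)\perp(m-\alpha(0))\parallel\mathbf{e_1}$ while $\alpha'(0)\in\spann\{\mathbf{e_1},\mathbf{e_2}\}$ by~\eqref{eq:tangent-constraints}, the tangent at $p_0$ is parallel to $\mathbf{e_2}$ and the semicircle---hence the whole curve---lies in $\spann\{\mathbf{e_1},\mathbf{e_2}\}=\mathbf{e_3^\perp}$. The second case is symmetric, with $\alpha(1/4)=0$, center on $\R\mathbf{e_3}$, tangent again parallel to $\mathbf{e_2}$, and plane $\mathbf{e_1^\perp}$; the two generating arcs are exactly~\eqref{eq:generating-semicircle} and~\eqref{eq:generating-semicircle2}. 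In each case the sign ambiguity in the antipodal endpoint only interchanges the two circles and leaves the trace unchanged, so up to reparametrization there are precisely two such curves, realized in Example~\ref{ex:tpc_pi} and manifestly isometric. The main obstacle is pinning down the self-intersection parameters; once the generating arc is identified everything else is elementary planar geometry, and it is the orientation-reversing map $\psi_1$ that supplies the needed rigidity.
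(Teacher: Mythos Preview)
Your proof is correct. Part~(i) matches the paper closely. For part~(ii), however, you take a genuinely different route: the paper exploits an intrinsic symmetry of $\tpc[\pi]$, namely that the $180^\circ$-rotation $R_\nu$ about the tangent line at the double point satisfies $\tpc[\pi](t+\tfrac12)=R_\nu\circ\tpc[\pi](t)$, and compares this with the $D_2$-relation $\tpc[\pi](t+\tfrac12)=R_2\circ\tpc[\pi](t)$ to force $R_\nu^{-1}R_2=\Id_{\R^3}$ and hence $\nu=\mathbf{e_2}$; the containing plane then follows from Proposition~\ref{prop:character-symmetry}(i). You instead locate the self-intersection parameters directly from $\psi_1(t_0)\in\{t_0,t_1\}$ and read off the tangent direction and the plane from the semicircle geometry of the generating arc combined with~\eqref{eq:tangent-constraints}. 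Your approach is more elementary and stays entirely within the generating-arc framework of Proposition~\ref{prop:character-symmetry}; the paper's argument is shorter once one spots the extra $R_\nu$-symmetry.

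Two minor presentational points. First, invoke Lemma~\ref{lem:arclength-sym} explicitly (as the paper does) to justify assuming arclength; you need it both for ``$\alpha$ has arclength $1/4$'' and for $t_1=t_0+\tfrac12$. Second, your opening claim that $\alpha$ is a semicircle is premature: until you have shown $\{t_0,t_1\}\in\bigl\{\{0,\tfrac12\},\{\tfrac14,\tfrac34\}\bigr\}$, the double-point parameter might lie in the interior of $(0,\tfrac14)$, in which case $\alpha$ would straddle both constituent circles. The logic is fine---you do not actually use the semicircle structure until after the parameter location---but it would be cleaner to postpone that claim.
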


\begin{proof}
(i)~The center $M$ of any circle $C\subset\R^3$, 
which -- as a set -- is $D_2$-symmetric, is the origin, 
since if not, we could find $i\in\{1,2,3\}$, such that $R_iM\not=M$ and
hence $R_iC$ is a circle of the same length as $C$, but
with a center $R_iM$ different from $M$.
Therefore, {the sets} $R_iC$ {and}
$C$ {differ which contradicts} the dihedral symmetry of~$C$.
Consequently, any injective 
arclength parametrization $c:\R/\Z\to\R^3$ of the once covered circle
with $D_2$-symmetry satisfies the antipodal relation
$c(t)=-c(t+1/2)$ for all $t\in\R/\Z$. 
If there is one such parametrization $c$ with $D_2$-symmetry
then its image
$c(\R/\Z)$ must be contained in $\spann\{\mathbf{e_1},\mathbf{e_3}\}$
since, 
by $1$-periodicity,
\begin{align}\label{eq:circle-symmetry}
c(t)& 
\stackrel{{\eqref{eq:group-action}}}=
R_2\circ c(t-1/2)
 = R_2\circ c(t+1/2)=-R_2\circ c(t)%
\end{align}
{for all $t\in\R/\Z$}.
Moreover, 
{we infer}
$c(0)\in\R\mathbf{e_3}$
from {\eqref{eq:point-constraints} 
in Proposition~\ref{prop:character-symmetry}.} 
The existence of such {a} parametrization
{was established in} Example~\ref{ex:one-circle}, 
cf.~\eqref{eq:generating-quartercircle}.

(ii)~For $\tpc[\pi]$ we can argue in a similar way
to see that the self-intersection point is at the origin.
{By Lemma~\ref{lem:arclength-sym} below we may assume that
$\tpc[\pi]$ is an arclength parametrized curve $\R/\Z\to\R^{3}$.}
Let the unit tangent {of $\tpc[\pi]$}
at {the origin} be denoted by $\nu\in\S^{2}$.
Due to symmetry,
$\tpc[\pi](t+\tfrac12)$ is just the image of {a $180$-degree rotation $R_\nu$
of $\tpc[\pi](t)$ about~$\nu$.}
Thus we obtain
\[ R_{\nu}\circ\tpc[\pi](t) = \tpc[\pi](t+\tfrac12)
\stackrel{{\eqref{eq:group-action}}}= R_{2}\circ\tpc[\pi](t)
 \qquad\text{for all } t\in\R/\Z. \]
This implies that the matrix product $R_{\nu}^{-1} R_{2}$
is the identity on the hyperplane which contains (the image of)
$\tpc[\pi]$. As it belongs to $SO(3)$, it must even be $\Id_{\R^{3}}$, 
in particular $\nu=\mathbf{e_{2}}$.

From part (i) of Proposition \ref{prop:character-symmetry}
we infer
that the image of $\tpc[\pi]$ must contain points
in $\R\mathbf{e_{\boldsymbol{k}}}\setminus\set0$ for $k=1$ or $k=3$.
Together with $\nu=\mathbf{e_{2}}$ we conclude that
the planar curve
$\tpc[\pi]$ belongs either to {$\mathbf{e_{1}^{\perp}}$
or $\mathbf{e_{3}^{\perp}}$}.
Both configurations are realized by Example~\ref{ex:tpc_pi},
cf.~\eqref{eq:generating-semicircle} and~\eqref{eq:generating-semicircle2}.
\end{proof}

According to the following result one can reparametrize $D_2$-symmetric
curves without affecting the symmetry. This, as well as the subsequent
uniform a priori bound on the size of dihedrally symmetric curves, turns out
to be useful ingredients in the existence proofs of Section~\ref{sec:existence}.

\begin{lemma}[Symmetry of arclength parametrization]\label{lem:arclength-sym}
If  $\g\in\Sigma^\ell$ has length $\mathscr{L}(\g)=L$, then
its arclength
parametrization $\Gamma$ is also $D_2$-symmetric, that is, $\Gamma$
is contained in the $D_2$-symmetric set $\Sigma^L$ defined as in 
\eqref{eq:Sigma}.
\end{lemma}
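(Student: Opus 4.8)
The plan is to reduce the whole statement to a single \emph{commutation relation} between the arclength function and the domain symmetries $\psi_i^\ell$. Write $\gamma=\Gamma\circ\phi$, where $\phi(t):=\mathscr{L}(\gamma|_{[0,t]})$ is the arclength function. Extending $\gamma$ periodically, $\phi$ becomes a non-decreasing map $\R\to\R$ with $\phi(t+\ell)=\phi(t)+L$, hence it descends to a continuous, degree-one, monotone map $\phi:\R/\ell\Z\to\R/L\Z$; the associated arclength parametrization $\Gamma:\R/L\Z\to\R^3$ is well defined by the standard theory of rectifiable curves (on the possibly non-trivial intervals where $\phi$ is constant, $\gamma$ is constant as well). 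Since reparametrization changes neither the image nor the length, $\Gamma\in C^0(\R/L\Z,\R^3)$ with $0<\mathscr{L}(\Gamma)=L<\infty$, so by Definition~\ref{def:dihedral} membership $\Gamma\in\Sigma^L$ reduces entirely to verifying $\tau^L_{d_i}(\Gamma)=\Gamma$ for $i=1,2,3$.

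First I would establish
\begin{equation*}
\phi\circ\psi_i^\ell=\psi_i^L\circ\phi\quad\text{on }\R/L\Z,\qquad i=1,2,3.\tag{$\ast$}
\end{equation*}
The only input is that each $R_i\in SO(3)$ is a Euclidean isometry: applying $\tau^\ell_{d_i}(\gamma)=\gamma$ to a subarc gives $\gamma|_J=R_i\circ\gamma\circ(\psi_i^\ell|_J)$, whence $\mathscr{L}(\gamma|_J)=\mathscr{L}(\gamma|_{\psi_i^\ell(J)})$ for every interval $J$. For $i=2$ (the pure shift $\psi_2^\ell(t)=t-\ell/2$) this yields $\mathscr{L}(\gamma|_{[t,t+\ell/2]})=\mathscr{L}(\gamma|_{[t-\ell/2,t]})$; as the two adjacent half-periods sum to $L$, each equals $L/2$, so $\phi(t-\ell/2)=\phi(t)-L/2$. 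For $i=1,3$ (orientation-reversing) the same length identity, applied to the intervals interchanged by the reflection, gives $\phi(\ell/2-t)=L/2-\phi(t)$ and $\phi(\ell-t)=L-\phi(t)$, respectively. Read modulo $L$, these three identities are precisely $(\ast)$.

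With $(\ast)$ in hand the symmetry of $\Gamma$ is immediate: for every $t$,
\[
\tau^L_{d_i}(\Gamma)(\phi(t))=R_i\circ\Gamma\big(\psi_i^L(\phi(t))\big)\overset{(\ast)}{=}R_i\circ\gamma(\psi_i^\ell(t))=\gamma(t)=\Gamma(\phi(t)),
\]
where the second equality uses $(\ast)$ together with $\Gamma\circ\phi=\gamma$, and the third uses $\tau^\ell_{d_i}(\gamma)=\gamma$. Since $\phi$ is surjective onto $\R/L\Z$, this forces $\tau^L_{d_i}(\Gamma)=\Gamma$ on all of $\R/L\Z$, so $\Gamma\in\Sigma^L$.

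The main obstacle is the bookkeeping in $(\ast)$: one must track basepoints carefully, handle the orientation reversal for $i=1,3$, and justify that $\phi$ descends to a well-defined monotone map on the quotient circles compatible with reduction modulo $L$. As an alternative, slightly more hands-on route one can bypass $(\ast)$: by Proposition~\ref{prop:character-symmetry}\,(i) write $\gamma$ as the glueing~\eqref{eq:glueing} of the generating arc $\alpha=\gamma|_{[0,\ell/4]}$, reparametrize $\alpha$ by arclength to an arc $A$ of length $L/4$ whose endpoints are unchanged (so $A$ still satisfies~\eqref{eq:point-constraints}), and verify directly that the arclength parametrization $\Gamma$ of $\gamma$ coincides with the glueing of $A$ at scale $L$; Lemma~\ref{lem:D2-symmetric-curves} then yields $\Gamma\in\Sigma^L$. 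That verification again hinges on the equal-length-of-quarters observation underlying $(\ast)$, so the two approaches share the same analytic core.
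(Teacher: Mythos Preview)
Your proof is correct and follows the same strategy as the paper: both establish the commutation relation $\phi\circ\psi_i^\ell=\psi_i^L\circ\phi$ (this is precisely the paper's identity~\eqref{eq:arclength-period}) and then transfer the $D_2$-symmetry from $\gamma$ to $\Gamma$ via surjectivity of the arclength map. The only difference is in execution: the paper differentiates to get $|\gamma'(t)|=|\gamma'(\psi_i^\ell(t))|$ and then performs a change of variables in the arclength integral, whereas you argue directly with lengths of subarcs $\mathscr{L}(\gamma|_J)=\mathscr{L}(\gamma|_{\psi_i^\ell(J)})$; your version thereby avoids tacitly assuming absolute continuity of $\gamma$ and works at the $C^0$ level of generality actually stated in the definition of~$\Sigma^\ell$. (Minor typo: in your displayed $(\ast)$, the domain should read $\R/\ell\Z$, not $\R/L\Z$.)
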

\begin{proof}
Differentiating the symmetry relation $\g=\tau^\ell_{d_i}(\g)$ on $\R/\ell\Z$
we obtain by \eqref{eq:group-action} and \eqref{eq:inner-action}
\begin{equation}\label{eq:speed}
\abs{\g'(t)}=\abs{\tau^\ell_{d_i}(\g)'(t)}\overset{\eqref{eq:group-action}}{=}
\abs{R_i\circ\g'\big(\psi^\ell_i(t)\big)(\psi^\ell_i)'(t)}
\overset{\eqref{eq:inner-action}}{=}
\abs{\g'\big(\psi^\ell_i(t)\big)}
\end{equation}
for all $t\in\R/\ell\Z$ and $i=0,1,2,3.$ This identity can be used to compute
the arclength parameter
\begin{align}\label{eq:arclength-computation}
s\big(\psi^\ell_i(t)\big) & =
{\length\br{\gamma|_{[0,\psi^\ell_i(t)]}}}
=\int_0^{\psi^\ell_i(t)}\abs{\g'(\tau)}\,d\tau
\overset{\eqref{eq:speed}}{=}
\int_0^{\psi^\ell_i(t)}\abs{\g'(\psi_i^\ell(\tau))}\,d\tau\notag\\
& = \int_{\psi^\ell_i(0)}^t\abs{\g'(z)}\frac1{(\psi^\ell_i)'({\psi^\ell_i}(z))}\,dz=
\br{\sign (\psi_i^\ell)'}\big[s(t)-s(\psi_i^\ell(0))\big],
\end{align}
where we changed variables to $z:=\psi_i^\ell(\tau)$ with $z(0)=\psi_i^\ell(0)$,
$z(\psi_i^\ell(t))=\psi_i^{\ell}\circ\psi_i^\ell(t)=t$ by virtue
of \eqref{eq:group-property1}.
Notice also that $(\psi_i^\ell)'(\cdot)=(-1)^i$ for $i=0,1,2,3$.
{Using~\eqref{eq:arclength-computation} with $i=2$ and $t=\ell$ we infer
$2s(\frac\ell2)=s(\ell)=L$. Now} 
it is easy to check that
\begin{equation}\label{eq:psi-bridge}
\br{\sign (\psi_i^\ell)'}\big[s(t)-s(\psi_i^\ell(0))\big]=\psi_i^L(s(t))\quad\Foa
t\in\R/\ell\Z,\,i=0,1,2,3,
\end{equation}
where $\psi_i^L$ is the transformation defined in \eqref{eq:inner-action} only
with $\ell$ replaced by $L$. In other words, $D_2$ acts on the domain $\R/L\Z$
of the arclength parametrization $\Gamma$ via the transformations $\psi_i^L$, $i=
0,1,2,3.$ Combining \eqref{eq:arclength-computation} with \eqref{eq:psi-bridge}
we arrive at
\begin{equation}\label{eq:arclength-period}
s\big(\psi^\ell_i(t)\big)=\psi_i^L(s(t))\quad\Foa
t\in\R/\ell\Z,\,i=0,1,2,3,
\end{equation}
so that the symmetry of $\g$ leads to 
\begin{align}\label{eq:arclength-sym2}
\Gamma\big(s(t)\big) & =\g(t) = \tau^\ell_{d_i}(\g)(t)\overset{\eqref{eq:group-action}}{=}
R_i\circ\g(\psi_i^\ell(t))=R_i\circ\Gamma\big(s(\psi_i^\ell(t))\big)\notag\\
& \overset{\eqref{eq:arclength-period}}{=}R_i\circ\Gamma\big(\psi_i^L(s(t))\big)
\overset{\eqref{eq:group-action}}{=}\tau^L_{d_i}(\Gamma)\big(s(t)\big)
\end{align}
for all $t\in\R/\ell\Z,\,i=0,1,2,3,$ which establishes the symmetry of $\Gamma$.
\end{proof}

\begin{lemma}[Optimal $L^\infty$-bound]\label{lem:improved-Linfty-bound}
A closed curve $\g\in C^0(\R/\ell\Z,\R^3)$  
of length
$\Lambda\in (0,\infty)$ whose image has dihedral symmetry is contained
in the closure of the ball $B_{\Lambda/4}(0)$. 
\end{lemma}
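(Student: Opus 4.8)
The plan is to prove the equivalent pointwise statement that every point $p$ of the image $\g(\R/\ell\Z)$ satisfies $\abs{p}\le\Lambda/4$; passing to the closure then gives containment in $\overline{B_{\Lambda/4}(0)}$. Fix such a $p=(a,b,c)$ and set $M:=\abs{p}$. Since the image is invariant, as a set, under the rotations $R_1,R_2,R_3$ from \eqref{eq:rotations}, the entire orbit $p,\,R_1p,\,R_2p,\,R_3p$ lies on the image and is therefore visited by the closed curve $\g$. My first step is to record the elementary bound that $\g$, being a closed rectifiable loop through these (at most four) points, has length at least the perimeter of the closed polygon joining them in the cyclic order in which $\g$ encounters them—this is just the fact that each subarc of $\g$ between consecutive marked parameters is at least as long as the corresponding chord. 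In particular, $\Lambda$ is bounded below by the \emph{minimal} such perimeter over all cyclic orderings of the orbit.

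The second step makes this lower bound explicit. Writing $A:=\sqrt{b^2+c^2}$, $B:=\sqrt{a^2+c^2}$, $C:=\sqrt{a^2+b^2}$, a direct computation yields the six pairwise distances in three equal pairs, namely $\abs{p-R_1p}=\abs{R_2p-R_3p}=2A$, $\abs{p-R_2p}=\abs{R_1p-R_3p}=2B$, and $\abs{p-R_3p}=\abs{R_1p-R_2p}=2C$. Consequently, the three distinct Hamiltonian cycles on the four orbit points have perimeters $4(A+C)$, $4(A+B)$, and $4(B+C)$, so the minimal perimeter equals $4$ times the sum of the two smallest of $A,B,C$. The heart of the argument is then the elementary inequality that this sum is at least $M$: assuming without loss of generality $a^2\ge b^2\ge c^2$, so that $A\le B\le C$, one computes
\[
(A+B)^2=(b^2+c^2)+(a^2+c^2)+2AB=M^2+c^2+2AB\ge M^2,
\]
whence $A+B\ge M$ and therefore $\Lambda\ge 4(A+B)\ge 4M$, i.e.\ $M\le\Lambda/4$.

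It remains to dispose of the degenerate orbits, which I would address at the very start. If $p=0$ there is nothing to show, and if $p$ lies on a single coordinate axis $\R\mathbf{e_i}\setminus\{0\}$ then the orbit collapses to the antipodal pair $\{p,-p\}$, so the estimate follows from the bigon bound $\Lambda\ge 2\abs{p-(-p)}=4M$ for a closed curve through two distinct points; every remaining position of $p$ has a genuine four-point orbit handled by the computation above. I expect the only real obstacle to be selecting the correct sharp lower bound—the minimal tour through the full $D_2$-orbit of a single point, rather than a single chord $\abs{p-R_ip}$, which would give merely $\Lambda\ge 4\sqrt{a^2+c^2}$ and hence the desired bound only when a coordinate vanishes—together with verifying the inequality $A+B\ge M$ that forces all three coordinates to cooperate. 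Optimality of the constant $\Lambda/4$ is then visible from the equality case $c=0$ and $AB=0$, realized by an axis-aligned degenerate curve, so no smaller radius works.
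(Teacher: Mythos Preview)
Your proof is correct and follows essentially the same approach as the paper's: both bound $\Lambda$ below by the minimal Hamiltonian cycle through the $D_2$-orbit of a point, compute the three possible perimeters $4(A+B)$, $4(A+C)$, $4(B+C)$, and use the elementary inequality $A+B\ge\abs{p}$ (which the paper states without the explicit squaring you provide). The only cosmetic differences are that the paper argues by contradiction and does not separate out the degenerate orbit cases, which are automatically covered by the same inequality.
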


\begin{proof}
Assume to the contrary that there is a point 
$(x_1,x_2,x_3)=x:=\gamma(s)$ such that
$|x|>\Lambda/4.$ We may assume without loss of generality that $|x_1|\ge |x_2|\ge
|x_3|$. The symmetry assumption means 
\begin{equation}\label{eq:symmetry-assumption}
R_1\circ\gamma(\R/\ell\Z)=
R_2\circ\gamma(\R/\ell\Z)=
R_3\circ\gamma(\R/\ell\Z)=\gamma(\R/\ell\Z),
\end{equation}
so that 
we have $R_i(x)\in\gamma(\R/\ell\Z)$ for $i=1,2,3$, and some permutation
of the four points $A:=x$, $B:=R_1(x)$, $C:=R_2(x)$, $D:=R_3(x)$
forms a polygon inscribed in $\gamma$. By direct computation
we infer
\begin{align}\label{eq:edge-comparison}
S:=\abs{A-B}=\abs{C-D}= 2\sqrt{x_2^2+x_3^2},\notag\\
M:=\abs{A-C}=\abs{B-D}=2\sqrt{x_1^2+x_3^2},\\
L:=\abs{A-D}=\abs{B-C}=2\sqrt{x_1^2+x_2^2},\notag
\end{align}
with $0\le S\le M\le L$ according to our assumption on the coordinates of $x$.
Of all the possible choices of permutations of the points $A,B,C,D$, the two
closed polygons $P_1:=ABDCA$ and $P_2:=ACDBA$ have the shortest length
$\mathscr{L}(P_1)=\mathscr{L}(P_2)=2S+2M$, which by means of 
\eqref{eq:edge-comparison} leads to the contradictive
inequality
\[ \Lambda\ge 2S+2M\overset{\eqref{eq:edge-comparison}}{=}
4\big(\sqrt{x_2^2+x_3^2}+\sqrt{x_1^2+x_3^2}\big)\ge
4|x|>\Lambda. \qedhere \]
\end{proof}

This $L^\infty$ bound is optimal, since one can think of 
a sequence of ellipses of length $L$, all centered at the origin and
 contained in a fixed coordinate plane, converging
 to a straight segment of length $L/2$ on one coordinate
axis. All such ellipses are contained in the ball of radius $L/4$
centered at the origin (cf.  \cite{nitsche_1971}).

Recall from the introduction the set 
$W^{2,2}_\textnormal{ir}(\R/\ell\Z,\R^3)$
of closed, regular and embedded $W^{2,2}$-curves, each of which represents
a tame knot class.
Thus, for a given  knot class $\mathcal{K}$ we introduce
the subset
\begin{equation}\label{eq:Omega_K}
\Omega^\ell_\mathcal{K}:=\{\g\in W^{2,2}_\textnormal{ir}(\R/\ell\Z,\R^3):
[\g]=\mathcal{K}\},
\end{equation}
{which -- according to the Morrey--Sobolev embedding $W^{2,2}\hookrightarrow
C^1$ -- is the empty set unless $\mathcal K$ is
tame; see Footnote \ref{foot:tame}.
First we observe
 that $\Omega^\ell_\mathcal{K}$ is a Banach manifold.}

\begin{lemma}[$\Omega^\ell_\mathcal{K}$ is a Banach manifold]
\label{lem:Omega-banach}
For any fixed tame knot class $\mathcal{K}$  the set $\Omega^\ell_\mathcal{K}$
 is a {non-empty} open subset of 
the Banach space $W^{2,2}(\R/\ell\Z,\R^3)$, hence a Banach manifold.
\end{lemma}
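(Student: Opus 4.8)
The plan is to check the two things a subset of a Banach space needs in order to be a Banach manifold modeled on the whole space: non-emptiness and openness. Indeed, any non-empty open subset of a Banach space is canonically a $C^\infty$-Banach manifold, charted globally by the inclusion, so once these two properties are in place nothing further is required.

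For non-emptiness I would use tameness of $\mathcal{K}$ as recorded in Footnote~\ref{foot:tame}: the class contains a polygonal, equivalently a continuously differentiable, embedded representative. Mollifying such a $C^1$-representative produces a smooth curve that is $C^1$-close to it, hence (by Lemma~\ref{lem:isot}) still embedded and of the same knot type, and still regular for a sufficiently fine mollification. After reparametrizing to constant speed on $\R/\ell\Z$ this yields a regular, embedded, smooth -- in particular $W^{2,2}$ -- representative of $\mathcal K$, i.e.\ an element of $\Omega^\ell_\mathcal{K}$.

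For openness the decisive point is that the three conditions defining $W^{2,2}_\textnormal{ir}$ together with the knot-type constraint $[\g]=\mathcal K$ are all stable under small $C^1$-perturbations, and that such $C^1$-stability transfers to $W^{2,2}$ through the continuous Morrey--Sobolev embedding $W^{2,2}(\R/\ell\Z,\R^3)\hookrightarrow C^1(\R/\ell\Z,\R^3)$. Fix $\g\in\Omega^\ell_\mathcal{K}$. Regularity is open: since $\g'$ is continuous on the compact domain and $\abs{\g'}>0$, the number $m:=\min\abs{\g'}$ is positive, and every $C^1$-curve within $C^1$-distance $m/2$ of $\g$ retains positive speed. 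Embeddedness and knot type are open by Lemma~\ref{lem:isot}, which furnishes an $\eps>0$ such that every $C^1$-curve within $C^1$-distance $\eps$ of $\g$ is embedded and of class $\mathcal K$. By continuity of the embedding I would pick a $W^{2,2}$-radius $\delta>0$ with $\norm[W^{2,2}]{\tilde\g-\g}<\delta\Rightarrow\norm[C^1]{\tilde\g-\g}<\min\set{m/2,\eps}$; every such $\tilde\g$ is then regular, embedded and of knot type $\mathcal K$, hence lies in $\Omega^\ell_\mathcal{K}$. This shows $\Omega^\ell_\mathcal{K}$ is open.

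There is no genuinely deep step here; the proof is a packaging of standard facts. The only point that must be handled honestly is the openness of the knot-type constraint, which is precisely the imported isotopy-stability statement Lemma~\ref{lem:isot} and is the reason the passage through $C^1$ is indispensable -- the constraint $[\g]=\mathcal K$ is visibly not continuous with respect to weaker topologies. With openness in hand, the Banach-manifold structure follows at once.
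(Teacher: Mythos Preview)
Your proof is correct and follows essentially the same route as the paper's: openness is obtained by pulling the $C^1$-stability of regularity, embeddedness, and knot type (Lemma~\ref{lem:isot}) back to $W^{2,2}$ via the Morrey--Sobolev embedding. You are in fact more thorough than the paper, which does not spell out the non-emptiness argument and folds the regularity condition into the appeal to Lemma~\ref{lem:isot} without separate mention.
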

\begin{proof}
By the Morrey--Sobolev embedding result any curve 
$\g\in\Omega^\ell_\mathcal{K}$
is a regular $C^1$-knot representing the knot class $\mathcal{K}$, so that according
to 
Lemma~\ref{lem:isot} the curve $\g$ possesses
a neighbourhood $\mathcal{U}\subset C^1(\R/\ell\Z,\R^3)$ such that any curve 
$\xi\in\mathcal{U}$ is regular and of the same knot type $\mathcal{K}$. Again by means of
the Morrey--Sobolev embedding theorem we can choose the
radius $\delta$ of the 
 ball $B_\delta(\g)\subset W^{2,2}(\R/\ell\Z,\R^3)$ so small that  $B_\delta(\g)
 \subset\mathcal{U}$, which proves the claim.
 \end{proof}
Restricting the group action \eqref{eq:group-action} to $\Omega^\ell_\mathcal{K}$ yields a smooth $D_2$-manifold.
\begin{lemma}[$\Omega^\ell_\mathcal{K}$ is $D_2$-manifold]
\label{lem:group-action-Omega}
The mapping $\tau^\ell$ defined in \eqref{eq:group-action} acts on
the Banach manifold $\Omega^\ell_\mathcal{K}$, and under this action
$\Omega^\ell_\mathcal{K}$ becomes a smooth $D_2$-manifold.
\end{lemma}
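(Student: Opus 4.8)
The plan is to verify the two conditions of Definition~\ref{def:symmetry}(ii) for $\mathscr{M}=\Omega^\ell_\mathcal{K}$ and $G=D_2$: that $\tau^\ell$ restricts to a representation of $D_2$ on $\Omega^\ell_\mathcal{K}$, and that each $\tau^\ell_{d_i}$ is a smooth diffeomorphism of $\Omega^\ell_\mathcal{K}$. Since $D_2$ is finite, no additional joint-regularity of $\tau^\ell$ on $D_2\times\Omega^\ell_\mathcal{K}$ is required. The representation property \eqref{eq:homo} has already been established on the ambient space $C^0(\R/\ell\Z,\R^3)$ in Lemma~\ref{lem:group-action-C0}, and since $\Omega^\ell_\mathcal{K}\subset C^0$ and the formula \eqref{eq:group-action} is unchanged, it persists on $\Omega^\ell_\mathcal{K}$. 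The real content is therefore twofold: that $\tau^\ell_{d_i}$ preserves the set $\Omega^\ell_\mathcal{K}$, and that it is smooth.

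First I would show $\tau^\ell_{d_i}(\Omega^\ell_\mathcal{K})\subset\Omega^\ell_\mathcal{K}$ for each $i$ by checking the four defining properties for a fixed $\g\in\Omega^\ell_\mathcal{K}$. As $R_i$ is orthogonal and $\psi_i^\ell$ affine with $|(\psi_i^\ell)'|=1$, the chain rule yields $\tau^\ell_{d_i}(\g)\in W^{2,2}(\R/\ell\Z,\R^3)$ with unchanged $W^{2,2}$-norm, and the speed identity \eqref{eq:speed} gives $|\tau^\ell_{d_i}(\g)'|=|\g'\circ\psi_i^\ell|>0$, so regularity is preserved. Because $R_i$ is a bijective isometry of $\R^3$ and $\psi_i^\ell$ a bijection of $\R/\ell\Z$, the curve $\tau^\ell_{d_i}(\g)$ is injective precisely when $\g$ is, and its image equals the rotated set $R_i(\g(\R/\ell\Z))$. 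The key geometric point is preservation of the knot class: since $R_i\in SO(3)$ and $SO(3)$ is path-connected, $R_i$ is the time-one map of an ambient isotopy of $\R^3$ starting at $\Id_{\R^3}$, so $R_i(\g(\R/\ell\Z))$ is ambient isotopic to $\g(\R/\ell\Z)$, while reparametrization by $\psi_i^\ell$ leaves the image unchanged. Hence $[\tau^\ell_{d_i}(\g)]=\mathcal{K}$ and $\tau^\ell_{d_i}(\g)\in\Omega^\ell_\mathcal{K}$.

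For smoothness I would observe that each $\tau^\ell_{d_i}$ is the restriction to the open set $\Omega^\ell_\mathcal{K}$ of the map $\g\mapsto R_i\circ\g\circ\psi_i^\ell$ on $W^{2,2}(\R/\ell\Z,\R^3)$, which is linear (as checked in Lemma~\ref{lem:group-action-C0}) and bounded---in fact a $W^{2,2}$-isometry, by orthogonality of $R_i$ and $|(\psi_i^\ell)'|=1$. A bounded linear operator is $C^\infty$, so its restriction to the open submanifold $\Omega^\ell_\mathcal{K}$, which lands in $\Omega^\ell_\mathcal{K}$ by the previous step, is smooth. The involution property $\tau^\ell_{d_i}\circ\tau^\ell_{d_i}=\tau^\ell_e=\Id$, a consequence of \eqref{eq:group-property1} together with the representation property, then exhibits $\tau^\ell_{d_i}$ as its own inverse, so it is a smooth diffeomorphism of $\Omega^\ell_\mathcal{K}$. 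This verifies all requirements of Definition~\ref{def:symmetry}(ii).

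The only genuinely geometric step, and the one I expect to require care, is the invariance of the knot class under $\tau^\ell_{d_i}$; everything else is either inherited from the $C^0$-level computations of Lemma~\ref{lem:group-action-C0} or is a formal property of a bounded linear involution on a Banach space restricted to an open subset. For this step one must use that the action employs only \emph{proper} rotations $R_i\in SO(3)$ rather than arbitrary orthogonal matrices, so that it is realizable by an ambient isotopy---this is exactly why $D_2$ is represented by the matrices in \eqref{eq:rotations}.
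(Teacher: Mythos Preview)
Your proof is correct and follows essentially the same route as the paper's own proof: verify that each $\tau^\ell_{d_i}$ preserves $\Omega^\ell_\mathcal{K}$ (regularity, injectivity, $W^{2,2}$-membership, and knot class), then invoke linearity and the involution property from Lemma~\ref{lem:group-action-C0} to conclude that each $\tau^\ell_{d_i}$ is a smooth diffeomorphism. If anything, you are more explicit than the paper about the knot-class invariance---the paper simply asserts it, whereas you justify it via path-connectedness of $SO(3)$ and the resulting ambient isotopy.
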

\begin{proof}
It is easy to see that $\tau^\ell_{d_i}(\g)$ is contained in $\Omega^\ell_\mathcal{K}$
for $i=0,1,2,3$, and $\g\in\Omega^\ell_\mathcal{K}$,
since any rotation in the image and affine linear transformation of the periodic domain
does not change the $W^{2,2}$-regularity and injectivity on $[0,\ell)$. Moreover,
the knot class $\mathcal{K}$ is preserved as well, and
\[
\abs{\tau^{\ell}_{d_i}(\g)'(t)}=\abs{R_i\circ\g'(\psi_i^{\ell}(t))\br{\psi_i^{\ell}}'(t)}
=\abs{\g'(\psi_i^{\ell}(t))}>0\Foa t\in\R/\ell\Z.
\]
The algebraic property \eqref{eq:homo} as well as the
 {linearity} of
$\tau^{\ell}_{d_i}:\Omega^\ell_\mathcal{K}\to\Omega^\ell_\mathcal{K}$ for $i=0,1,2,3$
was
verified in the proof of Lemma~\ref{lem:group-action-C0}. Indeed, the linearity of
$\tau^{\ell}_{d_i}$ leads to the differential
\[
(d\tau^{\ell}_{d_i})_\g\eta=\tau^{\ell}_{d_i}(\eta)\quad\Foa \g\in\Omega^\ell_\mathcal{K},\,\,
\eta\in T_\g\Omega^\ell_\mathcal{K}\simeq W^{2,2}(\R/\ell\Z,\R^3)
\]
{and} $i=0,1,2,3.$
Therefore, $\Omega^\ell_\mathcal{K}$ is a smooth $D_2$-manifold, since $
\tau^{\ell}_{d_i}:\Omega^\ell_\mathcal{K}\to\Omega^\ell_\mathcal{K}$ is a
diffeomorphism with (smooth) inverse $\br{\tau^{\ell}_{d_i}}^{-1}:=\tau^{\ell}_{d_i}$
for each $i=0,1,2,3$ by means or the properties \eqref{eq:group-property1}.
\end{proof}

\section{Existence theory under the $D_2$-symmetry constraint}\label{sec:existence}
Throughout this section we set $\ell=1$.
Instead of the total energy $E_\vartheta=E+\vartheta\TP_q^{1/(q-2)}$ which is
positively $(-1)$-homogeneous, i.e., $E_\vartheta(r\g)=r^{-1}E_\vartheta(\g)$ for
all $\g\in W^{2,2}(\R/\Z,\R^3)$ and $r>0$, we consider the scale-invariant
version
\begin{equation}\label{eq:S_theta}
S_\vartheta(\g):=\mathscr{L}(\g)\cdot E_\vartheta(\g)=\mathscr{L}(\g)\cdot
\left( E(\g)+\vartheta \TP_q^{\frac1{q-2}}(\g)\right).
\end{equation}
We first minimize this scale-invariant total energy  on the class
of $W^{2,2}$-knots with dihedral symmetry, that is, we minimize 
$S_\vth$ on the $D_2$-symmetric subset 
\begin{equation}\label{eq:symmetric-set}
\Sigma_\mathcal{K}:=\Sigma^1\cap \Omega^1_\mathcal{K},
\end{equation}
where $\Sigma^\ell$ for general $\ell>0$ was defined in \eqref{eq:Sigma}
{and $\Omega^\ell_\mathcal{K}$ in~\eqref{eq:Omega_K}}.
\begin{theorem}[Symmetric minimizers of total scaled energy]
\label{thm:existence-total}
Assume that $\Sigma_\mathcal{K}\not=\emptyset $ for a given 
knot class $\mathcal{K}$. Then for any $\vartheta>0$ there exists an 
arclength parametrized knot
$\Gamma_\vartheta\in \Sigma_\mathcal{K}$ with length $\mathscr{L}
(\Gamma_\vth)=1$, such that
\begin{equation}\label{eq:mini-total}
S_\vartheta(\Gamma_\vartheta)=\inf_{\Sigma_\mathcal{K}}S_\vartheta(\cdot).
\end{equation}
\end{theorem}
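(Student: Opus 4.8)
The plan is to apply the direct method of the calculus of variations to $S_\vth$ over $\Sigma_\mathcal{K}$. Since $\Sigma_\mathcal{K}\neq\emptyset$, any competitor shows that the infimum $m:=\inf_{\Sigma_\mathcal{K}}S_\vth$ is finite (its tangent-point part is finite by Theorem~\ref{thm:energy-space} via the embedding $W^{2,2}\hookrightarrow W^{2-(1/q),q}$), and $m\ge(2\pi)^2>0$ by Fenchel's lower bound together with the scale-invariance of $S_\vth$. I would start from a minimizing sequence $\g_j\in\Sigma_\mathcal{K}$ with $S_\vth(\g_j)\to m$. Exploiting $S_\vth(r\g)=S_\vth(\g)$, I rescale each $\g_j$ to unit length and then reparametrize by arclength: the rescaling commutes with the rotations $R_i$ and hence preserves membership in $\Sigma^1$, while Lemma~\ref{lem:arclength-sym} guarantees that passing to the arclength parametrization does not destroy the $D_2$-symmetry. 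Neither operation changes the knot type nor the value of $S_\vth$, so I obtain arclength parametrized $\Gamma_j\in\Sigma_\mathcal{K}$ with $\mathscr{L}(\Gamma_j)=1$ and $S_\vth(\Gamma_j)=E(\Gamma_j)+\vth\TP_q^{1/(q-2)}(\Gamma_j)\to m$.

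Next I would extract the bounds needed for compactness. As $\vth>0$ and both summands are nonnegative, the convergence $S_\vth(\Gamma_j)\to m$ yields uniform bounds on $E(\Gamma_j)=\int_0^1|\Gamma_j''|^2\,ds$ and on $\TP_q(\Gamma_j)$. Together with $|\Gamma_j'|\equiv1$ and the a priori bound $\|\Gamma_j\|_{L^\infty}\le1/4$ from Lemma~\ref{lem:improved-Linfty-bound}, this gives a uniform $W^{2,2}$-bound on $(\Gamma_j)$. By reflexivity and the compact embedding $W^{2,2}\hookrightarrow\hookrightarrow C^1$, a subsequence converges weakly in $W^{2,2}$ and strongly in $C^1$ to some $\Gamma_0\in W^{2,2}(\R/\Z,\R^3)$. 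The $C^1$-convergence forces $|\Gamma_0'|\equiv1$, so $\Gamma_0$ is regular, arclength parametrized, and of length one, and the symmetry relations $\tau^1_{d_i}(\Gamma_j)=\Gamma_j$ pass to the limit by continuity of the (linear) maps $\tau^1_{d_i}$, giving $\Gamma_0\in\Sigma^1$.

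The crux, and the step I expect to be the main obstacle, is to verify that $\Gamma_0$ still lies in $\Omega^1_\mathcal{K}$, that is, that it remains embedded and of knot type $\mathcal{K}$; a mere $C^1$-limit of embedded curves can collapse and change its isotopy class. This is precisely where the repulsive term is indispensable: the uniform bound $\TP_q(\Gamma_j)\le T$ yields, via Lemma~\ref{lem:bilipschitz}, a uniform lower bound $\BiLip(\Gamma_j)\ge C>0$, which is stable under $C^1$-convergence and hence gives $\BiLip(\Gamma_0)\ge C>0$. Thus $\Gamma_0$ is injective, and since the $\Gamma_j$ are $C^1$-close to the embedded $\Gamma_0$ for large $j$, Lemma~\ref{lem:isot} shows that $\Gamma_0$ inherits the common knot class $\mathcal{K}$ of the $\Gamma_j$. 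Therefore $\Gamma_0\in\Sigma^1\cap\Omega^1_\mathcal{K}=\Sigma_\mathcal{K}$.

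Finally I would invoke lower semicontinuity. The bending energy $E(\g)=\int_0^1|\g''|^2\,ds$ is convex in $\g''$ and hence weakly lower semicontinuous in $W^{2,2}$, while $\TP_q$ is sequentially lower semicontinuous under $C^1$-convergence by Theorem~\ref{thm:reg-tanpoint}; composing with the continuous increasing map $x\mapsto x^{1/(q-2)}$ preserves this. Using the superadditivity $\liminf a_j+\liminf b_j\le\liminf(a_j+b_j)$ and $\mathscr{L}(\Gamma_0)=1$, I obtain
\[
S_\vth(\Gamma_0)=E(\Gamma_0)+\vth\TP_q^{\frac1{q-2}}(\Gamma_0)\le\liminf_{j\to\infty}\Big(E(\Gamma_j)+\vth\TP_q^{\frac1{q-2}}(\Gamma_j)\Big)=m.
\]
As $\Gamma_0\in\Sigma_\mathcal{K}$ also satisfies $S_\vth(\Gamma_0)\ge m$, equality holds and $\Gamma_0=:\Gamma_\vth$ is the desired arclength parametrized $D_2$-symmetric minimizer of length one.
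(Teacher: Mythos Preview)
Your proof is correct and follows essentially the same direct-method approach as the paper: normalize the minimizing sequence by scale-invariance and arclength reparametrization (Lemma~\ref{lem:arclength-sym}), extract uniform $W^{2,2}$-bounds via Lemma~\ref{lem:improved-Linfty-bound}, pass to a weak $W^{2,2}$/strong $C^1$ limit, recover embeddedness through the bi-Lipschitz estimate of Lemma~\ref{lem:bilipschitz}, identify the knot class by Lemma~\ref{lem:isot}, and conclude by lower semicontinuity. The only minor deviation is that you obtain the uniform bound on $\TP_q(\Gamma_j)$ directly from $S_\vth(\Gamma_j)\to m$ and $\vth>0$, whereas the paper routes through the embedding $W^{2,2}\hookrightarrow W^{2-(1/q),q}$; your argument is slightly more direct but not substantively different.
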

Before proving this crucial existence result, let us draw some immediate
conclusions that also lead to the proof{s of Theorems~\ref{thm:symmetric-critical}
and  \ref{thm:symmetric-elastic-knots}} stated
in the introduction.
\begin{corollary}[Symmetric $S_\vartheta$-critical points]\label{cor:symm-critic}
Any symmetric  locally minimizing knot $\g\in\Sigma_\mathcal{K}$ 
of $S_\vartheta|_{\Sigma_\mathcal{K}}$ is $S_\vartheta$-critical,
that is,
\begin{equation}\label{eq:symm-critic}
DS_\vartheta(\g) h = 0\quad\Foa h\in W^{2,2}(\R/\Z,\R^3).
\end{equation}
\end{corollary}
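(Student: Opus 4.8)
The plan is to invoke the principle of symmetric criticality for finite groups, Corollary~\ref{cor:psk}, with $G=D_2$ and the $D_2$-manifold $\mathscr{M}=\Omega^1_\mathcal{K}$. By Lemma~\ref{lem:Omega-banach} this is an open subset of the Banach space $W^{2,2}(\R/\Z,\R^3)$, and by Lemma~\ref{lem:group-action-Omega} the action $\tau^1$ turns it into a smooth $D_2$-manifold whose $D_2$-symmetric subset is precisely $\Sigma_\mathcal{K}=\Sigma^1\cap\Omega^1_\mathcal{K}$ (nonempty, as $\g\in\Sigma_\mathcal{K}$). It therefore remains to check the two hypotheses on $S_\vartheta$ not yet recorded, namely that $S_\vartheta\in C^1(\Omega^1_\mathcal{K})$ and that $S_\vartheta$ is $D_2$-invariant; once these are in place the conclusion is immediate.

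First I would establish the $C^1$-regularity of $S_\vartheta$ on $\Omega^1_\mathcal{K}$. The length $\mathscr{L}$ and the bending energy $E$ are smooth on the open set of regular $W^{2,2}$-curves, since their integrands are smooth functions of $\g'$ and $\g''$ with $\abs{\g'}$ bounded away from $0$. For the non-local term I would use that $q\in(2,4]$ yields the continuous embedding $W^{2,2}\hookrightarrow W^{2-(1/q),q}$, so that every $\g\in\Omega^1_\mathcal{K}$ is a regular embedded curve of class $W^{2-(1/q),q}$; Theorem~\ref{thm:reg-tanpoint} then guarantees that $\TP_q$, and hence its positive power $\TP_q^{1/(q-2)}=\mathcal{R}$, is continuously differentiable there. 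Since sums, products, and positive powers of $C^1$-functionals are again $C^1$, the scale-invariant total energy $S_\vartheta=\mathscr{L}\cdot(E+\vartheta\,\TP_q^{1/(q-2)})$ is of class $C^1$ on $\Omega^1_\mathcal{K}$.

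Next I would verify $D_2$-invariance, i.e.\ $S_\vartheta(\tau^1_{d_i}(\g))=S_\vartheta(\g)$ for $i=0,1,2,3$. Each $\tau^1_{d_i}$ postcomposes $\g$ with the rotation $R_i\in SO(3)$ and precomposes it with the isometry $\psi^1_i$ of $\R/\Z$. All three constituents of $S_\vartheta$ are invariant under such transformations: the identity $\abs{\tau^1_{d_i}(\g)'(t)}=\abs{\g'(\psi^1_i(t))}$ established in the proof of Lemma~\ref{lem:group-action-Omega} gives invariance of $\mathscr{L}$ and, together with the fact that the curvature is a pointwise geometric quantity unchanged by a rotation, invariance of $E$; the integrand of $\TP_q$ in~\eqref{eq:tan-point} depends only on the tangent-point radius $r_\textnormal{tp}$ and the line element, both of which are preserved under isometries of the image and reparametrizations of the domain, so $\TP_q$ is invariant as well. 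Hence $S_\vartheta$ is $D_2$-invariant.

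Finally, since $\g$ is a local minimizer of the restriction $S_\vartheta|_{\Sigma_\mathcal{K}}$ and $\Sigma_\mathcal{K}$ is a $C^1$-submanifold of $\Omega^1_\mathcal{K}$ by Palais's theorem, the derivative of $S_\vartheta|_{\Sigma_\mathcal{K}}$ vanishes at $\g$ along every tangent direction, so $\g$ is a critical point of $S_\vartheta|_{\Sigma_\mathcal{K}}$. Corollary~\ref{cor:psk} then upgrades this to criticality on the full manifold, yielding $DS_\vartheta(\g)h=0$ for all $h\in T_\g\Omega^1_\mathcal{K}\simeq W^{2,2}(\R/\Z,\R^3)$, which is exactly~\eqref{eq:symm-critic}. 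The only genuinely nontrivial input is the $C^1$-differentiability of the non-local tangent-point term, and this is precisely what Theorem~\ref{thm:reg-tanpoint} supplies; the remaining steps are routine verifications of the Palais hypotheses.
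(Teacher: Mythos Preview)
Your proposal is correct and follows essentially the same approach as the paper: verify $D_2$-invariance of $S_\vartheta$ via the Euclidean and reparametrization invariance of $\mathscr{L}$, $E$, and $\TP_q$; establish $C^1$-regularity through Theorem~\ref{thm:reg-tanpoint}; and then apply Corollary~\ref{cor:psk} on the $D_2$-manifold $\Omega^1_\mathcal{K}$ from Lemma~\ref{lem:group-action-Omega}. Your write-up is somewhat more detailed (explicitly invoking the embedding $W^{2,2}\hookrightarrow W^{2-(1/q),q}$ and noting that $\Sigma_\mathcal{K}$ is a $C^1$-submanifold), but the logical skeleton is identical to the paper's.
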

\begin{proof}
Small $D_2$-symmetric variations of a locally minimizing knot $\g\in
\Sigma_\mathcal{K}
$ remain regular and in the same knot class $\mathcal{K}$, so that $\g$
is $(S_\vartheta|_{\Sigma_\mathcal{K}})$-critical, that is,
$D(S_\vartheta|_{\Sigma_\mathcal{K}})(\g)h=0$ for all $h\in T_\g
\Sigma_\mathcal{K}$.  Using the definition of the group action
\eqref{eq:group-action}, \eqref{eq:inner-action} one easily checks
that $S_\vartheta$ is a $D_2$-invariant energy.
{Indeed, both $E$ and $\TP_{q}$ are invariant under
Euclidian transformations and reparametrization;
here we even do not change the speed due to
$|\br{\psi_i^{\ell}}'(\cdot)|=1$ for all $i=0,1,2,3$.
Furthermore, $S_\vartheta$}
is of class $C^1$ by means of Theorem~\ref{thm:reg-tanpoint}.

Moreover,
$\Sigma_\mathcal{K}$ is the non-empty $D_2$-symmetric subset of the
smooth $D_2$-manifold $\Omega^1_\mathcal{K}$, {cf.}\ Lemma 
\ref{lem:group-action-Omega}, so that
we can apply the version
of Palais's principle of 
symmetric criticality stated in Corollary~\ref{cor:psk}.
\end{proof}
Criticality of $S_\vartheta$ is directly related to criticality for the constrained 
variational problem \eqref{eq:P_theta} for the original total energy $E_\vartheta$.
\begin{corollary}[Euler--Lagrange-equation]\label{cor:ELG}
Any arclength parametrized critical point $\Gamma\in W^{2,2}(\R/\Z,\R^3)$ of $S_\vartheta$ 
satisfies
\begin{equation}\label{eq:ELG}
DE_\vartheta(\Gamma)+\lambda\cdot D\mathscr{L}(\Gamma)=0
\end{equation}
for the Lagrange multiplier $\lambda:=E_\vartheta(\Gamma)$. 
Moreover, any arclength parametrized critical point 
for the variational problem \eqref{eq:P_theta} satisfies the same variational 
equation \eqref{eq:ELG}.
\end{corollary}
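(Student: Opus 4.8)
The plan is to exploit the scaling structure of the energies. Since $\mathscr{L}$ is positively $1$-homogeneous and $E_\vartheta$ positively $(-1)$-homogeneous, the product $S_\vartheta=\mathscr{L}\cdot E_\vartheta$ is scale-invariant, i.e.\ $0$-homogeneous. As $\mathscr{L}$, $E$ and (by Theorem~\ref{thm:reg-tanpoint}) $\TP_q$ are of class $C^1$ on the open set of regular embedded $W^{2,2}$-curves, I would first record Euler's relations, obtained by differentiating $F(r\g)=r^{k}F(\g)$ in $r$ at $r=1$:
\begin{equation*}
D\mathscr{L}(\Gamma)\Gamma=\mathscr{L}(\Gamma)=1,\qquad DE_\vartheta(\Gamma)\Gamma=-E_\vartheta(\Gamma),
\end{equation*}
where $\mathscr{L}(\Gamma)=1$ because $\Gamma$ is parametrized by arclength.

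For the first assertion I would simply apply the product rule to an $S_\vartheta$-critical $\Gamma$: for every $h\in W^{2,2}(\R/\Z,\R^3)$,
\begin{equation*}
0=DS_\vartheta(\Gamma)h=E_\vartheta(\Gamma)\,D\mathscr{L}(\Gamma)h+\mathscr{L}(\Gamma)\,DE_\vartheta(\Gamma)h=DE_\vartheta(\Gamma)h+\lambda\,D\mathscr{L}(\Gamma)h,
\end{equation*}
using $\mathscr{L}(\Gamma)=1$ and $\lambda:=E_\vartheta(\Gamma)$, which is exactly \eqref{eq:ELG}.

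For the second assertion I would observe that among the defining conditions of $\mathscr{C}(\mathcal{K})$ only $\mathscr{L}=1$ is a genuine constraint, since $|\g'|>0$ is open and $[\g]=\mathcal{K}$ is open by Lemma~\ref{lem:isot}. Hence the tangent space of the constraint at $\Gamma$ equals $\ker D\mathscr{L}(\Gamma)$, and criticality for \eqref{eq:P_theta} means $DE_\vartheta(\Gamma)h=0$ whenever $D\mathscr{L}(\Gamma)h=0$. Since $D\mathscr{L}(\Gamma)$ is a nonzero bounded functional (as $D\mathscr{L}(\Gamma)\Gamma=1$), its kernel has codimension one, so $DE_\vartheta(\Gamma)$ must be a scalar multiple of $D\mathscr{L}(\Gamma)$; explicitly, for any $h$ the vector $h-(D\mathscr{L}(\Gamma)h)\,\Gamma$ lies in $\ker D\mathscr{L}(\Gamma)$, which together with Euler's relation gives $DE_\vartheta(\Gamma)h=-E_\vartheta(\Gamma)\,D\mathscr{L}(\Gamma)h$, again \eqref{eq:ELG}.

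The computation is routine; the step I expect to matter most is the identification of the Lagrange multiplier with the \emph{specific} value $E_\vartheta(\Gamma)$. This is precisely where the homogeneity is indispensable: testing against the radial direction $h=\Gamma$ both supplies the transversal vector that pins down the multiplier (because $D\mathscr{L}(\Gamma)\Gamma=1$) and fixes its value through $DE_\vartheta(\Gamma)\Gamma=-E_\vartheta(\Gamma)$. As an alternative I could first verify that a critical point of \eqref{eq:P_theta} is $S_\vartheta$-critical — decomposing $h=h_0+c\Gamma$ with $h_0\in\ker D\mathscr{L}(\Gamma)$ and using scale-invariance $DS_\vartheta(\Gamma)\Gamma=0$ — and then reduce to the first part.
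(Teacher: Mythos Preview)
Your proposal is correct and follows essentially the same route as the paper: product rule on $S_\vartheta=\mathscr{L}\cdot E_\vartheta$ with $\mathscr{L}(\Gamma)=1$ for the first part, and for the second part the existence of a Lagrange multiplier (justified via openness of the regularity and knot-class conditions) followed by testing against $h=\Gamma$ and invoking the $(-1)$-homogeneity of $E_\vartheta$ to pin down $\lambda=E_\vartheta(\Gamma)$. The only cosmetic difference is that the paper computes $D\mathscr{L}(\Gamma)\Gamma=\int_{\R/\Z}|\Gamma'|^2=1$ directly from the arclength condition, whereas you obtain the same value via Euler's relation for the $1$-homogeneous functional $\mathscr{L}$.
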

\begin{proof}
The Euler--Lagrange equation \eqref{eq:ELG} is a direct consequence of
\eqref{eq:symm-critic} via the product rule and because $\mathscr{L}(\Gamma)=1$.
 The  variational
equation for the constrained  variational problem \eqref{eq:P_theta} is
\begin{equation}\label{eq:constrained-ELG}
DE_\vartheta(\Gamma)+\mu\cdot D\mathscr{L}(\Gamma)=0
\end{equation}
for some Lagrange multiplier $\mu\in\R,$  since small variations $\Gamma
+\epsilon h$
for $h\in W^{2,2}(\R/\Z,\R^3)$ remain regular and in the same knot
class $\mathcal{K}$ by Morrey's compact embedding $W^{2,2}$ into $C^1$.
Testing \eqref{eq:constrained-ELG} with $\Gamma$ itself
we can use the fact that $\abs{\Gamma'}\equiv 1$ to find
\[
0=DE_\vartheta(\Gamma)\Gamma +\mu \int_{\R/\Z}\abs{\Gamma'(\tau)}^2\,d\tau=
DE_\vartheta(\Gamma)\Gamma+\mu,
\]
so that $\mu=-DE_\vartheta(\Gamma)\Gamma=E_\vartheta(\Gamma)$ 
by the positive
$(-1)$-homogeneity of $E_\vartheta$.
\end{proof}

{Before proving Theorem~\ref{thm:existence-total}
itself, we turn to another immediate application, namely
the existence of symmetric critical knots for the total energy $E_\vth$ as stated in
Theorem~\ref{thm:symmetric-critical}, and the existence of symmetric elastic knots; see Theorem~\ref{thm:symmetric-elastic-knots}.}

\begin{proof}[Proof of Theorem~\ref{thm:symmetric-critical}]
Since by assumption there is at least one $D_2$-symmetric knot contained in $\CK$ one has $\Sigma_\mathcal{K}\not=\emptyset$ so that
Theorem~\ref{thm:existence-total} is applicable.
The $S_\vartheta$-minimizing knots $\Gamma_\vartheta
\in\Sigma_\mathcal{K}$ obtained in that theorem 
have length one, so $\Gamma_\vth\in\CK$. They 
are $S_\vartheta$-critical according to Corollary~\ref{cor:symm-critic}. Moreover,
$\abs{\Gamma_\vartheta'}\equiv 1$ on $\R/\Z$ so that Corollary~\ref{cor:ELG}
implies that the Euler--Lagrange equation \eqref{eq:ELG} holds true, which is
the variational equation \eqref{eq:intro-ELG}
stated in Theorem~\ref{thm:symmetric-critical} with the
exact same Lagrange multiplier. 
\end{proof}

\begin{proof}[Proof of Theorem~\ref{thm:symmetric-elastic-knots}]
For any $\vartheta >0$ we find by virtue of Theorem~\ref{thm:existence-total}
an arclength parametrized knot $\Gamma_\vartheta\in\Sigma_{\mathcal{K}}$
(of length $\mathscr{L}(\Gamma_\vartheta)=1$ hence $\Gamma_\vth\in\CK
$)
such that 
\[
E_\vartheta(\Gamma_\vth)\overset{\eqref{eq:S_theta}}{=}S_\vth(\Gamma_\vth)
\overset{\eqref{eq:mini-total}}{\le} S_\vth(\beta)=E_\vth(
\beta)
\]
for all $D_2$-symmetric $\beta\in\CK$,
since such $\beta $ are contained in $\Sigma_\mathcal{K}$.
By definition of the total energy $E_\vth$ we infer a uniform bound
on the bending energies $E(\Gamma_\vth)$,
\begin{equation}\label{eq:mini-bending}
E(\Gamma_\vth)\le E_\vth(\Gamma_\vth)\le
E_\vth(\beta)\le
E(\beta)+\TP_q^{\frac1{q-2}}(\beta)<\infty
\,\,\Foa\vth\in (0,1]
\end{equation}
for all $D_2$-symmetric curves $\beta\in\CK$. Notice that the right-hand side is
finite by virtue of
Theorem~\ref{thm:energy-space}.
Together with the uniform $L^\infty$-bound
$ \norm[L^\infty(\R/\Z,\R^3)]{\Gamma_\vth}\le 1/4, $
which follows from Lemma~\ref{lem:improved-Linfty-bound} since $\mathscr{L}
(\Gamma_\vth)=1$,
we obtain the uniform bound
\begin{equation}\label{eq:uniform-norm}
\norm[W^{2,2}(\R/\Z,\R^3)]{\Gamma_\vth}\le C<\infty\quad\Foa \vth\in (0,1].
\end{equation}
Hence, for any given sequence $\vth_j\to 0$ there exists a
subsequence $(\vth_{j_k})_k\subset (\vth_j)_j$ such that
the corresponding symmetric minimizing knots $\Gamma_{\vth_{j_k}}$
converge weakly in $W^{2,2}$ and strongly in $C^1$ to a limiting
curve $\Gamma_0$ as $k\to\infty$. Therefore $\Gamma_0$
satisfies $|\Gamma_0'|\equiv 1$
on $\R/\Z$, $\mathscr{L}(\Gamma_0)=1$, the
dihedral symmetry relation $\tau^{1}_{d_i}(\Gamma_0)=\Gamma_0$ for $i=0,1,2,3$.
By the lower semicontinuity
of the bending energy $E$ and by means of \eqref{eq:mini-bending},
\begin{align*}
E(\Gamma_0) & \le \liminf_{k\to\infty}E(\Gamma_{\vth_{j_k}})
\le \liminf_{k\to\infty}E_{\vth_{j_k}}(\Gamma_{\vth_{j_k}})
\overset{\eqref{eq:mini-bending}}{\le}
\liminf_{k\to\infty}E_{\vth_{j_k}}(\beta)=E(\beta)
\end{align*}
for all $D_2$-symmetric curves  $\beta\in\CK$, which is
the minimizing property 
 \eqref{eq:E_b-symm-minimizer}.
\end{proof}

\begin{proof}[Proof of Theorem~\ref{thm:existence-total}]
Since $\Sigma_{\mathcal{K}}$ was assumed to be non-empty, we have 
$\inf_{\Sigma_\mathcal{K}}S_\vartheta\in [(2\pi)^2,\infty)$, where we 
used  that the Sobolev space 
$W^{2,2}$ continuously embeds\footnote{%
{This follows, e.g., by the characterization of %
Sobolev spaces of real positive smoothness in terms of
Triebel--Lizorkin and Besov spaces~\cite[Prop.~2.1.2]{runst-sickel_1996}
applied to an embedding with constant differential dimension~\cite[Rem.~2.2.3/2]{runst-sickel_1996}.}}
into the fractional Sobolev space
$W^{2-(1/q),q}$ for $q\in (2,4]$
so that the tangent-point energy of a
regular embedded $W^{2,2}$-curve is finite according to
Theorem~\ref{thm:energy-space}.
Hence there exists
a minimal sequence $(\g_j)_j\subset\Sigma_{\mathcal{K}}$ with $
\lim_{j\to\infty}S_\vartheta(\g_j)=\inf_{\Sigma_\mathcal{K}}S_\vartheta.
$
Due to the scale-invariance of $S_\vartheta$ we may assume that $\mathscr{L}(\g_j)=1$
for all $j\in\N$, and we can reparametrize to arclength to obtain a minimal sequence
$\Gamma_j$ with $\abs{\Gamma_j'}=1$ for all $j$, and with
 \begin{equation}\label{eq:minimal-seq}
\lim_{j\to\infty}E_\vartheta(\Gamma_j)=\lim_{j\to\infty}S_\vartheta(\Gamma_j)=
\inf_{\Sigma_\mathcal{K}}S_\vartheta(\cdot).
\end{equation}
Note that the first equation holds since we have $\mathscr{L}(\Gamma_j)=1$ for all 
$j\in\N$.
Moreover, $\Gamma_j\in\Sigma_\mathcal{K}$ for all $j$ due to Lemma~\ref{lem:arclength-sym} for $\ell=L=1$, and therefore, $\|\Gamma_j\|_{L^\infty}\le 1/4$ for 
all $j\in\N$ by virtue of Lemma~\ref{lem:improved-Linfty-bound}.
Now \eqref{eq:minimal-seq} implies that 
\[
\int_{\R/\Z}\abs{\Gamma_j''(s)}^2\,ds=E(\Gamma_j)\le E_\vartheta(\Gamma_j)\le
\inf_{\Sigma_\mathcal{K}}S_\vartheta(\cdot)+1<\infty\Foa j\gg 1,
\] 
which together with the
uniform $L^\infty$-bound and with $\abs{\Gamma_j'}\equiv 1$ for all $j$, yields a uniform bound on the full $W^{2,2}$-norm
of the $\Gamma_j$ for $j\gg 1.$  Consequently, there exists a subsequence $(\Gamma_{j_k})_k
\subset (\Gamma_j)_j$ converging weakly in $W^{2,2}$ and strongly
in $C^1$ to a limit curve $\Gamma_\vartheta\in W^{2,2}(\R/\Z,\R^3)$ as $k\to
\infty$. The $C^1$-convergence implies that $\abs{\Gamma_\vartheta'}\equiv 1$, and 
that $\mathscr{L}(\Gamma_\vartheta)=1$. Moreover, taking the limit $k\to\infty$ in the symmetry relation
\[
\tau^1_{d_i}(\Gamma_{j_k})(t)=\Gamma_{j_k}(t)\quad\Foa t\in\R/\Z
\]
we find  $\tau^1_{d_i}(\Gamma_\vartheta)=\Gamma_\vartheta$. To prove that
$\Gamma_\vartheta$ is contained in  
$\Sigma_\mathcal{K}$ it suffices 
to show that $\Gamma_\vth$ is embedded since
then  $[\Gamma_\vartheta]=
\mathcal{K}$ by
Lemma~\ref{lem:isot}.
The uniform $W^{2,2}$-bound on the $\Gamma_{j_k}$ implies by the
Morrey--Sobolev embedding also a uniform bound on the
$W^{2-(1/q),q}$-norms of the $\Gamma_{j_k}$. This in turn
yields a uniform positive  lower bound $B$ on the the bi-Lipschitz constants 
$\BiLip(\Gamma_{j_k})$ according to Lemma~\ref{lem:bilipschitz}.
{Passing} to the limit $k\to\infty$ in the corresponding inequality
\[
\abs{\Gamma_{j_k}(s)-\Gamma_{j_k}(t)}\ge B\abs{s-t}_{\R/\Z}\quad\Foa
s,t\in\R/\Z
\]
one obtains from the $C^1$-convergence $\Gamma_{j_k}\to\Gamma_\vth$
\[
\abs{\Gamma_\vth(s)-\Gamma_\vth(t)}\ge B\abs{s-t}_{\R/\Z}\quad\Foa
s,t\in\R/\Z.
\]
 By Theorem~\ref{thm:reg-tanpoint} the tangent-point energy is lower-semicontinuous
 with respect to the strong $C^1$-convergence, and therefore
 also the  total scaled energy $S_\vartheta$  with respect to the combined weak $W^{2,2}$- and strong $C^1$-convergence,  which implies
 by virtue of the fact that $\mathscr{L}(\Gamma_\vartheta)=1$,
\begin{equation*}%
\inf_{\Sigma_\mathcal{K}}S_\vth \le S_\vth(\Gamma_\vth)= 
E_\vartheta(\Gamma_\vartheta)
\le\liminf_{k\to\infty}S_\vartheta(\Gamma_{j_k})=
\inf_{\Sigma_\mathcal{K}}S_\vartheta(
\cdot).\qedhere
\end{equation*}
\end{proof}

We can now identify the shape of the $D_2$-elastic unknot  as the once covered circle
contained in the $\mathbf{e_1}$-$\mathbf{e_3}$-plane with starting point on the $\mathbf{e_3}$-axis. This information is even more concrete than stated in
Theorem~\ref{thm:elasym-unknot}, because of our choice  of rotational axes in
\eqref{eq:group-action} representing the dihedral
group $D_2$ on $\R^3$ and of the dihedral parameter transformations  of the domain $\R/\Z$ described in \eqref{eq:inner-action}.

\begin{proof}[Proof of Theorem~\ref{thm:elasym-unknot}]
The once covered circle of length one uniquely minimizes the 
bending energy $E$ in $\CK$ {where $\mathcal{K}$ is the unknot class}
according to the stability
result by Langer and Singer \cite{langer-singer_1985}.
But it also uniquely minimizes the tangent-point energy $\TP_q$  by
the two uniqueness proofs of  Volkmann and  Blatt; see 
\cite[Cor.~5.12]{volkmann_2016}. Therefore 
the once covered circle 
of length one and all its isometric images
also uniquely minimize the total energy $E_\vth$
within $\mathscr{C}(\mathcal{K})$ for any $\vth>0$.
{The dihedral symmetry forces the $E_\vth$-minimizing circles  to lie
in the $\mathbf{e_1}$-$\mathbf{e_3}$-plane, with initial point contained
in $\R\mathbf{e_3}$; see Corollary~\ref{cor:circle}. These properties transfer  
via $C^1$-convergence of $E_{\vth_j}$-minimizers as $\vth_j\to 0$ to
the elastic unknot. }

{Now assume that a $D_{2}$-elastic knot for some 
knot class~$\mathcal K$ is the once covered circle, which represents
the unknot class. According to Lemma~\ref{lem:isot} there is an entire
$C^{1}$-neighborhood of the once covered circle that only
consists of unknots. But by definition of elastic knots this once covered circle
is the $C^1$-limit of $E_{\vth_j}$-minimizing knots $\Gamma_{\vth_j}$,
$\vth_j\to 0$, all representing
the knot  class~$\mathcal{K}$.
 This implies that $\mathcal K$ is the unknot.}

{Notice finally, for the proof of \eqref{eq:infimum-minimal-unknot}, that
Fenchel's lower bound of $2\pi$ for the total curvature of any closed curve 
combined
with H\"older's inequality implies that $(2\pi)^2$ is
 the infimal bending energy for the trivial knot class. This value is attained
by the  once covered circle.
Now~\eqref{eq:infimum-minimal-unknot} immediately
follows from the fact that
according to Corollary~\ref{cor:circle}
there are round $D_{2}$-symmetric
circles.}
\end{proof}

Now we turn to non-trivial knot classes satisfying  assumption
\eqref{eq:infimum-minimal} on the infimal bending energy.
{%
Here we need~\cite[Theorem A.1]{gerlach-etal_2017}
where the
F\'ary--Milnor theorem on the lower bound for 
total curvature of non-trivially knotted curves has been {extended to} the 
$C^1$-closure
of knots. We restate it for the convenience of the reader.

\begin{theorem}[F\'ary--Milnor extension]
\label{thm:fary-milnor-extension}
Let $\mathcal{K}$ be a non-trivial (tame) knot class and suppose $\g$ belongs
to the $C^1$-closure of $\mathscr{C}(\mathcal{K})$. Then $\TC(\g)\ge
4\pi.$
\end{theorem}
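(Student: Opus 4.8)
The plan is to argue by contradiction along the lines of Milnor's original proof of the Fár\nobreakdash-y--Milnor theorem \cite{milnor_1950}, but with the crucial twist that the topological conclusion is drawn not on the (possibly non-embedded) limit curve $\gamma$ but on the embedded approximating knots. So suppose $\gamma$ lies in the $C^1$-closure of $\CK$ for a non-trivial $\mathcal K$, yet $\TC(\gamma)<4\pi$; in particular the total curvature is finite, so the unit tangent of $\gamma$ has bounded variation and Milnor's integral-geometric representation of total curvature is available. Fix a sequence $\gamma_j\in\CK$ with $\gamma_j\to\gamma$ in $C^1$.

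First I would extract a good height direction for $\gamma$. By Milnor's formula $\int_{\S^2}\mu_\gamma(v)\,dv=2\,\TC(\gamma)<8\pi$, where $\mu_\gamma(v)$ counts the local maxima of $t\mapsto\langle\gamma(t),v\rangle$ for almost every $v\in\S^2$. Since $\mu_\gamma\ge1$ and $|\S^2|=4\pi$, the set $\{\mu_\gamma=1\}$ has positive measure, and a standard genericity argument (in the spirit of \cite{milnor_1950}) yields a regular direction $v$, which after a rotation I take to be $\mathbf{e_3}$, for which the height function $h:=\langle\gamma,\mathbf{e_3}\rangle$ has a single local maximum at some parameter $M$ and a single local minimum at some parameter $m$, with $\langle\gamma',\mathbf{e_3}\rangle$ bounded away from zero outside arbitrarily small parameter-neighborhoods of $m$ and $M$ and changing sign there. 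Thus $\gamma$ is monotone in height along each of the two arcs joining $m$ and $M$.

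The heart of the proof is then a localization and stability step transferring this structure to $\gamma_j$. By $C^1$-convergence, $\langle\gamma_j',\mathbf{e_3}\rangle$ is uniformly close to $\langle\gamma',\mathbf{e_3}\rangle$, so for $j$ large it keeps the correct sign on the two monotone arcs, and \emph{all} of its sign changes --- hence all local extrema of the height of $\gamma_j$ --- are confined to two small parameter-neighborhoods of $m$ and $M$; by $C^1$-closeness these map into two small, disjoint spatial balls $B_m$ and $B_M$ around $\gamma(m)$ and $\gamma(M)$ that $\gamma_j$ meets only in those two sub-arcs. Away from $B_m\cup B_M$ the embedded curve $\gamma_j$ is therefore strictly height-monotone, i.e.\ it consists of one ascending and one descending arc joined through the two balls. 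Since a tame embedded arc inside a ball whose endpoints lie on the boundary and which meets the rest of the curve nowhere is unknotted rel.\ endpoints, an ambient isotopy supported in slightly larger disjoint balls straightens the two tangles without changing the knot type of $\gamma_j$; the resulting curve has exactly one height maximum and one height minimum and is hence a one-bridge curve, i.e.\ unknotted. Therefore $\gamma_j$ is unknotted for all large $j$, contradicting $[\gamma_j]=\mathcal K\ne$ unknot, and $\TC(\gamma)\ge4\pi$ follows. (Alternatively, openness of the knot type in $C^1$, Lemma~\ref{lem:isot}, can be invoked at the final step.)

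The main obstacle is exactly this localization step: one must ensure that the possibly many extra critical points of the height of $\gamma_j$ created near the extrema of $\gamma$ stay inside disjoint balls that the remainder of $\gamma_j$ avoids, and that confined arcs in such balls carry no knotting. This forces a careful choice of the direction $v$, so that the sign changes of $\langle\gamma',\mathbf{e_3}\rangle$ are genuinely isolated, together with a quantitative use of the $C^1$-distance to fix the radii of $B_m$ and $B_M$ before letting $j\to\infty$. Note that the classical lower semicontinuity of total curvature is of no help here, since it bounds $\TC(\gamma)$ from above by $\liminf_j\TC(\gamma_j)$ and thus points in the wrong direction; the genuinely new content lies in passing the topological obstruction, rather than the energy bound, to the limit.
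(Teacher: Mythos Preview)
The present paper does not prove this theorem; it merely restates \cite[Theorem~A.1]{gerlach-etal_2017} for the reader's convenience (see the sentence immediately preceding the statement), so there is no in-paper argument to compare yours against. Your strategy---run Milnor's bridge-number argument not on the possibly singular limit $\gamma$ but on the embedded approximants $\gamma_j$, using $C^1$-closeness to corral all height-extrema of $\gamma_j$ into two small spatial balls where they can be ironed out---is the natural one and is, as far as I am aware, essentially the approach taken in the cited reference.

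One step in your write-up is wrong as stated and needs correction. The claim ``a tame embedded arc inside a ball whose endpoints lie on the boundary \ldots\ is unknotted rel.\ endpoints'' is false in general: a proper arc in $B^3$ that ties a local trefoil before reaching the boundary is \emph{not} isotopic rel endpoints to a diameter (close it off by the straight chord and you obtain a genuine trefoil). What actually makes your straightening step go through is that the sub-arcs of $\gamma_j$ lying in $B_m$ and $B_M$ are $C^1$-close to \emph{short} sub-arcs of the $C^1$-curve $\gamma$, whose tangent varies little over such a short parameter window; hence these arcs are graphs over their chords and therefore unknotted rel endpoints. You should invoke this smallness explicitly rather than the non-theorem. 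Your genericity step, by contrast, can be justified cleanly: combining Milnor's formula $\int_{\S^2}\mu_\gamma\,dA=2\,\TC(\gamma)$ with the Cauchy--Crofton formula for the tangent indicatrix on $\S^2$, namely $\int_{\S^2} n(v)\,dA=4\,\TC(\gamma)$ where $n(v)$ counts the zeros of $\langle\gamma',v\rangle$, and using the pointwise inequality $n\ge2\mu$, one gets $n=2\mu$ for a.e.\ $v$; together with $\mu(v)=1$ this forces the zero set of $\langle\gamma',v\rangle$ to be exactly $\{m,M\}$, which is precisely what your localization needs.
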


This result permits to} prove the following
rigidity result, which is the essential ingredient for the proof of
Theorem~\ref{thm:mainthm} presented  in Section~\ref{sec:example}.

\begin{theorem}[Rigidity \& strong convergence]\label{thm:rigidity}
If a knot class $\mathcal{K}$ satisfies
\eqref{eq:infimum-minimal}
then any $D_2$-elastic knot $\Gamma_0$ for $\mathcal{K}$
is (up to  reparametrization)
{the} tangential pair of
co-planar circles with exactly one point in common
{described in Corollary~\ref{cor:circle}}. In addition, any
subsequence of $D_2$-symmetric
$E_\vth$-minimizers $\Gamma_\vth\in\CK$
converges 
strongly in $W^{2,2}$ to {(an isometric image of)} $\Gamma_0$ as $\vth\to 0$.
\end{theorem}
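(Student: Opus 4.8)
The plan is to determine $\Gamma_0$ in two stages --- first pin down its energy and curvature exactly, then read off its shape from the resulting rigidity together with the $D_2$-symmetry --- and to obtain the strong convergence at the end by a soft norm-convergence argument. As a first step I would show that $E(\Gamma_0)=(4\pi)^2$, that $\TC(\Gamma_0)=4\pi$, and that $\Gamma_0$ has \emph{constant} curvature $\kappa\equiv 4\pi$. Being a $D_2$-elastic knot, $\Gamma_0$ is the strong $C^1$-limit of $D_2$-symmetric curves in $\CK$ by Theorem~\ref{thm:symmetric-elastic-knots}, hence it lies in the $C^1$-closure of $\CK$; since $\mathcal K$ is non-trivial, Theorem~\ref{thm:fary-milnor-extension} gives $\TC(\Gamma_0)\ge 4\pi$. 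As $\Gamma_0$ is arclength-parametrized of length one, H\"older's inequality yields
\[ E(\Gamma_0)=\int_{\R/\Z}\abs{\Gamma_0''}^2\,ds\ge\Big(\int_{\R/\Z}\abs{\Gamma_0''}\,ds\Big)^2=\TC(\Gamma_0)^2\ge(4\pi)^2. \]
On the other hand, \eqref{eq:E_b-symm-minimizer} together with the hypothesis \eqref{eq:infimum-minimal} gives $E(\Gamma_0)\le(4\pi)^2$. Hence all inequalities are equalities; in particular equality in H\"older forces $\abs{\Gamma_0''}$ to be constant, so $\abs{\Gamma_0''}\equiv\TC(\Gamma_0)=4\pi$ almost everywhere.

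Next I would identify the shape. The equality case just obtained means that $\Gamma_0$ is a closed curve of constant curvature $4\pi$ and total curvature exactly $4\pi$ lying in the $C^1$-closure of a non-trivial knot class. By the classification of such extremal curves (the equality case of F\'ary--Milnor analyzed in \cite{gerlach-etal_2017}), $\Gamma_0$ is, up to isometry, either a doubly covered circle or a member of the tangential-pair family $\tpc[\varphi]$, $\varphi\in[0,\pi]$. The $D_2$-symmetry of $\Gamma_0$ then selects the coplanar configuration: the doubly covered circle is excluded because the relation $\tau^1_{d_2}(\Gamma_0)=\Gamma_0$, i.e.\ $R_2\circ\Gamma_0(t-\tfrac12)=\Gamma_0(t)$, together with the half-period identity $\Gamma_0(t-\tfrac12)=\Gamma_0(t)$ of a twice-covered circle would force $\Gamma_0(\R/\Z)\subset\ker(\Id-R_2)=\R\mathbf{e_2}$, which is impossible; and running the argument in the proof of Corollary~\ref{cor:circle}(ii) for a general tangential pair shows that $D_2$-symmetry pins the tangent at the self-intersection to $\mathbf{e_2}$ and forces coplanarity, i.e.\ $\varphi=\pi$, with the two admissible positions in $\mathbf{e_1}^\perp$ and $\mathbf{e_3}^\perp$. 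Thus $\Gamma_0$ is, up to reparametrization, the coplanar tangential pair $\tpc[\pi]$ described in Corollary~\ref{cor:circle}. I expect this identification to be the main obstacle: constant curvature alone is not rigid in $\R^3$ (a $W^{2,2}$-curve carries no torsion information, and non-coplanar $\tpc[\varphi]$ already show that planarity cannot be forced from the local data), so the classification must genuinely exploit the global total-curvature-$4\pi$ structure of limits of non-trivial knots rather than a pointwise ODE argument.

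Finally I would prove the strong $W^{2,2}$-convergence. Let $\vth_j\to 0$ and let $\Gamma_{\vth_j}$ be $D_2$-symmetric $E_{\vth_j}$-minimizers. The uniform bound \eqref{eq:uniform-norm} provides a subsequence converging weakly in $W^{2,2}$ and strongly in $C^1$ to some limit, which by the previous step is an isometric image of $\tpc[\pi]$; call it $\Gamma_0$. To upgrade this to strong $W^{2,2}$-convergence it suffices to show $E(\Gamma_{\vth_j})\to E(\Gamma_0)$. For every $D_2$-symmetric $\beta\in\CK$, minimality gives $E(\Gamma_{\vth_j})\le E_{\vth_j}(\Gamma_{\vth_j})\le E_{\vth_j}(\beta)=E(\beta)+\vth_j\TP_q^{1/(q-2)}(\beta)\to E(\beta)$, where the last term is finite by Theorem~\ref{thm:energy-space}; taking the infimum over $\beta$ yields $\limsup_j E(\Gamma_{\vth_j})\le(4\pi)^2=E(\Gamma_0)$, while lower semicontinuity of $E$ gives the reverse inequality for $\liminf_j$. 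Hence $\norm[L^2]{\Gamma_{\vth_j}''}^2=E(\Gamma_{\vth_j})\to E(\Gamma_0)=\norm[L^2]{\Gamma_0''}^2$; since $\Gamma_{\vth_j}''\rightharpoonup\Gamma_0''$ weakly in $L^2$, convergence of the norms forces $\Gamma_{\vth_j}''\to\Gamma_0''$ strongly in $L^2$, and combined with the $C^1$-convergence of $\Gamma_{\vth_j}$ and $\Gamma_{\vth_j}'$ this is exactly strong $W^{2,2}$-convergence. As every subsequence of the original family admits a further subsequence converging in this way to an isometric image of $\tpc[\pi]$, the asserted strong convergence of any such sequence follows.
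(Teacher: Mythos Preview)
Your overall architecture matches the paper's: constant curvature from equality in H\"older, placement in the $\tpc[\varphi]$ family via \cite{gerlach-etal_2017}, elimination of the doubly covered circle by the $R_2$-periodicity contradiction, and upgrading to strong $W^{2,2}$-convergence via norm convergence. The substantive gap is in the shape-identification step. The classification you invoke from \cite{gerlach-etal_2017} (their Corollary~3.4) takes as input a closed arclength curve of constant curvature \emph{possessing at least one self-intersection}; it is not a statement about the ``equality case of F\'ary--Milnor'' for arbitrary curves in the $C^1$-closure of a nontrivial class. You therefore need to prove that $\Gamma_0$ is not embedded before you can place it in the $\tpc[\varphi]$ family, and this you have not done.

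The paper closes this gap with a Langer--Singer stability argument, and it is precisely here that the \emph{second} equality in hypothesis~\eqref{eq:infimum-minimal}, namely $(4\pi)^2=\inf_{\CK}E(\cdot)$ over the whole class, is used --- your write-up only exploits the first equality (the $D_2$-symmetric infimum) to get $E(\Gamma_0)\le(4\pi)^2$. The argument runs: if $\Gamma_0$ were embedded, then by Lemma~\ref{lem:isot} one would have $\Gamma_0\in\CK$; since $E(\Gamma_0)=(4\pi)^2=\inf_{\CK}E$, the curve would be a global minimizer of $E$ on all of $\CK$, hence a stable $E$-critical point, and \cite{langer-singer_1985} would force it to be the round circle, contradicting nontriviality of $\mathcal K$. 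That this non-injectivity step is genuinely delicate in the $D_2$-symmetric setting, and \emph{does} rely on the unrestricted infimum being $(4\pi)^2$, is exactly the point the paper flags in Section~\ref{sec:double-points}: without that extra hypothesis one cannot directly invoke Langer--Singer, because \eqref{eq:E_b-symm-minimizer} alone only gives minimality among $D_2$-symmetric competitors.
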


\begin{proof}
{From Theorem~\ref{thm:elasym-unknot} we infer that $\mathcal K$ is nontrivial.}
Applying the extended F\'ary--Milnor Theorem~\ref{thm:fary-milnor-extension}
to any $D_2$-elastic knot $\Gamma_0$ which according to
Theorem~\ref{thm:symmetric-elastic-knots} lies in the $C^1$-closure of $\CK$,
 we estimate by means of H\"older's
inequality
\begin{equation}\label{eq:const-curvature}
(4\pi)^2\overset{\textnormal{Thm.~\ref{thm:fary-milnor-extension}}}{\le}\left(\int_{\Gamma_0}\kappa_{\Gamma_0}\,ds\right)^2\le
E(\Gamma_0)\overset{\eqref{eq:E_b-symm-minimizer}}{\le}\inf_{\beta\in\CK\atop \beta \,\textnormal{$D_2$-symmetric}}E(\beta)\overset{\eqref{eq:infimum-minimal}}{=}(4\pi)^2.
\end{equation}
Consequently, we have equality everywhere, in particular equality in
H\"older's inequality, which implies a constant integrand $\kappa_{\Gamma_0}=4\pi$ a.e. on $\R/\Z$.

Next we prove that $\Gamma_0$ has at least one double point. Indeed,
otherwise, by
Lemma~\ref{lem:isot} the curve 
$\Gamma_0$ would be
contained in $\CK$ since $\Gamma_0$ is the strong
$C^1$-limit of the $E_{\vth_j}$-minimizers $\Gamma_{\vth_j}\in\CK$
 as $\vth_j\to 0$. Combining the minimizing property
\eqref{eq:E_b-symm-minimizer} of $\Gamma_0$ with our assumption
\eqref{eq:infimum-minimal}, which also  implies that $(4\pi)^2=
\inf_{\CK}E$,
 we find that $\Gamma_0$ is an embedded
minimizer of the bending energy within $\mathscr{C}(\mathcal{K})$,
hence a stable critical point of $E$. Applying the stability result
of Langer and Singer \cite{langer-singer_1985}, we find $\Gamma_0$
to be the once covered circle representing the unknot class, 
which contradicts the {fact} that $\mathcal{ K}$
is nontrivial.  
 So, we have shown
that {$\Gamma_{0}$} is not injective.

According to \cite[Cor.~3.4]{gerlach-etal_2017} the elastic $D_2$-symmetric
knot $\Gamma_0$ belongs, up to isometry and reparametrization, to the 
one-parameter family of tangentially intersecting circles $\tpc[\varphi]$
for $\varphi\in [0,\pi]$ explicitly given in 
\cite[Formula (3.2)]{gerlach-etal_2017}. One easily checks that 
the only possible candidates that may respect the $D_2$-symmetry are
the doubly covered circle $\tpc[0]$ and the tangential pair of
co-planar circles $\tpc[\pi]$ with only one touching point. 

But (any isometric image of) 
$\tpc[0]$ is not only $1$-periodic but also $1/2$-periodic, so that
the symmetry assumption $\tau^{1}_{d_2}(\tpc[0])=\tpc[0]$ would lead to
\begin{align}\label{eq:tpc_0-excluded}
\tpc[0](t) & = \tpc[0](t+1/2)=\tau^{1}_{d_2}(\tpc[0])(t+1/2)
\overset{\eqref{eq:group-action}}{=}
R_2\circ\tpc[0]\big(\psi_2^{1}(t+1/2)\big)\notag\\
& \overset{\eqref{eq:inner-action}}{=} R_2\circ \tpc[0](t)\Foa t\in\R/\Z,
\end{align}
so that $\tpc[0](t)\in\R\mathbf{e_2}$ for all $t\in\R/\Z$, which
is a contradiction.

So, (up to isometry) the only remaining option is $\Gamma_0=\tpc[\pi]$, and that this
curve indeed has the $D_2$-symmetry has been verified in Example
\ref{ex:tpc_pi}.

{It remains to establish strong convergence.}
Now that we have identified the   weak $W^{2,2}$-limit of the
$E_{\vth_j}$-minimizers for any sequence $\vth_j\to 0$,
we can use our assumption
\eqref{eq:infimum-minimal}
to find
for any given $\delta >0$ a $D_2$-symmetric curve $\beta\in\CK$
such that $E(\beta)\le (4\pi)^2 +\delta$.
The minimizing property of the $\Gamma_\vth$ yields
\[
(4\pi)^2\le E(\Gamma_\vth)\le E_\vth(\Gamma_\vth)\le E_\vth(\beta)
\le (4\pi)^2 +\delta +\vth \TP_q(\beta)^{\frac1{q-2}}<\infty,
\]
where we used the classic F\'ary-Milnor theorem for the first inequality.
Taking the limit $\vth\to 0$ gives
\[
(4\pi)^2\le\liminf_{\vth\to 0}E(\Gamma_\vth)\le
\limsup_{\vth\to 0}E(\Gamma_\vth)\le (4\pi)^2+\delta\Foa \delta >0.
\]
Therefore, $\lim_{\vth\to 0}E(\Gamma_\vth)=(4\pi)^2=E(\Gamma_0)$, since 
$\kappa_{\Gamma_0}=4\pi$ a.e.\ on $\R/\Z$. This, together with the 
$C^1$-convergence of the $\Gamma_\vth$ to the same limit (up to
{permutation of the axes} and reparametrization) leads to convergence 
in the $W^{2,2}$-norm.
Combining this with the weak convergence to the now unique weak
limit $\Gamma_0$ (up to
{permutation of the axes} and reparametrization) gives
finally strong convergence in $W^{2,2}$ by the subsequence
principle. 
\end{proof}

\section{Infimal bending energy on torus knots with dihedral symmetry}\label{sec:example}

We now investigate the convergence properties of the $D_2$-symmetric torus knots
$g_3^\epsilon$  introduced in Example~\ref{ex:D2-symmetric-torus-knot} fixing $\ell:=1$ and $\rho:=\epsilon^2$.
\begin{lemma}[$W^{2,2}$-convergence of $D_2$-symmetric torus knots]
\label{lem:convergence}
Let  $\ell=1$, $\rho=\epsilon^2,$ and $b\in\Z\setminus\{1,-1\}$ be odd. Then   the
$D_2$-symmetric
 torus knots $g^\epsilon_3\in \Sigma_{\mathcal{T}(2,b)}$ constructed in Example
\ref{ex:D2-symmetric-torus-knot} for $\ell=1$ and $\rho=\epsilon^2$
converge strongly in $W^{2,2}$ to
the tangential pair of co-planar circles $\tpc[\pi]$ (see
Example~\ref{ex:tpc_pi}) as $\epsilon\to 0$.
\end{lemma}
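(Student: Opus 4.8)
The plan is to reduce the claimed convergence of the closed curves $g_3^\epsilon$ to a convergence statement for their generating arcs, and then to estimate the $W^{2,2}$-distance piece by piece. Since the glueing map $\alpha\mapsto g$ in~\eqref{eq:glueing} is linear and, on each of the four quarter-intervals, merely composes $\alpha$ with a fixed rotation $R_i\in SO(3)$ and an affine reparametrization $\psi_i^1$ of unit speed, the $L^2$-norms of the curve and of its first two derivatives are, on each quarter, equal to the corresponding norms of $\alpha$; integrating over the four pieces gives $\norm[W^{2,2}(\R/\Z,\R^3)]{g_3^\epsilon-\tpc[\pi]}=2\norm[W^{2,2}((0,1/4),\R^3)]{\alpha_3^\epsilon-\alpha_2}$, where $\alpha_2$ is the generating semicircle~\eqref{eq:generating-semicircle} of the $\tpc[\pi]$-curve in the $\mathbf{e_1}$-$\mathbf{e_2}$-plane. (Both $\alpha_3^\epsilon$ and $\alpha_2$ satisfy the \emph{linear} endpoint conditions~\eqref{eq:point-constraints} and~\eqref{eq:tangent-constraints}, so their difference is again admissible for the glueing by Corollary~\ref{cor:glueing}.) It therefore suffices to prove $\alpha_3^\epsilon\to\alpha_2$ in $W^{2,2}((0,1/4),\R^3)$ as $\epsilon\to 0$.

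For this I would split $[0,1/4]$ into the helical interval $I_1^\epsilon:=[0,(b+1)\epsilon/2]$, the semicircular interval $I_2^\epsilon:=[(b+1)\epsilon/2,\,1/4-(b+1)\epsilon/2]$, and the straight interval $I_3^\epsilon:=[1/4-(b+1)\epsilon/2,\,1/4]$, and estimate $\norm[W^{2,2}(I_j^\epsilon)]{\alpha_3^\epsilon-\alpha_2}$ separately; since the three $L^2$-integrals are additive over this partition, it is enough that each tends to $0$. On $I_2^\epsilon$ the arc $\alpha_3^\epsilon=\sigma^\epsilon$ is the semicircle of radius $r=r(\epsilon)$ centered at $(\epsilon^2+r,(b+1)\epsilon/2,0)$; by~\eqref{eq:r-condition} one has $r\to 1/(4\pi)$, the center tends to $(1/(4\pi),0,0)$, and the phase $\tfrac1r(t-(b+1)\epsilon/2)$ differs from $4\pi t$ by $O(\epsilon)$ uniformly on this bounded interval. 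Hence $\sigma^\epsilon$ and its first two derivatives converge \emph{uniformly} to $\alpha_2$ on $I_2^\epsilon$, so $\norm[W^{2,2}(I_2^\epsilon)]{\sigma^\epsilon-\alpha_2}=O(\epsilon)\to0$.

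The two remaining pieces are controlled through the smallness of their length, $|I_1^\epsilon|=|I_3^\epsilon|=(b+1)\epsilon/2\to0$. On $I_3^\epsilon$ the arc $\sigma^\epsilon$ is a straight segment, so $(\sigma^\epsilon)''\equiv0$, whereas $\alpha_2''$ stays bounded (by $4\pi$); the difference of second derivatives is $O(1)$ but is integrated over an interval of length $O(\epsilon)$, while the lower-order terms are uniformly $O(\epsilon)$, so $\norm[W^{2,2}(I_3^\epsilon)]{\sigma^\epsilon-\alpha_2}\to0$. On $I_1^\epsilon$ I would bound $\norm[W^{2,2}(I_1^\epsilon)]{h^\epsilon-\alpha_2}$ by $\norm[W^{2,2}(I_1^\epsilon)]{h^\epsilon}+\norm[W^{2,2}(I_1^\epsilon)]{\alpha_2}$, the second summand vanishing since $\alpha_2$ is smooth and $|I_1^\epsilon|\to0$.

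The delicate point, and the only place where the precise scaling $\rho=\epsilon^2$ is used, is the first summand: differentiating $h^\epsilon$ from~\eqref{eq:helical-part} twice produces, through $\phi_\epsilon=\pi\phi(\cdot/\epsilon)$, a factor $\rho\,\epsilon^{-2}=1$, so that $(h^\epsilon)''$ stays \emph{uniformly bounded} (in terms of $\norm[L^\infty]{\phi'}$ and $\norm[L^\infty]{\phi''}$) instead of blowing up like $\epsilon^{-1}$; consequently $\int_{I_1^\epsilon}\abs{(h^\epsilon)''}^2=O(\epsilon)$, and the same scaling gives $\abs{(h^\epsilon)'-\mathbf{e_2}}=O(\epsilon)$ and $\abs{h^\epsilon}=O(\epsilon)$, whence $\norm[W^{2,2}(I_1^\epsilon)]{h^\epsilon}\to0$. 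Combining the three estimates yields $\alpha_3^\epsilon\to\alpha_2$ in $W^{2,2}$, and hence $g_3^\epsilon\to\tpc[\pi]$ strongly in $W^{2,2}$. I expect the main obstacle to be exactly this second-derivative estimate on the vanishing helical piece, where one must verify that the curvature of the shrinking helix does not persist in the limit — it is here that a cruder choice of $\rho$ (for instance $\rho$ bounded away from $0$) would make the argument fail.
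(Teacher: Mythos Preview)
Your argument is correct and follows essentially the same route as the paper: reduction to the generating arcs via the linearity and isometry of the glueing~\eqref{eq:glueing}, the same three-interval decomposition $I_1^\epsilon\cup I_2^\epsilon\cup I_3^\epsilon$, and the crucial observation that $\rho=\epsilon^2$ keeps $(h^\epsilon)''$ uniformly bounded on the shrinking helical piece. The only minor difference is in the treatment of the lower-order terms: the paper estimates only $\norm[L^2]{(\alpha_3^\epsilon)''-\alpha_2''}$ explicitly and then recovers the first-derivative and zeroth-order convergence from Poincar\'e's inequality on $\R/\Z$ together with the fundamental theorem of calculus, whereas you bound all three orders directly on each subinterval; both variants are equally valid.
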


\begin{corollary}[Minimal bending energy for $D_2$-symmetric torus knots]
\label{cor:infimum-trefoil}
The torus knot classes $\TL(2,b)$ for odd $b\in\Z\setminus\{1,-1\}$ are the only knot classes $\mathcal{K}$ that satisfy
\begin{equation}%
\tag{\ref{eq:infimum-minimal}}
\inf_{\beta\in\CK\atop \beta\,\,\textnormal{$D_2$-symmetric}}E(\beta)=(4\pi)^2
=\inf_{\CK}E(\cdot).
\end{equation}
\end{corollary}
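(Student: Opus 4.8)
The plan is to establish the two inclusions in the asserted characterization. \emph{That every $\TL(2,b)$ (odd $b\neq\pm1$) satisfies~\eqref{eq:infimum-minimal}} I would read off from Lemma~\ref{lem:convergence}: the $D_2$-symmetric representatives $g_3^\epsilon\in\Sigma_{\mathcal{T}(2,b)}$ converge strongly in $W^{2,2}$ to $\tpc[\pi]$. Since $\tpc[\pi]$ consists of two circles of radius $1/(4\pi)$, it has constant curvature $4\pi$ and total length one, so $E(\tpc[\pi])=(4\pi)^2$, and strong $W^{2,2}$-convergence gives $E(g_3^\epsilon)\to(4\pi)^2$. As each $g_3^\epsilon$ is a $D_2$-symmetric element of $\CK$, this shows that both infima in~\eqref{eq:infimum-minimal} are at most $(4\pi)^2$; the matching lower bound follows from the F\'ary--Milnor theorem applied to the nontrivial class $\TL(2,b)$ together with H\"older's inequality on curves of length one, so~\eqref{eq:infimum-minimal} holds.

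\emph{That these are the only such classes} is the substantive direction. Suppose $\mathcal{K}$ satisfies~\eqref{eq:infimum-minimal}. By Theorem~\ref{thm:elasym-unknot} the unknot has infimal bending energy $(2\pi)^2\ne(4\pi)^2$, so $\mathcal{K}$ must be nontrivial, and Theorem~\ref{thm:rigidity} applies: the $D_2$-elastic knot for $\mathcal{K}$ is, up to isometry and reparametrization, the tangential pair of co-planar circles $\tpc[\pi]$, and the $D_2$-symmetric $E_\vth$-minimizers $\Gamma_\vth\in\CK$ converge strongly in $W^{2,2}$ to $\tpc[\pi]$. In particular $E(\Gamma_\vth)\to(4\pi)^2$; combined with $\TC(\Gamma_\vth)^2\le E(\Gamma_\vth)$ and the bound $\TC(\Gamma_\vth)\ge4\pi$ from F\'ary--Milnor this yields $\TC(\Gamma_\vth)\to4\pi$, hence $\inf_{\CK}\TC=4\pi$. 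By Milnor's characterization of total curvature, $\mathcal{K}$ therefore has bridge number exactly two.

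The remaining and genuinely hardest step is to upgrade ``bridge number two'' to ``$\mathcal{K}=\TL(2,b)$'', since not every $2$-bridge class is of torus type. Here I would argue that for small $\vth$ the knotting of $\Gamma_\vth$ concentrates in a shrinking neighbourhood of the single tangential self-intersection of $\tpc[\pi]$ at the origin, whose common tangent is parallel to $\mathbf{e_2}$ by Corollary~\ref{cor:circle}. By the $D_2$-symmetry the two strands entering this neighbourhood form a rational tangle that is invariant under the $\pi$-rotation $R_2$ about the $\mathbf{e_2}$-axis, and a symmetric tangle of this kind closing up inside a neighbourhood of $\tpc[\pi]$ can only be of $(2,b)$-type; this forces $\mathcal{K}=\TL(2,b)$ for some odd $b$. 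Equivalently, this identification is precisely the content of Theorem~\ref{thm:mainthm}(ii), which may be invoked directly. Combining the two inclusions gives the corollary, the sole obstacle being the topological localization argument just described---the first inclusion and the reduction to bridge number two being routine consequences of Lemma~\ref{lem:convergence}, Theorem~\ref{thm:rigidity}, and F\'ary--Milnor.
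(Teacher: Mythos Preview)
Your first inclusion is essentially the paper's argument, with one small slip: the curves $g_3^\epsilon$ are parametrized on $\R/\Z$ but are \emph{not} of unit length (the helical part has speed $\sqrt{1+\pi^2\epsilon^2\phi'^2(t/\epsilon)}>1$), so they do not lie in $\CK$ as you claim. The paper fixes this by rescaling: since $\mathscr{L}(g_3^\epsilon)\to\mathscr{L}(\tpc[\pi])=1$, the unit-length curves $g_3^\epsilon/\mathscr{L}(g_3^\epsilon)$ still converge in $W^{2,2}$ to $\tpc[\pi]$ and do belong to $\CK\cap\Sigma_{\mathcal{T}(2,b)}$.

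For the ``only'' direction you take a substantially longer route than the paper. The paper simply observes that \eqref{eq:infimum-minimal} contains the \emph{right} equality $\inf_{\CK}E=(4\pi)^2$, which by \cite[Corollary~4.4]{gerlach-etal_2017} already characterizes $\mathcal{K}$ as some $\TL(2,b)$; no symmetry, no rigidity, no tangle analysis is needed. Your argument instead passes through Theorem~\ref{thm:rigidity} to identify the $D_2$-elastic knot as $\tpc[\pi]$, then reduces to bridge number two, and finally either sketches a $D_2$-invariant tangle argument (which, as you acknowledge, is the hard part and is not carried out) or invokes Theorem~\ref{thm:mainthm}(ii). The latter is logically awkward: in the paper Theorem~\ref{thm:mainthm} is \emph{derived from} Corollary~\ref{cor:infimum-trefoil} (parts (i) and (iii) use it explicitly), so appealing to it here inverts the intended dependency, even though the proof of part~(ii) alone happens not to be circular. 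Your detour does buy something---it shows how the $D_2$-symmetry constraint alone, via the shape of the elastic limit, could in principle pin down the knot class---but for the corollary as stated the external citation on the unconstrained infimum is both shorter and cleaner.
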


\begin{proof}
The convergence of torus knots established in Lemma~\ref{lem:convergence} together with an appropriate rescaling to unit length, i.e., the $W^{2,2}$-convergence of
$g_3^\epsilon /\mathscr{L}(g_3^\epsilon)\in\Sigma_{\mathcal{T}(2,b)}$ to $\tpc[\pi]$,
allows us to identify the infimal bending energy $(4\pi)^2=E(\tpc[\pi])$
on the class of $D_2$-symmetric curves in $\mathscr{C}(\TL(2,b))$, since by
the Morrey--Sobolev embedding we have $g_3^\epsilon\to\tpc[\pi]$ in $C^1$, and hence $\mathscr{L}(g^\eps_3)\to\mathscr{L}(\tpc[\pi])=1$, and therefore also
$g_3^\epsilon/\mathscr{L}(g_3^\epsilon)\to\tpc[\pi]$ in $W^{2,2}$ as $\epsilon\to 0$.
That the torus knot classes $\TL(2,b)$ are the only possible knot classes
to satisfy the right equality in~\eqref{eq:infimum-minimal} was proven in 
\cite[Corollary 4.4]{gerlach-etal_2017}.
\end{proof}

\begin{proof}[Proof of Lemma~\ref{lem:convergence}]
For simplicity we  restrict the explicit arguments to the case that
{$b\ge3$}.
By symmetry it suffices to prove the $W^{2,2}$-convergence  of the 
generating arcs $\alpha^\epsilon:=\alpha_3^\epsilon$ of $g^\epsilon_3$
to the corresponding generating arc $\alpha_2$ of $\tpc[\pi]$ defined in
Example~\ref{ex:tpc_pi}. 
Since $\alpha_3^\epsilon$ is piecewise defined (see \eqref{eq:generating-torus-knot}) we focus first on the interval $I_1(\epsilon):=[0,(b+1)\epsilon/2]$ where $\alpha_3^\epsilon=h^\epsilon$, and obtain by direct computation from the explicit 
expressions \eqref{eq:helical-part} and \eqref{eq:phi} for the helical part $h^\epsilon$ and 
the parameter transformation $\phi$ (for $\rho=\epsilon^2$)
\begin{equation}\label{eq:second-deriv}
\abs{(h^\epsilon)''(t)}^2=\pi^4\phi'^4(t/\epsilon)+\pi^2
\phi''^2(t/\epsilon)\quad\Foa t\in  I_1(\epsilon),
\end{equation}
which implies
\begin{align}
\label{eq:L2-curvature0}
\norm[L^2(I_1(\epsilon),\R^3)]{(h^\epsilon)''}^2 &
\le\epsilon\pi^2\int_0^{(b+1)/2}\big(\pi^2\phi'^4(z)+1\big)\,dz,
\end{align}
where we changed variables to $z:=t/\epsilon$ and used that $ \abs{\phi''}
\le1$
on $[0,(b+1)/2)$.
Now, $\phi'=1$ on $ [0,(b-1)/2]$ whereas $\phi'(t)=-(t-(b+1)/2)$ for $
t\in [(b-1)/2,(b+1)/2]$ according to \eqref{eq:phi} so that we obtain from
\eqref{eq:L2-curvature0}
\begin{equation}\label{eq:L2-helix}
\norm[L^2(I_1(\epsilon),\R^3)]{(h^\epsilon)''}^2 \le\epsilon\pi^2\left[
\pi^2\left(\frac{b-1}2+\frac15\right)+\frac{b+1}2\right]<\epsilon\pi^4(b+1).
\end{equation}
The prospective arclength parametrized limit curve $\tpc[\pi]$ has constant curvature $\abs{\tpc[\pi]''}=4\pi$
a.e., so that by virtue of \eqref{eq:L2-helix}
\begin{equation}\label{eq:first-deviation}
\norm[L^2(I_1(\epsilon),\R^3)]{(\alpha^\epsilon)''-\alpha_2''}^2
\overset{\eqref{eq:L2-helix}}{\le} 2\epsilon\pi^4(b+1)+(4\pi)^2(b+1)\epsilon.
\end{equation}
Now we consider the interval $I_2(\epsilon):=[(b+1)\epsilon/2,(1/4)-(b+1)\epsilon/2]$ and recall
our condition \eqref{eq:r-condition} on the radius $r$ of the stadium curve $\sigma^\epsilon$, namely (now for $\ell=1$) 
\begin{equation}\label{eq:r-condition-1}
(b+1)\epsilon +\pi r =1/4.
\end{equation}
{This} implies that the auxiliary function $f(t,s):=\abs{r^{-1}
(t-(b+1)s/2)-4\pi t  }^2$ satisfies for any $\epsilon\in (0,(8(b+1))^{-1})$
\begin{equation}\label{eq:auxiliary-est}
0\le f(t,s)< {1024}\pi^2(b+1)^2\epsilon^2\quad\textnormal{
for all $(t,s)\in [0,1]\times [0,2\epsilon]$}
\end{equation}
since $\sqrt{f(t,s)}<4\pi(b+1)\abs{8\eps t-s}
\le 4\pi(b+1)\max\br{8\eps,2\eps}
\le 32\pi(b+1)\eps$.
Inequality~\eqref{eq:auxiliary-est}
can be used to estimate
\begin{align*}
\|(\alpha^\epsilon)''-\alpha_2''\|_{L^2(I_2(\epsilon),\R^3)}^2
& \le 2\int_{I_2(\epsilon)}
\big|(1/r) -4\pi\big|^2\,dt\\
 +2\cdot(4\pi)^2 \int_{I_2(\epsilon)}
&\left|\begin{pmatrix}
\cos((t-(b+1)\epsilon/2)/r)-\cos(4\pi t)\\
-\sin((t-(b+1)\epsilon/2)/r)+\sin(4\pi t)\\
0\end{pmatrix}\right|^2\,dt.\notag 
\end{align*}
The first integrand equals $f(1,0)$, and the second
integrand can be estimated from above by $2f(t,\epsilon)$ for $t\in I_2(\epsilon)$, since both, $\cos $ and $\sin$, have Lipschitz 
constant $1$. Therefore, we can apply the auxiliary estimate 
\eqref{eq:auxiliary-est} to arrive at
\begin{equation}\label{eq:second-deviation}
\|(\alpha^\epsilon)''-\alpha_2''\|_{L^2(I_2(\epsilon),\R^3)}^2
< \frac12(1+2(4\pi)^2) {1024}\pi^2(b+1)^2\epsilon^2,
\end{equation}
where we also used that $\mathscr{L}^1(I_2(\epsilon))< 1/4.$
Finally, on the interval $I_3(\epsilon):=[(1/4)-(b+1)\epsilon/2,1/4]$ the
stadium curve $\sigma|_{I_3(\epsilon)}=\alpha^\epsilon$ is a straight segment so that
\begin{equation}\label{eq:third-deviation}
\|(\alpha^\epsilon)''-\alpha_2''\|_{L^2(I_3(\epsilon),\R^3)}^2=
\|\alpha_2''\|_{L^2(I_3(\epsilon),\R^3)}^2=(4\pi)^2(b+1)\epsilon/2.
\end{equation}
Summarizing \eqref{eq:first-deviation}, \eqref{eq:second-deviation}, and
\eqref{eq:third-deviation} we obtain   a constant $C_1\ge 1$ independent of $\epsilon$
such that 
\begin{equation}\label{eq:L2-convergece}
\|(\alpha^\epsilon)''-\alpha_2''\|_{L^2([0,1/4],\R^3)}\le C_1\sqrt{\epsilon}\quad\Foa 
0< \epsilon <\frac1{8(b+1)},
\end{equation}
which by means of Poincar\'e's inequality~\cite[Section 5.8.1]{evans_1998}
applied to $\g'$ (satisfying
$\int_{\R/\Z}\g'(\tau)\,d\tau=0$ because $\g$ is $1$-periodic) implies that
there is a constant $C_2\ge 1$ such that
\begin{equation}\label{eq:W12-est}
\|(\alpha^\epsilon)'-\alpha_2'\|_{W^{1,2}([0,1/4],\R^3)}\le C_2\sqrt{\epsilon}
\quad\Foa 0< \epsilon <\frac1{8(b+1)}.
\end{equation}
To conclude the proof it therefore suffices to prove the uniform convergence of the
$\alpha^\epsilon$ to $\alpha_2$ on $[0,1/4]$ as $\epsilon\to 0$. 
We have for any $t\in[0,\frac14]$
\begin{align*}
 \abs{\alpha^\epsilon(t)-\alpha_2(t)}
 &=\abs{\alpha^\epsilon(0)-\alpha_2(0)} + \abs{\int_{0}^{t}\br{\br{\alpha^\epsilon}'-\alpha_2'}} \\
 &=\rho + \sqrt t\norm[L^{2}]{\br{\alpha^\epsilon}'-\alpha_2'}
 \le \epsilon^{2} + \tfrac12 C_{2}\sqrt\epsilon.
\end{align*}
Taking the supremum over $t\in[0,\frac14]$ concludes the proof.
\end{proof}

\begin{proof}[Proof of Theorem~\ref{thm:mainthm}]
(i)
Corollary~\ref{cor:infimum-trefoil} implies that the torus knot classes $\TL(2,b)$ for odd 
$b\in\Z\setminus\{1,-1\}$ satisfy condition \eqref{eq:infimum-minimal}, so that 
Theorem~\ref{thm:rigidity} is applicable.

(ii) If a $D_{2}$-symmetric elastic knot
for some knot class $\mathcal{K}$
is $\tpc$
then there is a sequence of curves 
$\seqn\gamma\subset \mathscr{C}(\mathcal{K})$
such that $\gamma_{k}\to\tpc[\pi]$ with respect to the
$C^{1}$-norm.
{For all $\gamma_{k}$  sufficiently
close to $\tpc[\pi]$ with respect to the $C^{1}$-norm,
we obtain some cumulative angle $\Delta\beta$
as described in~\cite[Prop.~4.2]{gerlach-etal_2017}
which yields the existence of some odd integer $b$ such that
$\gamma_{k}$ is a $(2,b)$-torus knot if $\abs b\ge3$
and unknotted if $b=\pm1$.
The latter is ruled out by Theorem~\ref{thm:elasym-unknot}.

(iii) This is an immediate consequence {of} Corollary~\ref{cor:infimum-trefoil}.}
\end{proof}

\section{{Discussion and open problems}}\label{sec:discussion}

\subsection{{Higher regularity of $D_2$-symmetric $E_\vth$-critical knots}}\label{sec:smoothness}
 {The Euler--Lagrange operator of $\TP_q$ for $q>2$ studied in \cite{blatt-reiter_2015a}
 seems to be} related
 to the $q$-Laplacian for which
 one cannot expect full regularity, at least in the non-fractional case.
 So it is open whether {the} arclength parametrized $D_2$-symmetric
 critical points of the constrained variational problem~\eqref{eq:P_theta} {obtained
 in Theorem~\ref{thm:symmetric-critical}}   are of class $C^\infty(\R/\Z,\R^3)$. 
 
{Choosing instead of $\TP_q$} the decoupled tangent-point functionals $\TP^{(p,2)}$ {for  $p\in (4,5)$}, cf.\ Footnote~\ref{foot:decoup},
 whose domain is a Hilbert space,
 we can generalize the bootstrapping argument from~\cite{blatt-reiter_2015a} to obtain  $C^{\infty}$-regularity.
 We might even derive analyticity by extending the arguments given
 in~\cite{MR3884791,vorderobermeier,steenebruegge-vorderobermeier}.

\subsection{Non-embeddedness of symmetric elastic knots}\label{sec:double-points}
{Similarly as in \cite[Proposition 3.1]{gerlach-etal_2017} we expect that also symmetric elastic knots for non-trivial knot classes must have
double points.}
According to the stability result of Langer and Singer \cite{langer-singer_1985}, the only stable critical point {of the bending energy $E$} is the once covered circle.
{However, due} to the fact that the symmetry constraint only permits to apply symmetry preserving variations in \eqref{eq:E_b-symm-minimizer}, we cannot immediately apply this tool in the present work.
Consequently, in contrast to {the case of (not necessarily symmetric) elastic knots treated} in~\cite{gerlach-etal_2017}, we {are presently not able to show that}
\begin{itemize}
\item every  $D_2$-elastic knot for a non-trivial
knot class $\mathcal{K} $ must have self-intersection points, {and that}
\item inequality \eqref{eq:E_b-symm-minimizer} is strict unless $\mathcal{K}$ is the unknot class.
\end{itemize}
It is an interesting question whether one may derive a weaker
version {of the stability result} in~\cite{langer-singer_1985}, that is applicable
in our situation,
e.g., stating that the round circle would be the only local minimizer within 
{the $D_2$-symmetric subclass $\Sigma^{1}$ introduced in Definition \ref{def:dihedral}.}
In this case,
one could argue as in~\cite{gerlach-etal_2017}
to conclude that any embedded minimizer of the bending energy
within {the set $\Sigma_{\mathcal K}$ of $D_2$-symmetric knots defined in \eqref{eq:symmetric-set}}
would in fact be a local minimizer within $\Sigma^{1}$. Hence, $\mathcal K$ would be the unknot {contradicting the assumption of a non-trivial knot class $\mathcal{K}$.}  Thus, the {$D_2$-symmetric minimizer of the bending energy could not
be embedded, and the} infimum {of the bending energy could not be attained in $\Sigma_\mathcal{K}$.}

\subsection{Other knot classes and symmetries}\label{sec:otherknots}

In a similar manner as in Definition~\ref{def:symmetric-elastic-knots} we may define \emph{$G$-elastic
knots} for any symmetry group~$G$.
Most results 
{from the Introduction, namely Theorems~\ref{thm:symmetric-critical},
\ref{thm:symmetric-elastic-knots},
and~\ref{thm:elasym-unknot}
as well as Corollary~\ref{cor:symmetric-critical}}, carry over
to  more general symmetry groups
{while Theorem~\ref{thm:mainthm} is restricted to~$D_{2}$}.
We briefly speculate about some 
{examples}
involving other knot classes or symmetry
groups different from $D_{2}$.

\begin{figure}\tiny
\begin{tabular}{c@{\qquad}c@{\qquad}c}
\includegraphics[scale=.5,trim=90 110 90 70,clip]{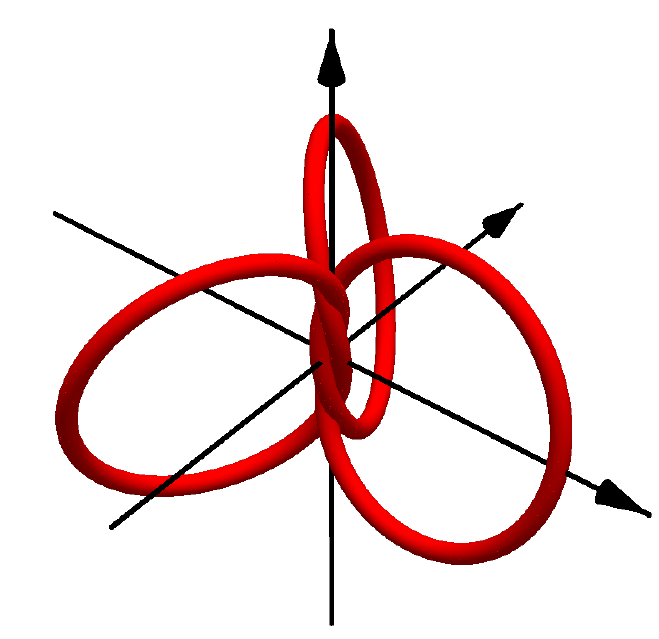}&
\includegraphics[scale=.2,trim=0 20 0 0,clip]{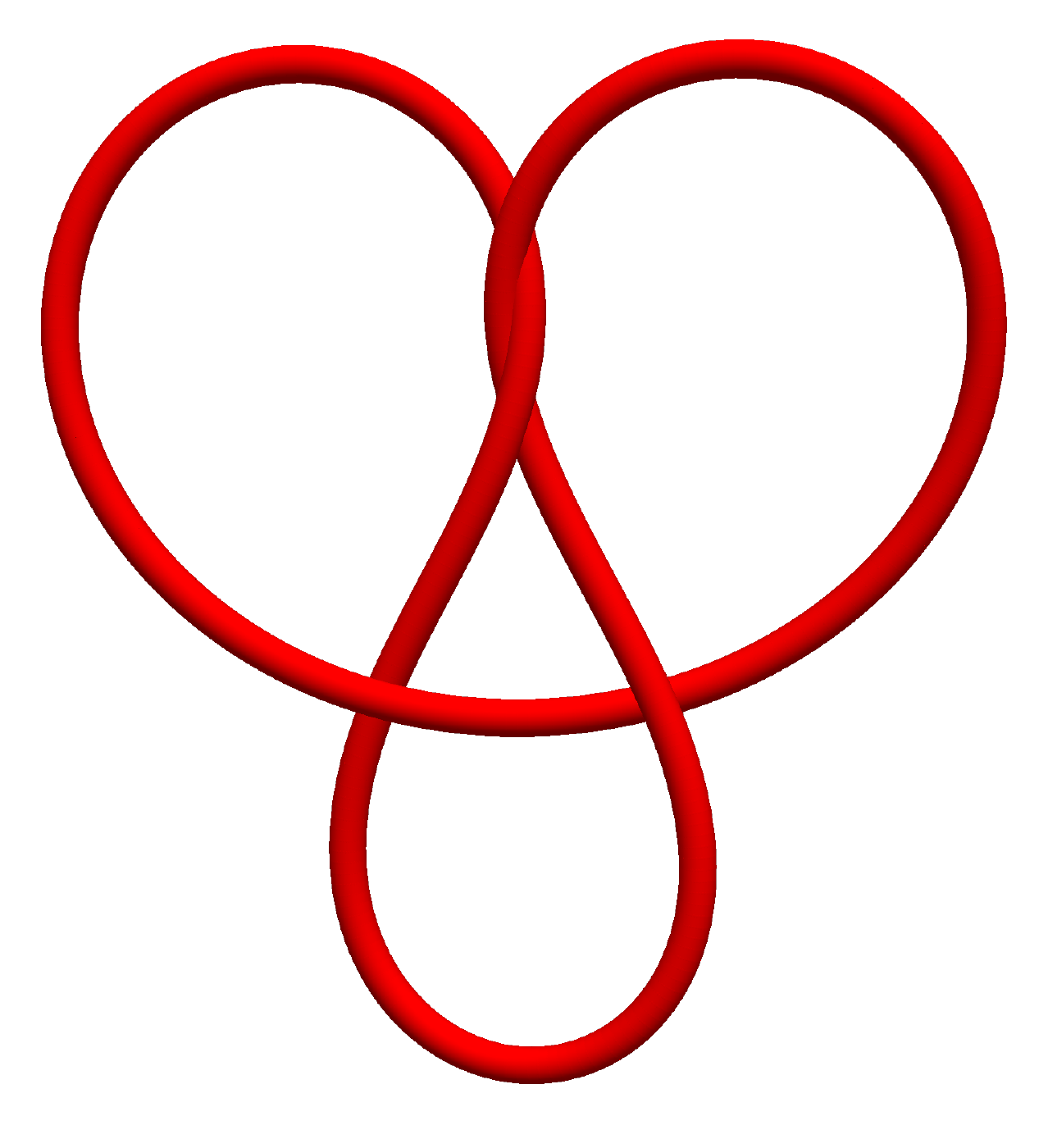}&
\includegraphics[scale=.3,trim=30 50 30 50,clip]{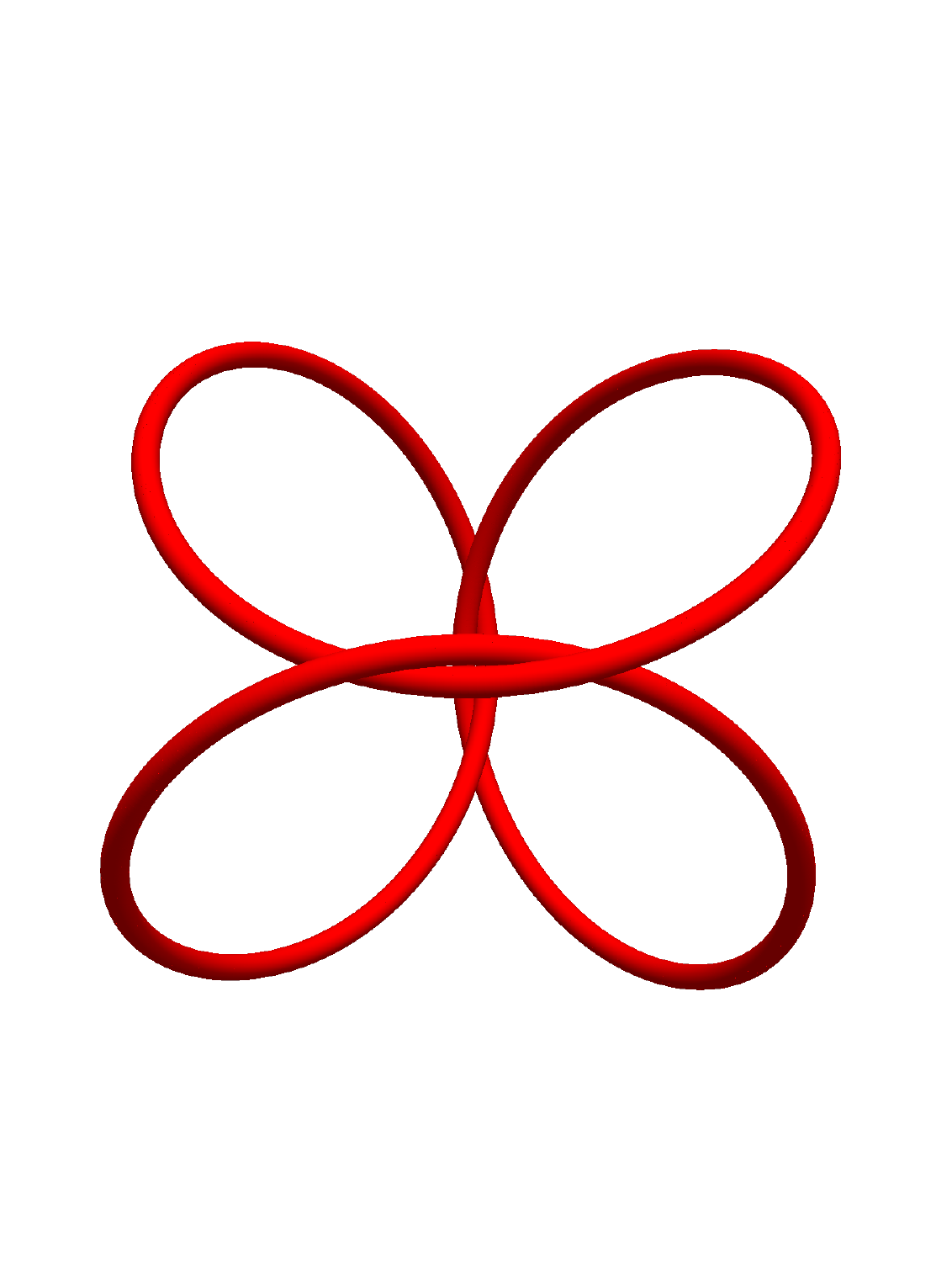} \\
(A)&(B)&(C)
\end{tabular}
\caption{Numerical approximations of candidates for (A)~the $D_{3}$-elastic $(3,4)$-torus knot, (B)~the elastic figure-eight knot,
(C)~the $D_{2}$-elastic figure-eight knot.}
\label{fig:threefoil}
\end{figure}

\subsection*{General torus knots}
Let $a,b\in\N$ be coprime with $2\le a<b$.
We expect the elastic knot to be the $a$-times covered circle~\cite{gerlach-etal_2017}
which then would agree with the $D_{b}$-elastic knot.

Consequently, the $D_{a}$-elastic knot is likely to be the union of
$a$ circles of radius $1/(2\pi a)$ that tangentially meet
in one common point. The angle between two consecutive circles
amounts to $2\pi/a$, {which is also observed experimentally by means of 
 the numerical gradient flow\footnote{An animation is attached as an ancillary file to the present arXiv submission.} of Bartels et al.; see}
Figure~\ref{fig:threefoil}~(A).

\subsection*{The figure-eight knot}
Simulations that have been carried out earlier
suggest that an elastic figure eight ($4_{1}$)
is either planar~\cite{avvakumov-sossinsky_2014}
or spherical~\cite{gallotti-pierre-louis_2007,gerlach-etal_2017};
see Figure~\ref{fig:threefoil}~(B) and~(C).
Recent numerical experiments~\cite{bartels-reiter_2018}
support the {former}.
So the planar configuration~(B) could be a global minimizer
within the figure-eight class
whereas the spherical configuration~(C)
might merely be a local minimizer.

Assuming that this is true and that there are no further candidates,
we may conjecture that the planar configuration is an elastic knot, while the spherical configuration is a $D_{2}$-elastic knot. 
In contrast to the latter, the former does not enjoy a
$D_2$-symmetry.
Using the theory developed above, we can only
state that there exists a $D_{2}$-elastic figure-eight knot
which may or may not coincide with the {elastic} figure-eight.

\addtocontents{toc}{\SkipTocEntry}
 \section*{Acknowledgments}
We are indebted to S\"oren Bartels for
 fruitful discussions on the numerical approximation
 of elastic knots.
 {While working on this paper, the first author was an International Research Fellow of JSPS (Postdoctoral Fellowships for Research in Japan).}
The second author has been partially supported by Grant RE~3930/1--1 of the German Research Foundation (DFG).
The third author's work is partially funded by DFG Grant no.\@ Mo 966/7-1 \emph{Geometric curvature functionals: energy landscape and discrete methods} (project
no. 282535003),
and by the Excellence Initiative of the German federal and state governments.
{A meeting of all three authors in 2020 at RWTH Aachen University was generously funded by {the DFG}-Graduiertenkolleg \emph{Energy, Entropy, and Dissipative Dynamics (EDDy)}},  project no. 320021702/GRK2326.

\addtocontents{toc}{\SkipTocEntry}

\end{document}